\documentclass[a4paper]{amsart}

\usepackage[french, english]{babel} 
\usepackage[T1]{fontenc}
\usepackage[ansinew]{inputenc}

\title[Quadratic differential equations]{Quadratic differential equations : partial Gelfand-Shilov smoothing effect and null-controllability}

\author{Paul Alphonse}
\address{Paul Alphonse, Univ Rennes, CNRS, IRMAR - UMR 6625, F-35000 Rennes}
\email{paul.alphonse@univ-rennes1.fr}

\keywords{Quadratic operators, Gelfand-Shilov regularity, Gevrey regularity, Null-controllability}

\subjclass[2010]{93B05, 35B65}

\usepackage[top=3cm, bottom=2cm, left=3cm, right=3cm]{geometry}		

\frenchbsetup{StandardLists=true}
\usepackage{enumitem}

\usepackage{array}											

\usepackage{amsfonts}
\usepackage{amsmath}
\usepackage{amsthm}
\usepackage{amssymb}

\usepackage{bbm}

\usepackage{stmaryrd}

\usepackage[scr]{rsfso}

\usepackage{comment}

\usepackage{hyperref}

\usepackage{xcolor}

\usepackage{constants}
\newconstantfamily{c}{symbol=c}

\numberwithin{equation}{section}

\newtheorem{thm}{Theorem}[section]
\newtheorem{prop}[thm]{Proposition}
\newtheorem{lem}[thm]{Lemma}
\newtheorem{cor}[thm]{Corollary}
\theoremstyle{definition}
\newtheorem{dfn}[thm]{Definition}
\newtheorem{ex}[thm]{Example}

\DeclareMathOperator{\Supp}{Supp}
\DeclareMathOperator{\Reelle}{Re}
\DeclareMathOperator{\Op}{Op}
\DeclareMathOperator{\Tr}{Tr}
\DeclareMathOperator{\Ker}{Ker}
\DeclareMathOperator{\Imag}{Im}

\DeclareMathOperator{\Span}{Span}
\DeclareMathOperator{\Rank}{Rank}
\DeclareMathOperator{\GL}{GL}
\DeclareMathOperator{\Diag}{Diag}
\DeclareMathOperator{\Log}{Log}
\DeclareMathOperator{\Ran}{Ran}

\begin{document}

\sloppy

\selectlanguage{english}

\begin{abstract}
We study the partial Gelfand-Shilov regularizing effect and the exponential decay for the solutions to evolution equations associated to a class of accretive non-selfadjoint quadratic operators, which fail to be globally hypoelliptic on the whole phase space. By taking advantage of the associated Gevrey regularizing effects, we study the null-controllability of parabolic equations posed on the whole Euclidean space associated to this class of possibly non-globally hypoelliptic quadratic operators. We prove that these parabolic equations are null-controllable in any positive time from thick control subsets. This thickness property is known to be a necessary and sufficient condition for the null-controllability of the heat equation posed on the whole Euclidean space. Our result shows that this geometric condition turns out to be a sufficient one for the null-controllability of a large class of quadratic differential operators.
\end{abstract}

\maketitle

\section{Introduction}
\label{intro}

\subsection{Miscellaneous facts about quadratic operators}We study in this work quadratic operators, that is the pseudodifferential operators
\begin{equation}
	q^w(x,D_x)u(x) = \frac1{(2\pi)^n}\int_{\mathbb R^{2n}}e^{i(x-y)\cdot\xi}q\bigg(\frac{x+y}2,\xi\bigg)u(y)dyd\xi,
\end{equation}
defined by the Weyl quantization of complex-valued quadratic symbols
$$q:\mathbb R^n_x\times\mathbb R^n_{\xi}\rightarrow\mathbb C,$$
on the phase space $\mathbb R^n_x\times\mathbb R^n_{\xi}$, with $n\geq1$. These non-selfadjoint operators are only differential operators since the Weyl quantization of the quadratic symbols $x^{\alpha}\xi^{\beta}$, with $(\alpha,\beta)\in\mathbb N^{2n}$, $\vert\alpha+\beta\vert = 2$, is given by
\begin{equation}
	(x^{\alpha}\xi^{\beta})^w = \Op^w(x^{\alpha} \xi^{\beta}) = \frac12\big(x^{\alpha} D^{\beta}_x+D^{\beta}_xx^{\alpha}\big),
\end{equation}
with $D_x = i^{-1}\partial_x$. It is known from \cite{MR1339714} (pp. 425-426) that the maximal closed realization of a quadratic operator $q^w(x,D_x)$ on $L^2(\mathbb R^n)$, that is the operator equipped with the domain
\begin{equation}\label{04122017E3}
	D(q^w) = \big\{u\in L^2(\mathbb R^n) : q^w(x,D_x)u\in L^2(\mathbb R^n)\big\},
\end{equation}
where $q^w(x,D_x)u$ is defined in the distribution sense, coincides with the graph closure of its restriction to the Schwartz space
$$q^w(x,D_x):\mathscr S(\mathbb R^n)\rightarrow\mathscr S(\mathbb R^n).$$ 

Classically, to any quadratic form defined on the phase space $q:\mathbb R^n_x\times\mathbb R^n_{\xi}\rightarrow\mathbb C$ is associated a matrix $F\in M_{2n}(\mathbb C)$ called its Hamilton map, or its fundamental matrix, which is defined as the unique matrix satisfying the identity
\begin{equation}\label{04122017E4}
	\forall X,Y\in\mathbb R^{2n},\quad q(X,Y) = \sigma(X,FY),
\end{equation}
with $q(\cdot,\cdot)$ the polarized form associated to the quadratic form $q$, and $\sigma$ the standard symplectic form given by
\begin{equation}\label{04122017E5}
	\sigma((x,\xi),(y,\eta)) = \langle\xi,y\rangle -\langle x,\eta\rangle,\quad (x,y),(\xi,\eta)\in\mathbb C^{2n},
\end{equation}
and where $\langle\cdot,\cdot\rangle$ denotes the inner product on $\mathbb C^n$ defined by
$$\langle x,y\rangle = \sum_{j=0}^nx_jy_j,\quad x =(x_1,\ldots,x_n),\ y=(y_1,\ldots,y_n)\in\mathbb C^n.$$
Note that $\langle\cdot,\cdot\rangle$ is linear in both variables but not sesquilinear. By definition, $F$ is given by
\begin{equation}\label{21062018E6}
	F = JQ,
\end{equation}
where $Q\in S_{2n}(\mathbb C)$ is the symmetric matrix associated to the bilinear form $q(\cdot,\cdot)$,
\begin{equation}\label{29062018E3}
	\forall X,Y\in\mathbb R^{2n},\quad q(X,Y) = \langle X,QY\rangle,
\end{equation}
and $J\in\GL_{2n}(\mathbb R)$ stands for the symplectic matrix
$$J = \begin{pmatrix}
	0_n & I_n \\
	-I_n & 0_n
\end{pmatrix}\in\GL_{2n}(\mathbb R),$$
with $0_n\in M_n(\mathbb R)$ the null matrix and $I_n\in M_n(\mathbb R)$ the identity matrix. We notice that a Hamilton map is always skew-symmetric with respect to the symplectic form, since
\begin{equation}\label{04122017E6}
	\forall X,Y\in\mathbb R^{2n},\quad \sigma(X,FY) = q(X,Y) = q(Y,X) = \sigma(Y,FX) = -\sigma(FX,Y),
\end{equation}
by symmetry of the polarized form and skew-symmetry of $\sigma$.

When the real part of the symbol is non-negative $\Reelle q\geq0$, the quadratic operator $q^w(x,D_x)$ equipped with the domain \eqref{04122017E3} is shown in \cite{MR1339714} (pp. 425-426) to be maximal accretive and to generate a strongly continuous contraction semigroup $(e^{-tq^w})_{t\geq0}$ on $L^2(\mathbb R^n)$. 
Moreover, for all $t\geq0$, $e^{-tq^w}$ is a pseudodifferential operator whose Weyl symbol is a tempered distribution $p_t\in\mathscr S'(\mathbb R^{2n})$. More specifically, this symbol is a $L^{\infty}(\mathbb R^{2n})$ function explicitly given by the Mehler formula
\begin{equation}\label{04122017E8}
	p_t(X) = \frac1{\sqrt{\det(\cos(tF))}}e^{-\sigma(X,\tan(tF)X)}\in L^{\infty}(\mathbb R^{2n}),\quad X\in\mathbb R^{2n},
\end{equation}
whenever the condition $\det(\cos(tF))\ne 0$ is satisfied, see \cite{MR1339714} (Theorem 4.2), with $F$ the Hamilton map of $q$. For example, the Schr\"odinger operator $i(D^2_x+x^2)$ generates a group $(e^{-it(D^2_x+x^2)})_{t\in\mathbb R}$ whose elements are pseudodifferential operators, and their Weyl symbols are respectively given by
$$(x,\xi)\mapsto \frac1{\cos t}e^{-i(\xi^2+x^2)\tan t}\in L^{\infty}(\mathbb R^{2n}),$$
when $\cos t\ne 0$, whereas when $t=\frac{\pi}{2} + k\pi$, with $k\in\mathbb Z$, it is given by the Dirac mass
$$(x,\xi)\mapsto i(-1)^{k+1}\pi\delta_0(x,\xi)\in\mathscr S'(\mathbb R^{2n}).$$
This example is taken from \cite{MR1339714} (p. 427) and shows that the condition $\det(\cos(tF))\ne 0$ is not always satisfied for any $t\geq0$.

The notion of singular space associated to any complex-valued quadratic form $q:\mathbb R^n_x\times\mathbb R^n_{\xi}\rightarrow\mathbb C$ defined on the phase space, introduced in \cite{MR2507625} (formula (1.1.15)) by M. Hitrik and K. Pravda-Starov, is defined as the following finite intersection of kernels
\begin{equation}\label{04122017E7}
	S = \bigcap_{j=0}^{2n-1}\Ker(\Reelle F(\Imag F)^j)\cap\mathbb R^{2n},
\end{equation}
where $\Reelle F$ and $\Imag F$ stand respectively for the real and imaginary parts of the Hamilton map $F$ associated to the quadratic symbol $q$,
$$\Reelle F = \frac12(F+\overline F)\quad \text{and}\quad \Imag F = \frac1{2i}(F-\overline F).$$
According to \eqref{04122017E7}, we may consider $0\le k_0\le 2n-1$ the smallest integer satisfying 
\begin{equation}\label{22062018E1}
	S = \bigcap_{j=0}^{k_0}\Ker(\Reelle F(\Imag F)^j)\cap\mathbb R^{2n}.
\end{equation}
When the quadratic symbol has a non-negative real part $\Reelle q\geq0$, the singular space can be defined in an equivalent way as the subspace in the phase space where all the Poisson brackets
$$H_{\Imag q}^k\Reelle q = \left[\frac{\partial\Imag q}{\partial\xi}\cdot\frac{\partial}{\partial x}-\frac{\partial\Imag q}{\partial x}\cdot\frac{\partial}{\partial \xi}\right]^k\Reelle q,\quad k\geq0,$$
are vanishing
$$S = \big\{X\in\mathbb R^{2n} : (H^k_{\Imag q}\Reelle q)(X) = 0,\ k\geq0\big\}.$$
This dynamical definition shows that the singular space corresponds exactly to the set of points $X\in\mathbb R^{2n}$, where the real part of the symbol $\Reelle q$ under the flow of the Hamilton vector $H_{\Imag q}$ associated with its imaginary part 
\begin{equation}\label{28062018E1}
	t\mapsto\Reelle q(e^{tH_{\Imag q}}X),
\end{equation}
vanishes to any order at $t=0$. This is also equivalent to the fact that the function \eqref{28062018E1} is identically zero on $\mathbb R$. 

As pointed out in \cite{MR2507625, MR3342487, MR2752935, MR3244980}, the singular space is playing a basic role in understanding the spectral and hypoelliptic properties of non-elliptic quadratic operators, as well as the spectral and pseudospectral properties of certain classes of degenerate doubly characteristic pseudodifferential operators \cite{MR2753626, MR3137478}. For example, when the singular space of $q$ is equal to zero $S = \{0\}$, the quadratic operator $q^w(x,D_x)$ is shown in \cite{MR2752935} (Theorem 1.2.1) to be hypoelliptic and to enjoy global subelliptic estimates of the type
$$\exists C>0, \forall u\in\mathscr S(\mathbb R^n),\quad \big\Vert\langle(x,D_x)\rangle^{\frac2{2k_0+1}}u\big\Vert_{L^2(\mathbb R^n)}
\le C\big[\Vert q^w(x,D_x)u\Vert_{L^2(\mathbb R^n)} + \Vert u\Vert_{L^2(\mathbb R^n)}\big],$$
where $$\langle(x,D_x)\rangle^2 = 1+\vert x\vert^2+\vert D_x\vert^2,$$ and $0\le k_0\le 2n-1$ is the smallest integer such that \eqref{22062018E1} holds.

The notion of singular space also allows to understand the propagation of Gabor singularities for solutions to parabolic equations associated to accretive quadratic operators. The Gabor wave front set (or Gabor singularities) $WF(u)$ of a tempered distribution $u$ measures the directions in the phase space in which a tempered distribution does not behave like a Schwartz function. We refer the reader e.g. to \cite{MR3880300} (Section 5) for the definition and the basic properties of the Gabor wave front set. We only recall here that the Gabor wave front set of a tempered distribution is empty if and only if this distribution is a Schwartz function:
$$\forall u\in\mathscr S'(\mathbb R^n),\quad WF(u) = \emptyset \Leftrightarrow u\in\mathscr S(\mathbb R^n).$$
The following microlocal inclusion is proven in \cite{MR3756858} (Theorem 6.2):
\begin{equation}\label{25042018E14}
	\forall u\in L^2(\mathbb R^n),\forall t>0,\quad WF(e^{-tq^w}u)\subset e^{tH_{\Imag q}}(WF(u)\cap S)\subset S,
\end{equation}
where $(e^{tH_{\Imag q}})_{t\in\mathbb R}$ is the flow generated  by the Hamilton vector field 
$$H_{\Imag q} = \frac{\partial\Imag q}{\partial\xi}\cdot\frac{\partial}{\partial x}-\frac{\partial\Imag q}{\partial x}\cdot\frac{\partial}{\partial \xi}.$$
This result shows that the singular space $S$ contains all the directions in the phase space in which the semigroup $(e^{-tq^w})_{t\geq0}$ does not regularize in the Schwartz space $\mathscr S(\mathbb R^n)$. The microlocal inclusion \eqref{25042018E14} was shown to hold as well for other types of wave front sets, as Gelfand-Shilov wave front sets \cite{MR3649471}, or polynomial phase space wave front sets \cite{MR3767155}. 

\subsection{Smoothing properties of semigroups generated by accretive quadratic operators} Given $q:\mathbb R^n_x\times\mathbb R^n_{\xi}\rightarrow\mathbb C$ a complex-valued quadratic form with a non-negative real part $\Reelle q\geq0$, we study in the first part of this work the smoothing effects of the semigroup $(e^{-tq^w})_{t\geq0}$ generated by the quadratic operator $q^w(x,D_x)$ associated to $q$.

When the singular space of $q$ is equal to zero, 
\begin{equation}\label{28062018E5}
	S = \{0\},
\end{equation}
the microlocal inclusion \eqref{25042018E14} implies that the semigroup $(e^{-tq^w})_{t\geq0}$ is smoothing in the Schwartz space $\mathscr S(\mathbb R^n)$, 
$$\forall u\in L^2(\mathbb R^n), \forall t>0,\quad e^{-tq^w}u\in\mathscr S(\mathbb R^n).$$
However, this result does not provide any control of the Schwartz seminorms for small times and does not describe how they blow up as time tends to zero. In the work \cite{MR3841852}, this regularizing property was sharpened and under the assumption \eqref{28062018E5}, the semigroup $(e^{-tq^w})_{t\geq0}$ was shown to be actually smoothing for any positive time in the Gelfand-Shilov space $S^{1/2}_{1/2}(\mathbb R^n)$ and some asymptotics for the associated seminorms are given for small times $0<t\ll1$. We refer the reader to Subsection \ref{GSreg} in Appendix where the Gelfand-Shilov spaces $S^{\mu}_{\nu}(\mathbb R^n)$, with $\mu+\nu\geq1$, are defined. More precisely, \cite{MR3841852} (Proposition 4.1) states that when \eqref{28062018E5} holds, there exist some positive constants $t_0>0$ and $C_0>0$ such that for all $0\le t\le t_0$ and $u\in L^2(\mathbb R^n)$,
$$\big\Vert e^{\frac{t^{2k_0+1}}{C_0}(D_x^2+x^2)}e^{-tq^w}u\big\Vert_{L^2(\mathbb R^n)}\le C_0 \Vert u\Vert_{L^2(\mathbb R^n)},$$
where $0\le k_0\le 2n-1$ is the smallest integer such that \eqref{22062018E1} holds. From the work \cite{MR3841852} (Estimate (4.19)), this implies that there exists a positive constant $C>1$ such that for all $0<t\le t_0$, $(\alpha,\beta)\in\mathbb N^{2n}$ and $u\in L^2(\mathbb R^n)$,
\begin{equation}\label{05122017E2}
	\big\Vert x^{\alpha}\partial^{\beta}_x(e^{-tq^w}u)\big\Vert_{L^2(\mathbb R^n)}\le \frac{C^{1+\vert\alpha\vert + \vert\beta\vert}}{t^{\frac{2k_0+1}2(\vert\alpha\vert + \vert\beta\vert+2n)}}\ (\alpha!)^{\frac12}\ (\beta!)^{\frac12}\ \Vert u\Vert_{L^2(\mathbb R^n)}.
\end{equation}
By using the Sobolev embedding theorem, we notice that this result provides the existence of a positive constant $C>1$ such that for all $0<t\le t_0$, $(\alpha,\beta)\in\mathbb N^{2n}$ and $u\in L^2(\mathbb R^n)$,
$$\big\Vert x^{\alpha}\partial^{\beta}_x(e^{-tq^w}u)\big\Vert_{L^{\infty}(\mathbb R^n)}\le \frac{C^{1+\vert\alpha\vert + \vert\beta\vert}}{t^{\frac{2k_0+1}2(\vert\alpha\vert + \vert\beta\vert + 2n+s)}}\ (\alpha!)^{\frac12}\ (\beta!)^{\frac12}\ \Vert u\Vert_{L^2(\mathbb R^n)},$$
where $s>n/2$ is a fixed integer, see \cite{MR3841852} (Theorem 1.2).

More generally, when the singular space $S$ of $q$ is possibly non-zero but still has a symplectic structure, that is, when the restriction of the canonical symplectic form to the singular space $\sigma_{\vert S}$ is non-degenerate, the above result \eqref{05122017E2} can be easily extended but only when differentiating the semigroup in the directions of the phase space given by the symplectic orthogonal complement of the singular space 
$$S^{\sigma\perp} = \big\{X\in\mathbb R^{2n} : \forall Y\in S,\quad \sigma(X,Y) = 0\big\}.$$
Indeed, when the singular space $S$ has a symplectic structure, it is proven in \cite{MR3710672} (Subsection 2.5) that the quadratic form $q$ writes as $q = q_1 + q_2$ with $q_1$ a purely imaginary-valued quadratic form defined on $S$ and $q_2$ another one defined on $S^{\sigma\perp}$ with a non-negative real part and a zero singular space. The symplectic structures of $S$ and $S^{\sigma\perp}$ imply that the operators $q^w_1(x,D_x)$ and $q^w_2(x,D_x)$ do commute as well as their associated semigroups
$$\forall t>0,\quad e^{-tq^w} = e^{-tq^w_1}e^{-tq^w_2} = e^{-tq^w_2}e^{-tq^w_1}.$$
Moreover, since $\Reelle q_1=0$, $(e^{-tq^w_1})_{t\geq0}$ is a contraction semigroup on $L^2(\mathbb R^n)$ and the partial smoothing properties of the semigroup $(e^{-tq^w})_{t\geq0}$ can be deduced from a symplectic change of variables and the result known for zero singular space can be applied to the semigroup $(e^{-tq^w_2})_{t\geq0}$. We refer the reader to \cite{MR3710672} (Subsection 2.5) for more details about the reduction by tensorization of the non-zero symplectic case to the case when the singular space is zero.

\begin{ex}\label{ex5} We consider the Kramers-Fokker-Planck operator acting on $L^2(\mathbb R^{2n}_{x,v})$,
\begin{equation}\label{13122017E1}
	K = -\Delta_v + \frac14\vert v\vert^2 +\langle v,\nabla_x\rangle - \langle\nabla_x V(x),\nabla_v\rangle,\quad (x,v)\in\mathbb R^{2n},
\end{equation}
with a quadratic external potential
\begin{equation}\label{08052018E1}
	V(x) = \frac12a\vert x\vert^2,\quad a\in\mathbb R,
\end{equation}
where $\vert\cdot\vert$ denotes the Euclidean norm on $\mathbb R^n$. This operator writes as $K = q^w(x,v,D_x,D_v)$, where 
\begin{equation}\label{29062018E5}
	q(x,v,\xi,\eta) = \vert\eta\vert^2 + \frac14\vert v\vert^2 + i\left(\langle v,\xi\rangle - a\langle x,\eta\rangle\right),\quad (x,v,\xi,\eta)\in\mathbb R^{4n},
\end{equation}
is a non-elliptic complex-valued quadratic form with a non-negative real part, whose Hamilton map is given by
\begin{equation}\label{29062018E7}
F = \frac12\begin{pmatrix}
	0_n & iI_n & 0_n & 0_n \\
	-aiI_n & 0_n & 0_n & 2I_n \\
	0_n & 0_n & 0_n & aiI_n \\
	0_n & -\frac12I_n & -iI_n & 0_n
\end{pmatrix}.
\end{equation}
When $a\ne0$, a simple algebraic computation shows that its singular space is 
$$S = \Ker(\Reelle F)\cap\Ker(\Reelle F(\Imag F))\cap\mathbb R^{4n} = \{0\}.$$
Therefore, the integer $0\le k_0\le 4n-1$ defined in \eqref{22062018E1} is equal to $1$ and it follows from \eqref{05122017E2} that there exist some positive constants $t_0>0$ and $C>0$ such that for all $0<t\le t_0$, $(\alpha,\beta,\gamma,\delta)\in\mathbb N^{4n}$ and $u\in L^2(\mathbb R^{2n})$,
$$\big\Vert x^{\alpha}v^{\beta}\partial^{\gamma}_x\partial^{\delta}_v(e^{-tK}u)\big\Vert_{L^2(\mathbb R^{2n})}\le \frac{C^{1+\vert\alpha\vert + \vert\beta\vert+\vert\gamma\vert+\vert\delta\vert}}{t^{\frac32(\vert\alpha\vert + \vert\beta\vert+\vert\gamma\vert+\vert\delta\vert)}}\ (\alpha!)^{\frac12}\ (\beta!)^{\frac12}\ (\gamma!)^{\frac12}\ (\delta!)^{\frac12}\ \Vert u\Vert_{L^2(\mathbb R^{2n})}.$$
When $a=0$, the singular space of $q$ is
\begin{equation}\label{11122017E4}
	S = \Ker(\Reelle F)\cap\Ker\left(\Reelle F(\Imag F)\right)\cap\mathbb R^{4n} = \mathbb R^n_x\times\{0_{\mathbb R^n_v}\}\times\{0_{\mathbb R^n_{\xi}}\}\times\{0_{\mathbb R^n_{\eta}}\},
\end{equation}
and the integer $0\le k_0\le 4n-1$ defined in \eqref{22062018E1} is also equal to $1$. In particular, when $a=0$, the singular space $S$ of $q$ has not a symplectic structure since its symplectic orthogonal complement is given by 
$$S^{\sigma\perp} = \mathbb R^n_x\times\mathbb R^n_v\times\{0_{\mathbb R^n_{\xi}}\}\times\mathbb R^n_{\eta},$$ 
and no general theory nor known regularizing results apply for the Kramers-Fokker-Planck semigroup $(e^{-tK})_{t\geq0}$.
\end{ex}

In the present work, we consider quadratic operators $q^w(x,D_x)$ whose symbols are complex-valued quadratic forms $q:\mathbb R^n_x\times\mathbb R^n_{\xi}\rightarrow\mathbb C$ with a non-negative real part $\Reelle q\geq0$ and a singular space $S$ spanned by elements of the canonical basis of $\mathbb R^{2n}$ which can also possibly fail to be symplectic, as the Kramers-Fokker-Planck operator without external potential (case $a=0$). In these degenerate cases when $S\ne\{0\}$, with $S$ possibly non-symplectic, we cannot expect that the semigroup $(e^{-tq^w})_{t\geq0}$ enjoys Gelfand-Shilov smoothing properties in all variables as in \eqref{05122017E2} and we aim in the first part of this work at studying in which specific directions of the phase space the semigroup does enjoy partial Gelfand-Shilov smoothing properties. 

To describe the regularizing effects of the semigroup $(e^{-tq^w})_{t\geq0}$ when the singular space $S$ is spanned by elements of the canonical basis of $\mathbb R^{2n}$, we need to introduce the following notation:

\begin{dfn}\label{1} Let $n\geq1$ be a positive integer, $J$ be a subset of $\{1,\ldots,n\}$ and $E$ be a subset of $\mathbb C$. We define $E^n_J$ as the subset of $E^n$ whose elements $x\in E^n_J$ satisfy 
$$\forall j\notin J,\quad x_j = 0.$$
By convention, we set $E^n_J=\{0\}$ when $J$ is empty.
\end{dfn}

The main result of this article is the following:

\begin{thm}\label{20112017T1} Let $q:\mathbb R^n_x\times\mathbb R^n_{\xi}\rightarrow\mathbb C$ be a complex-valued quadratic form with a non-negative real part $\Reelle q\geq0$. We assume that there exist some subsets $I,J\subset\{1,\ldots,n\}$ such that the singular space $S$ of $q$ satisfies $S^{\perp} = \mathbb R^n_I\times\mathbb R^n_J$, the orthogonality being taken with respect to the canonical Euclidean structure of $\mathbb R^{2n}$. We also assume that the inclusion $S\subset\Ker(\Imag F)$ holds, where $F$ denotes the Hamilton map of $q$. Then, there exist some positive constants $C>1$ and $0<t_0<1$ such that for all $(\alpha,\beta)\in\mathbb N^n_I\times\mathbb N^n_J$, $0<t\le t_0$ and $u\in L^2(\mathbb R^n)$,
$$\big\Vert x^{\alpha}\partial^{\beta}_x(e^{-tq^w}u)\big\Vert_{L^2(\mathbb R^n)}
\le\frac{C^{1+\vert\alpha\vert+\vert\beta\vert}}{t^{(2k_0+1)(\vert\alpha\vert + \vert\beta\vert + s)}}\ (\alpha!)^{\frac12}\ (\beta!)^{\frac12}\ \Vert u\Vert_{L^2(\mathbb R^n)},$$
where $0\le k_0\le 2n-1$ is the smallest integer such that \eqref{22062018E1} holds and $s = 9n/4+2\lfloor n/2\rfloor + 3$.
\end{thm}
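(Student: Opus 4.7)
The strategy is to establish a \emph{partial} Gelfand--Shilov smoothing estimate of the form
\begin{equation*}
\bigl\|e^{c\, t^\kappa \mathcal H_{I,J}}\, e^{-tq^w} u\bigr\|_{L^2(\mathbb R^n)} \le C \|u\|_{L^2(\mathbb R^n)}, \qquad 0 < t \le t_0,
\end{equation*}
where $\mathcal H_{I,J} = \sum_{i \in I} x_i^2 + \sum_{j\in J} D_{x_j}^2$ is the partial harmonic oscillator in the non-singular directions of phase space, and $\kappa = 2(2k_0+1)$ is chosen to match the announced time exponent in the final estimate. Such an inequality is modelled on the full Gelfand--Shilov estimate of \cite{MR3841852}, where $D_x^2 + x^2$ plays the role of $\mathcal H_{I,J}$. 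Given this partial smoothing estimate, the announced bound on $\|x^\alpha\partial_x^\beta(e^{-tq^w}u)\|_{L^2}$ follows by the standard Hermite-type expansion in the $(I,J)$-variables: for a Hermite function $\psi_N$ concentrated on $I \cup J$, one has $\|x^\alpha\partial_x^\beta \psi_N\|_{L^2} \le C^{|\alpha|+|\beta|} N^{(|\alpha|+|\beta|)/2}$ when $\alpha \in \mathbb N^n_I$, $\beta \in \mathbb N^n_J$, and optimising over $N$ against the exponential weight via Stirling's formula produces the $(\alpha!\,\beta!)^{1/2}$ and $t^{-(2k_0+1)(|\alpha|+|\beta|)}$ factors. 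The dimensional constant $s = 9n/4 + 2\lfloor n/2\rfloor + 3$ then appears via a Sobolev embedding used to absorb the directions lying outside $I\cup J$.

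The two hypotheses of the theorem play complementary structural roles. The assumption $S^\perp = \mathbb R^n_I\times\mathbb R^n_J$ gives $S = \mathbb R^n_{I^c}\times\mathbb R^n_{J^c}$, so that the partial harmonic oscillator $\mathcal H_{I,J}$ is exactly adapted to the non-singular directions and its Weyl symbol is a Gaussian supported along $S^\perp$. The assumption $S\subset\Ker(\Imag F)$, combined with the always valid inclusion $S\subset\Ker(\Reelle F)$, forces $S\subset\Ker F$, so that $\Imag q$ vanishes on $S$ and the Hamilton flow $e^{tH_{\Imag q}}$ fixes every point of $S$. This serves as a surrogate for the symplectic hypothesis used in \cite{MR3710672}: when $S$ is not symplectic one cannot tensorize the semigroup $(e^{-tq^w})_{t\ge 0}$, but triviality of $\Imag F$ on $S$ prevents the imaginary part from mixing the singular and non-singular directions in a way that would spoil the Gaussian damping on $S^\perp$.

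The technical heart of the argument is a Weyl-symbol computation based on Mehler's formula \eqref{04122017E8}. The symbol of $e^{-tq^w}$ is the Gaussian $(\det\cos(tF))^{-1/2}\exp(-\sigma(X,\tan(tF)X))$. Expanding $\tan(tF)$ in powers of $t$ and using the definition \eqref{22062018E1} of $k_0$, one checks that the real part of $-\sigma(\cdot,\tan(tF)\cdot)$ is a positive semidefinite quadratic form of order $t^{2k_0+1}$ along $S^\perp = \mathbb R^n_I\times\mathbb R^n_J$; meanwhile $S\subset\Ker F$ ensures that $\cos(tF)|_S = I$ and $\tan(tF)|_S = 0$, so the Mehler Gaussian is trivial in the singular directions. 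Composing with the Gaussian symbol of $e^{c\, t^\kappa \mathcal H_{I,J}}$ and completing the square then produces a symbol lying in a bounded symbol class, say $S(1,g)$ for an appropriate admissible metric, uniformly for small $t>0$. The Calder\'on--Vaillancourt theorem then yields the desired $L^2$-bound on the composition.

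The main obstacle is precisely the non-symplectic character of $S$: the imaginary part of $q$ may couple variables indexed by $I$ with those indexed by $I^c$, and likewise on the $\xi$-side, so the bilinear cross terms in the Mehler exponent are the delicate object to control. The hypothesis $S\subset\Ker(\Imag F)$ is used precisely to ensure that these cross terms vanish to sufficient order in $t$ and can be absorbed by the dominant Gaussian damping along $S^\perp$. Once this balance is established and the partial Gelfand--Shilov estimate is obtained, the conclusion follows by the Hermite expansion outlined above, with the extra integer $s$ absorbing the loss coming from Sobolev embedding.
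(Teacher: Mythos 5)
Your strategy — first proving a partial Gelfand--Shilov smoothing estimate of the form
\begin{equation*}
\bigl\|e^{c\, t^\kappa \mathcal H_{I,J}}\, e^{-tq^w} u\bigr\|_{L^2(\mathbb R^n)} \le C \|u\|_{L^2(\mathbb R^n)},
\end{equation*}
then extracting the factorial bounds by expansion — is a priori reasonable, and your description of the structural role of the two hypotheses (coercivity of $\Reelle\bigl[-\sigma(\cdot,\tan(tF)\cdot)\bigr]$ of order $t^{2k_0+1}$ on $S^{\perp}$, and triviality of $\tan(tF)$ on $S$) coincides with what the paper establishes in Corollary~\ref{17042018C1} and Lemma~\ref{24042018L2}. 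However, the argument has two real gaps. First, the exponential estimate itself is asserted but never established: the claim that Moyal-composing the growing Gaussian symbol of $e^{ct^\kappa\mathcal H_{I,J}}$ with the Mehler Gaussian and ``completing the square'' yields a uniformly bounded symbol is precisely the technical content that must be verified, and it requires a sign analysis of a $4n\times4n$ quadratic form that is far from automatic when $I\ne J$, because the Mehler Gaussian then has anisotropic decay rates ($t^k$ in $x_I$ versus $t^{2k}$ in $\xi_J$, as seen in Lemma~\ref{19042018L1}) that do not match the isotropic weight produced by $\mathcal H_{I,J}$. Second, the deduction of the $x^\alpha\partial_x^\beta$ bounds from the exponential estimate is attributed to a ``Hermite-type expansion,'' but the operator $\mathcal H_{I,J}=\sum_{i\in I}x_i^2+\sum_{j\in J}D_{x_j}^2$ is only a genuine harmonic oscillator in the variables $i\in I\cap J$; for indices in $I\setminus J$ it is pure multiplication and for $J\setminus I$ it is $-\partial^2$, which has continuous spectrum, so there is no Hermite basis to expand in. A correct extraction of the factorial estimates must treat the three families of indices separately, which is essentially re-doing the symbol estimate. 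Relatedly, the dimensional constant $s=9n/4+2\lfloor n/2\rfloor+3$ is not incurred by ``absorbing directions outside $I\cup J$ via Sobolev embedding''; in the paper it arises from the $\lfloor n/2\rfloor+1$ derivatives required by the Calder\'on--Vaillancourt theorem together with the $L^{\infty}$ bounds of Lemma~\ref{19042018L1}.

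For comparison, the paper bypasses the exponential weight entirely: it computes the Moyal product $x^\alpha\,\sharp\,\xi^\beta\,\sharp\,e^{-q_t}$ explicitly as a finite sum (Lemma~\ref{20092017L1}), bounds each term using the Fourier-side Gaussian estimate of Lemma~\ref{18042018L2} and the combinatorial inequalities of Subsection~\ref{GSreg}, and then applies the Calder\'on--Vaillancourt theorem (Section~\ref{regeffects}). The condition $S\subset\Ker(\Imag F)$ enters through Lemma~\ref{24042018L2}, which guarantees that the full Mehler symbol $q_t$ (not merely its real part) factors through the projection onto $\mathbb R^n_I\times\mathbb R^n_J$, so that the whole calculus can be confined to those variables. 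If you want to salvage your approach you would have to actually carry out the Gaussian composition uniformly in $t$ and then replace the Hermite expansion by a per-variable argument (discrete spectrum for $I\cap J$, Fourier transform for $J\setminus I$, $x$-space decay for $I\setminus J$); at that point the amount of work is comparable to the direct symbol estimate.
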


In all this work, $\lfloor\cdot\rfloor$ stands for the floor function. Moreover, we denote
$$\vert\alpha\vert = \sum_{i\in I}\alpha_i,\quad \vert\beta\vert = \sum_{j\in J}\beta_j,\quad \alpha! = \prod_{i\in I}\alpha_i!,\quad\beta! = \prod_{j\in J}\beta_j!,$$
for all $I,J\subset\{1,\ldots,n\}$ and $(\alpha,\beta)\in\mathbb N^n_I\times\mathbb N^n_J$, with the convention that a sum taken over the empty set is equal to $0$, and a product taken over the empty set is equal to $1$.

Theorem \ref{20112017T1} shows that once the singular space $S$ of $q$ is spanned by elements of the canonical basis of $\mathbb R^{2n}$ and satisfies the algebraic condition $S\subset\Ker(\Imag F)$, with $F$ the Hamilton map of $q$, the semigroup $(e^{-tq^w})_{t\geq0}$ enjoys partial Gelfand-Shilov smoothing properties, with a control in $\mathcal O(t^{-(2k_0+1)(\vert\alpha\vert+\vert\beta\vert+s)})$ of the seminorms as $t\rightarrow0^+$, where $s = 9n/4+2\lfloor n/2\rfloor + 3$. The power $(2k_0+1)(\vert\alpha\vert+\vert\beta\vert+s)$ is not expected to be sharp. It would be interesting to understand if the upper bound in Theorem \ref{20112017T1} may be sharpened in $\mathcal O(t^{-\frac{2k_0+1}2(\vert\alpha\vert+\vert\beta\vert+2n)})$ as in the estimate \eqref{05122017E2}.

\begin{ex} Theorem \ref{20112017T1} applies in particular for quadratic operators $q^w(x,D_x)$ associated to complex-valued quadratic forms with non-negative real parts and zero singular spaces $S = \{0\}$. The result of Theorem \ref{20112017T1} allows to recover the Gelfand-Shilov regularizing properties of the associated semigroup $(e^{-tq^w})_{t\geq0}$ for small times given by \eqref{05122017E2}, up to the power of the time variable $t$ which is less precise in the above statement.
\end{ex}

Let us state the fact that Theorem \ref{20112017T1} applies for quadratic operators with non-symplectic singular spaces:

\begin{ex}\label{ex1} Let $K$ be the Kramers-Fokker-Planck operator without external potential:
$$K = -\Delta_v + \frac14 \vert v\vert^2 +\langle v,\nabla_x\rangle,\quad (x,v)\in\mathbb R^{2n}.$$
We recall from \eqref{29062018E5} that the Weyl symbol of $K$ is the quadratic form $q$ defined by
\begin{equation}\label{29062018E6}
	q(x,v,\xi,\eta) = \vert\eta\vert^2 + \frac14\vert v\vert^2 + i\langle v,\xi\rangle,\quad (x,v,\xi,\eta)\in\mathbb R^{4n}.
\end{equation}
Moreover, the Hamilton map and the singular space of $q$ are respectively given from \eqref{29062018E7} and \eqref{11122017E4} by
$$F = \frac12\begin{pmatrix}
	0_n & iI_n & 0_n & 0_n \\
	0_n & 0_n & 0_n & 2I_n \\
	0_n & 0_n & 0_n & 0_n \\
	0_n & -\frac12I_n & -iI_n & 0_n
\end{pmatrix},$$
and
$$S = \Ker(\Reelle F)\cap\Ker\left(\Reelle F(\Imag F)\right)\cap\mathbb R^{4n} = \mathbb R^n_x\times\{0_{\mathbb R^n_v}\}\times\{0_{\mathbb R^n_{\xi}}\}\times\{0_{\mathbb R^n_{\eta}}\}.$$
Notice that here, the singular space has not a symplectic structure. Since $S^{\perp} = \mathbb R^{2n}_I\times\mathbb R^{2n}_J$, with $I = \{n+1,\ldots,2n\}$ and $J = \{1,\ldots,2n\}$, the orthogonality being taken with respect to the canonical Euclidean structure of $\mathbb R^{2n}$, and that the inclusion $S\subset\Ker(\Imag F)$ holds, Theorem \ref{20112017T1} shows that there exist some positive constants $C>1$ and $0<t_0<1$ such that for all $(\alpha,\beta,\gamma)\in\mathbb N^{3n}$, $0<t\le t_0$, and $u\in L^2(\mathbb R^{2n})$,
\begin{equation*}
	\big\Vert v^{\alpha}\partial^{\beta}_x\partial^{\gamma}_v(e^{-tK}u)\big\Vert_{L^2(\mathbb R^{2n})}
	\le\frac{C^{1+\vert\alpha\vert+\vert\beta\vert + \vert\gamma\vert}}{t^{3(\vert\alpha\vert + \vert\beta\vert + \vert\gamma\vert + (13n)/2+3)}}\ (\alpha!)^{\frac12}\ (\beta!)^{\frac12}\ (\gamma!)^{\frac12}\ \Vert u\Vert_{L^2(\mathbb R^{2n})}.
\end{equation*}
We refer the reader to Section 5 for an extension of this result to generalized Ornstein-Uhlenbeck operators.
\end{ex}

It is still an open question to know if the algebraic condition $S\subset\Ker(\Imag F)$ on the Hamilton map $F$ and the singular space $S$ of $q$ in Theorem \ref{20112017T1} can be weakened or simply removed. However, as pointed out by the following particular example, there exists a class of complex-valued quadratic forms $q:\mathbb R^n_x\times\mathbb R^n_{\xi}\rightarrow\mathbb C$ with non-negative real parts $\Reelle q\geq0$ such that the result of Theorem \ref{20112017T1} holds with a sharp upper-bound as in \eqref{05122017E2} even if the assumption $S\subset\Ker(\Imag F)$ fails.

\begin{ex} We consider the complex-valued quadratic form $q:\mathbb R^n_x\times\mathbb R^n_{\xi}\rightarrow\mathbb C$ defined by
\begin{equation}\label{27062018E3}
	q(x,\xi) = \frac12\vert Q^{\frac12}\xi\vert^2 - i\langle Bx,\xi\rangle,\quad (x,\xi)\in\mathbb R^{2n},
\end{equation}
where $B$ and $Q$ are real $n\times n$ matrices, with $Q$ symmetric positive semidefinite, $B$ and $Q^{\frac12}$ satisfying an algebraic condition called the Kalman rank condition. We refer the reader to Section \ref{GOU} for the definition of the Kalman rank condition and the calculus of the Hamilton map and the singular space of $q$ respectively given by 
\begin{equation}\label{17072018E1}
F = \frac12\begin{pmatrix}
	-iB & Q \\
	0 & iB^T
\end{pmatrix}\quad \text{and}\quad S = \mathbb R^n\times\{0\}.
\end{equation}
Notice that $S^{\perp} = \mathbb R^n_I\times\mathbb R^n_J$, with $I = \emptyset$ and $J=\{1,\ldots,n\}$, the orthogonality being taken with respect to the canonical Euclidean structure of $\mathbb R^{2n}$, and that the inclusion $S\subset\Ker(\Imag F)$ holds if and only if $\mathbb R^n\subset\Ker B$. Therefore, the inclusion $S\subset\Ker(\Imag F)$ does not hold when $B\ne0$. However, by explicitly computing $e^{-tq^w}u$ for all $t\geq0$ and $u\in L^2(\mathbb R^n)$ and exploiting the Kalman rank condition, J. Bernier and the author proved in \cite{AB} (Theorem 1.2) that there exist some positive constants $C>1$ and $t_0>0$ such that for all $\beta\in\mathbb N^n$, $0<t\le t_0$ and $u\in L^2(\mathbb R^n)$,
\begin{equation}\label{17072018E2}
	\big\Vert\partial^{\beta}_x(e^{-tq^w}u)\big\Vert_{L^2(\mathbb R^n)}
\le \frac{C^{1+\vert\beta\vert}}{t^{\frac{2k_0+1}{2}\vert\beta\vert}}\ (\beta!)^{\frac12}\ \Vert u\Vert_{L^2(\mathbb R^n)},
\end{equation}
where $0\le k_0\le 2n-1$ is the smallest integer such that \eqref{22062018E1} holds. Moreover, the estimates \eqref{17072018E2} show that for all $(\alpha,\beta)\in\mathbb N^n_I\times\mathbb N^n_J$, $0<t\le t_0$ and $u\in L^2(\mathbb R^n)$,
$$\big\Vert x^{\alpha}\partial^{\beta}_x(e^{-tq^w}u)\big\Vert_{L^2(\mathbb R^n)}
\le \frac{C^{1+\vert\alpha\vert+\vert\beta\vert}}{t^{\frac{2k_0+1}2(\vert\alpha\vert+\vert\beta\vert)}}\ (\alpha!)^{\frac12}\ (\beta!)^{\frac12}\ \Vert u\Vert_{L^2(\mathbb R^n)},$$
since $\mathbb N^n_I = \{0\}$ and $\mathbb N^n_J = \mathbb N^n$.
\end{ex}

\subsection{Null-controllability of parabolic equations associated with frequency-hypoelliptic accretive quadratic operators} As an application of the smoothing result provided by Theorem \ref{20112017T1}, we study the null-controllability of parabolic equations posed on the whole Euclidean space associated with a general class of accretive quadratic operators
\begin{equation}\label{08122017E1}
\left\{\begin{aligned}
	& \partial_tf(t,x) + q^w(x,D_x)f(t,x) = u(t,x)\mathbbm 1_{\omega}(x),\quad (t,x)\in(0,+\infty)\times\mathbb R^n, \\
	& f(0) = f_0\in L^2(\mathbb R^n),
\end{aligned}\right.
\end{equation}
where $\omega\subset\mathbb R^n$ is a Borel subset with positive Lebesgue measure, $\mathbbm 1_{\omega}$ is its characteristic function, and $q^w(x,D_x)$ is an accretive quadratic operator whose Weyl symbol is a complex-valued quadratic form $q:\mathbb R^n_x\times\mathbb R^n_{\xi}\rightarrow\mathbb C$ with a non-negative real part $\Reelle q\geq0$.

\begin{dfn}[Null-controllability] Let $T>0$ and $\omega$ be a Borel subset of $\mathbb R^n$ with positive Lebesgue measure. Equation \eqref{08122017E1} is said to be null-controllable from the set $\omega$ in time $T$ if, for any initial datum $f_0\in L^2(\mathbb R^n)$, there exists $u\in L^2((0,T)\times\mathbb R^n)$, supported in $(0,T)\times\omega$, such that the mild solution of \eqref{08122017E1} satisfies $f(T,\cdot) = 0$.
\end{dfn}

When the singular space of $q$ is reduced to zero $S = \{0\}$, K. Beauchard, P. Jaming and K. Pravda-Starov proved in the recent work \cite{BJKPS} (Theorem 2.2) that the parabolic equation \eqref{08122017E1} associated to $q^w(x,D_x)$ is null-controllable in any positive time $T>0$, once the control subset $\omega\subset\mathbb R^n$ is thick. The thickness of a subset of $\mathbb R^n$ is defined as follows:

\begin{dfn}\label{10092018D1} Let $\gamma\in(0,1]$ and $a = (a_1,\ldots,a_n)\in(\mathbb R^*_+)^n$. Let $P = [0,a_1]\times\ldots\times[0,a_n]\subset\mathbb R^n$. A subset $\omega\subset\mathbb R^n$ is called $(\gamma,a)$-thick if it is measurable and 
$$\forall x\in\mathbb R^n,\quad \vert\omega\cap(x+P)\vert\geq\gamma\prod_{j=1}^na_j,$$
where $\vert\omega\cap(x+P)\vert$ stands for the Lebesgue measure of the set $\omega\cap(x+P)$. A subset $\omega\subset\mathbb R^n$ is called thick if there exist $\gamma\in(0,1]$ and $a\in(\mathbb R^*_+)^n$ such that $\omega$ is $(\gamma,a)$-thick.
\end{dfn}

No general result of null-controllability for the equation \eqref{08122017E1} is known up to now when the singular space of $q$ is non-zero.  However, when the quadratic form $q$ is defined by \eqref{27062018E3}, where $B$ and $Q$ are real $n\times n$ matrices, with $Q$ symmetric positive semidefinite, $B$ and $Q^{\frac12}$ satisfying the Kalman rank condition, we recall that the singular space of $q$ is $S=\mathbb R^n\times\{0\}$ (in particular, $S$ is non-zero), and J. Bernier and the author proved in \cite{AB} (Theorem 1.8) that the parabolic equation \eqref{08122017E1} is null-controllable in any positive time from thick control subsets. Moreover, when $B = 0_n$ and $Q=2I_n$, the quadratic form $q$ is given by $q(x,\xi) = \vert\xi\vert^2$ and \eqref{08122017E1} is the heat equation posed on the whole space :
\begin{equation}\label{25042018E15}
\left\{\begin{aligned}
	& \partial_tf(t,x) - \Delta_xf(t,x) = u(t,x)\mathbbm 1_{\omega}(x),\quad (t,x)\in(0,+\infty)\times\mathbb R^n, \\
	& f(0) = f_0\in L^2(\mathbb R^n).
\end{aligned}\right.
\end{equation}
Recently, M. Egidi and I. Veselic in \cite{MR3816981} and G. Wang, M. Wang, C. Zhang and Y. Zhang in \cite{MR3950015} proved independently that the thickness of the control set $\omega$ is not only a sufficient condition for the null-controllability of the heat equation \eqref{25042018E15}, but also a necessary condition.

In this work, we investigate the sufficient geometric conditions on the singular space $S$ of $q$ which allow to obtain positive null-controllability results for the parabolic equation \eqref{08122017E1} when the control subset $\omega\subset\mathbb R^n$ is thick. The suitable class of symbols $q$ to consider is the following class of diffusive quadratic forms: 

\begin{dfn}[Diffusive quadratic form]\label{2} Let $q:\mathbb R^n_x\times\mathbb R^n_{\xi}\rightarrow\mathbb C$ be a complex-valued quadratic form. We say that $q$ is diffusive if there exists a subset $I\subset\{1,\ldots,n\}$ such that $S^{\perp} = \mathbb R^n_I\times\mathbb R^n_{\xi}$, the orthogonality being taken with respect to the canonical Euclidean structure of $\mathbb R^{2n}$.
\end{dfn}

\begin{ex} Any complex-valued quadratic form $q:\mathbb R^n_x\times\mathbb R^n_{\xi}\rightarrow\mathbb C$ whose singular space is equal to zero $S = \{0\}$ is diffusive, since $S^{\perp} = \mathbb R^n_I\times\mathbb R^n_{\xi}$, with $I = \{1,\ldots,n\}$.
\end{ex}

\begin{ex}\label{exKFP} Let $q:\mathbb R^{2n}_x\times\mathbb R^{2n}_{\xi}\rightarrow\mathbb C$ be the complex-valued quadratic form defined by \eqref{29062018E6}. We recall from \eqref{11122017E4} that the singular space of $q$ is 
$$S = \mathbb R^n_x\times\{0_{\mathbb R^n_v}\}\times\{0_{\mathbb R^n_{\xi}}\}\times\{0_{\mathbb R^n_{\eta}}\}.$$
Therefore, $S^{\perp} = \mathbb R^{2n}_I\times\mathbb R^{2n}_{\xi,\eta}$, with $I = \{n+1,\ldots,2n\}$, which proves that $q$ is diffusive.
\end{ex}

It follows from Theorem \ref{20112017T1} that when the quadratic form $q$ is diffusive and $S\subset\Ker(\Imag F)$, where $F$ and $S$ denote respectively the Hamilton map and the singular space of $q$, the semigroup $(e^{-tq^w})_{t\geq0}$ generated by $q^w(x,D_x)$ is smoothing in the Gevrey space $G^{\frac12}(\mathbb R^n)$. More precisely, there exist some positive constants $C>0$  and $0<t_0<1$ such that for all $0<t\le t_0$, $\alpha\in\mathbb N^n$ and $u\in L^2(\mathbb R^n)$,
\begin{equation}\label{27062018E2}
	\big\Vert\partial^{\alpha}_x(e^{-tq^w}u)\big\Vert_{L^2(\mathbb R^n)}
	\le\frac{C^{1+\vert\alpha\vert}}{t^{(2k_0+1)(\vert\alpha\vert + s)}}\ (\alpha!)^{\frac12}\ \Vert u\Vert_{L^2(\mathbb R^n)},
\end{equation}
where $0\le k_0\le 2n-1$ is the smallest integer such that \eqref{22062018E1} holds and $s = 9n/4+2\lfloor n/2\rfloor+3$. This regularizing property has a key role to prove the following result on the null-controllability of the parabolic equation \eqref{08122017E1}:

\begin{thm}\label{05122017T2} Let $q:\mathbb R^n_x\times\mathbb R^n_{\xi}\rightarrow\mathbb C$ be a complex-valued quadratic form with a non-negative real part $\Reelle q\geq0$. We assume that $q$ is diffusive and that its singular space $S$ satisfies $S\subset\Ker(\Imag F)$, where $F$ is the Hamilton map of $q$. If $\omega\subset\mathbb R^n$ is a thick set, then the parabolic equation
$$\left\{\begin{array}{l}
	\partial_tf(t,x) + q^w(x,D_x)f(t,x) = u(t,x)\mathbbm1_{\omega}(x),\quad (t,x)\in(0,+\infty)\times\mathbb R^n, \\[5pt]
	f(0) = f_0\in L^2(\mathbb R^n),
\end{array}\right.$$
with $q^w(x,D_x)$ being the quadratic differential operator defined by the Weyl quantization of the symbol $q$, is null-controllable from the set $\omega$ in any positive time $T>0$.
\end{thm}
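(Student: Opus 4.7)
The plan is to reduce Theorem \ref{05122017T2} to an observability estimate for the adjoint semigroup via the Hilbert Uniqueness Method, and then to derive that observability by the adapted Lebeau--Robbiano scheme combining the Gevrey smoothing \eqref{27062018E2} with Kovrijkine's spectral inequality on thick sets. First, I would recall the classical equivalence between null-controllability of \eqref{08122017E1} in time $T>0$ from $\omega$ and the observability inequality
\begin{equation*}
\bigl\Vert e^{-T(q^w)^*}g\bigr\Vert_{L^2(\mathbb R^n)}^2 \le C_T \int_0^T \bigl\Vert e^{-t(q^w)^*}g\bigr\Vert_{L^2(\omega)}^2\,dt, \qquad g\in L^2(\mathbb R^n).
\end{equation*}
The adjoint of $q^w$ is $(\overline q)^w$, whose Hamilton map $\overline F$ satisfies $\Reelle\overline F=\Reelle F$ and $\Imag\overline F=-\Imag F$. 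It follows that $\overline q$ has the same singular space $S$, is diffusive with the same index set $I$, and satisfies $S\subset\Ker(\Imag\overline F)$. Theorem \ref{20112017T1} and \eqref{27062018E2} therefore apply verbatim to the adjoint semigroup $(e^{-t(\overline q)^w})_{t\ge0}$.

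Next I would convert the Gevrey estimate \eqref{27062018E2} into a quantitative Gaussian frequency decay. Squaring \eqref{27062018E2}, applying Plancherel and summing the identity $\sum_\alpha \tau^{|\alpha|}\xi^{2\alpha}/\alpha! = e^{\tau|\xi|^2}$ with $\tau\sim t^{2(2k_0+1)}$, the factorial gain $(\alpha!)^{1/2}$ exactly compensates the Taylor denominator and yields constants $C,c>0$ and $0<t_0<1$ such that, for all $0<t\le t_0$, $\rho>0$ and $g\in L^2(\mathbb R^n)$,
\begin{equation*}
\bigl\Vert \mathbbm 1_{|\xi|>\rho}\,\widehat{e^{-t(\overline q)^w}g}\bigr\Vert_{L^2(\mathbb R^n_\xi)} \le C\,t^{-(2k_0+1)s}\, e^{-c\,t^{2(2k_0+1)}\rho^2}\Vert g\Vert_{L^2(\mathbb R^n)}.
\end{equation*}
On the observation side, the thickness of $\omega$ together with Kovrijkine's refinement of the Logvinenko--Sereda theorem provides a constant $K>0$, depending only on $\omega$ and $n$, such that every $f\in L^2(\mathbb R^n)$ with Fourier support in $\{|\xi|\le\rho\}$ satisfies the spectral inequality $\Vert f\Vert_{L^2(\mathbb R^n)}^2\le K e^{K\rho}\Vert f\Vert_{L^2(\omega)}^2$.

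The two ingredients now combine through the adapted Lebeau--Robbiano method developed in \cite{BJKPS}. I would partition $(0,T)$ into dyadic sub-intervals $(T_{j+1},T_j]$, pick an increasing sequence of frequency levels $\rho_j$, and split $e^{-t(\overline q)^w}g$ into its low- and high-frequency parts at the cutoff $\rho_j$. On the $j$-th slice, the low-frequency part is controlled at each time by the observation term through the spectral inequality at cost $e^{K\rho_j}$, while the high-frequency remainder is absorbed into the next slice by the dissipation factor $e^{-c(T_j-T_{j+1})^{2(2k_0+1)}\rho_j^2}$. Since the dissipation exponent grows quadratically in $\rho_j$ whereas the observation cost grows only linearly, a choice $\rho_j\sim (T_j-T_{j+1})^{-2(2k_0+1)-\varepsilon}$ ensures that dissipation dominates, and summing the resulting geometric series delivers a finite observability constant $C_T$.

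The main obstacle is executing the telescoping scheme with the non-autonomous, time-degenerate dissipation rate $e^{-ct^{2(2k_0+1)}\rho^2}$, in which the time variable enters with the large exponent $2(2k_0+1)$ inherited from \eqref{27062018E2}, together with the polynomial prefactor $t^{-(2k_0+1)s}$. One must calibrate the dyadic time steps $T_j-T_{j+1}$ and the cutoff levels $\rho_j$ carefully so that this prefactor stays harmless and the geometric series converges to a constant independent of $g$. This bookkeeping is made systematic by invoking the abstract observability criterion formulated in \cite{BJKPS}, which packages the required verifications into a clean sufficient condition that the present setting meets directly.
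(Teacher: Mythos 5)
Your proposal is correct and follows essentially the same route as the paper: HUM reduction to the adjoint observability estimate (noting the hypotheses are stable under complex conjugation of $q$), conversion of the Gevrey-$1/2$ bound \eqref{27062018E2} into Gaussian high-frequency decay (the paper's Lemma~\ref{19042018L2} carries out precisely your Plancherel/power-series calculation), Kovrijkine's spectral inequality on thick sets, and an abstract Lebeau--Robbiano criterion combining the two. The only difference is bookkeeping: the paper directly invokes the packaged observability criterion of Theorem~\ref{08122017T3} from \cite{BEPS} (with spectral exponent $a=1$, dissipation exponent $b=2$, $m_1=2(2k_0+1)$) rather than hand-tuning the dyadic time steps and frequency cutoffs $\rho_j$ as you sketch.
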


Theorem \ref{05122017T2} allows to consider more degenerate cases than the one when the singular space of $q$ is zero $S = \{0\}$. Indeed, when $S=\{0\}$, the estimates \eqref{05122017E2} show that the semigroup $(e^{-tq^w})_{t\geq0}$ is regularizing in any positive time in the Gelfand-Shilov space $S^{1/2}_{1/2}(\mathbb R^n)$, that is, is regularizing in the whole phase space, while when $q$ is only assumed to be diffusive and when $S\subset\Ker(\Imag F)$, with $F$ the Hamilton map of $q$, the semigroup $(e^{-tq^w})_{t\geq0}$ is only smoothing in the Gevrey space $G^{\frac12}(\mathbb R^n)$ (in the sense that the estimates \eqref{27062018E2} hold). In particular, Theorem \ref{05122017T2} extends \cite{BJKPS} (Theorem 2.2).

\begin{ex} We consider the Kramers-Fokker-Planck equation without external potential posed on the whole space 
\begin{equation}\label{29062018E8}
\left\{\begin{array}{l}
	\partial_tf(t,x,v) +Kf(t,x,v) = u(t,x,v)\mathbbm1_{\omega}(x,v),\quad (t,x,v)\in(0,+\infty)\times\mathbb R^{2n}, \\[5pt]
	f(0) = f_0\in L^2(\mathbb R^{2n}),
\end{array}\right.
\end{equation}
where the operator $K$ is defined by
$$K = - \Delta_v + \frac14\vert v\vert^2 + \langle v,\nabla_x\rangle,\quad (x,v)\in\mathbb R^{2n}.$$
As noticed in Example \ref{ex1}, the Hamilton map $F$ and the singular space $S$ of the quadratic form $q:\mathbb R^{2n}_{x,v}\times\mathbb R^{2n}_{\xi,\eta}\rightarrow\mathbb C$ defined in \eqref{29062018E6}, Weyl symbol of the operator $K$, satisfy the condition $S\subset\Ker(\Imag F)$. Moreover, the quadratic form $q$ is diffusive according to Example \ref{exKFP}. It therefore follows from Theorem \ref{05122017T2} that the equation \eqref{29062018E8} is null-controllable from any thick control subset $\omega\subset\mathbb R^{2n}$ in any positive time $T>0$.
As above, we refer to Section 5 for a generalization of this result to generalized Ornstein-Uhlenbeck equations.
\end{ex}

By the Hilbert Uniqueness Method, see \cite{MR2302744} (Theorem 2.44), the null-controllability of the equation \eqref{08122017E1} is equivalent to the observability of the adjoint system
\begin{equation}\label{27062018E4}
\left\{\begin{aligned}
	& \partial_tg(t,x) + (q^w(x,D_x))^*g(t,x) = 0,\quad (t,x)\in(0,+\infty)\times\mathbb R^n, \\
	& g(0) = g_0\in L^2(\mathbb R^n).
\end{aligned}\right.
\end{equation}
We recall the definition of the notion of observability:

\begin{dfn}[Observability]\label{3} Let $T>0$ and $\omega$ be a Borel subset of $\mathbb R^n$ with positive Lebesgue measure. Equation \eqref{27062018E4} is said to be observable from the set $\omega$ in time $T$ if there exists a constant $C_T>0$ such that, for any initial datum $g_0\in L^2(\mathbb R^n)$, the mild solution of \eqref{27062018E4} satisfies
\begin{equation}\label{05122017E1}
	\Vert g(T,x)\Vert^2_{L^2(\mathbb R^n)}\le C_T\int_0^T\Vert g(t,x)\Vert^2_{L^2(\omega)}dt.
\end{equation}
\end{dfn}

The $L^2(\mathbb R^n)$-adjoint of the quadratic operator $(q^w(x,D_x),D(q^w))$ is given by the quadratic operator $(\overline q^w(x,D_x),D(\overline q^w))$, whose Weyl symbol is the complex conjugate of the symbol $q$. Moreover, the Hamilton map of $\overline q$ is $\overline F$, where $F$ is the Hamilton map of $q$. This implies that $q$ and $\overline q$ do have the same singular space. As a consequence, the assumptions of Theorem \ref{05122017T2} hold for the quadratic operator $q^w(x,D_x)$ if and only if they hold for its $L^2(\mathbb R^n)$-adjoint operator $\overline q^w(x,D_x)$. We deduce from the Hilbert Uniqueness Method that the result of null-controllability given by Theorem \ref{05122017T2} is therefore equivalent to the following observability estimate: 

\begin{thm}\label{08122017T1} Let $q:\mathbb R^n_x\times\mathbb R^n_{\xi}\rightarrow\mathbb C$ be a complex-valued quadratic form with a non-negative real part $\Reelle q\geq0$. We assume that $q$ is diffusive and that its singular space $S$ satisfies $S\subset\Ker(\Imag F)$, where $F$ is the Hamilton map of $q$. If $\omega\subset\mathbb R^n$ is a thick subset, there exists a positive constant $C>1$ such that for all $T>0$ and $g\in L^2(\mathbb R^n)$,
\begin{equation}\label{08122017E9}
	\big\Vert e^{-Tq^w}g\big\Vert^2_{L^2(\mathbb R^n)}\le C\exp\left(\frac C{T^{2(2k_0+1)}}\right)\int_0^T\big\Vert e^{-tq^w}g\big\Vert^2_{L^2(\omega)}\ dt,
\end{equation}
where $0\le k_0\le 2n-1$ is the smallest integer such that \eqref{22062018E1} holds.
\end{thm}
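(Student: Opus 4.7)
The plan is to prove directly the observability estimate \eqref{08122017E9}, since the excerpt recalls that it is equivalent to the null-controllability statement of Theorem \ref{05122017T2} via the Hilbert Uniqueness Method. I will adapt the strategy developed by K. Beauchard, P. Jaming and K. Pravda-Starov in \cite{BJKPS} for the case $S=\{0\}$, now using the partial Gevrey-$\tfrac12$ smoothing estimate \eqref{27062018E2} (a consequence of Theorem \ref{20112017T1} under the diffusivity and $S\subset\Ker(\Imag F)$ assumptions of Theorem \ref{05122017T2}) in place of their stronger Gelfand--Shilov smoothing \eqref{05122017E2}. The three ingredients are: a high-frequency dissipation estimate deduced from \eqref{27062018E2}; the Logvinenko--Sereda spectral inequality for thick subsets; and a Lebeau--Robbiano / Miller telescoping argument.

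For the first ingredient, I expand $|\xi|^{2k}=\sum_{|\alpha|=k}\tfrac{k!}{\alpha!}\xi^{2\alpha}$, apply Plancherel's theorem, insert \eqref{27062018E2}, and sum the resulting geometric series in $c\,t^{2(2k_0+1)}$ to obtain constants $c_0,C_0>0$ such that, for every $0<t\le t_0$ and $g\in L^2(\mathbb R^n)$,
$$\int_{\mathbb R^n}e^{c_0 t^{2(2k_0+1)}|\xi|^2}\bigl|\widehat{e^{-tq^w}g}(\xi)\bigr|^2\,d\xi\le C_0\,t^{-2(2k_0+1)s}\,\|g\|_{L^2}^2.$$
Denoting by $\mathbbm 1_{B_R}(D_x)$ the Fourier multiplier with symbol $\mathbbm 1_{\{|\xi|\le R\}}$, this immediately yields for every $R>0$ the dissipation bound
$$\bigl\|(1-\mathbbm 1_{B_R}(D_x))e^{-tq^w}g\bigr\|_{L^2}^2\le C_0\,t^{-2(2k_0+1)s}\,e^{-c_0 t^{2(2k_0+1)}R^2}\,\|g\|_{L^2}^2.$$

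For the second ingredient, I invoke the Logvinenko--Sereda theorem for thick sets in Kovrijkine's sharp form (as used in \cite{MR3816981,MR3950015,BJKPS}): since $\omega\subset\mathbb R^n$ is thick, there exist constants $K_1,K_2>0$, depending only on the thickness parameters of $\omega$, such that $\|f\|_{L^2(\mathbb R^n)}\le K_1\,e^{K_2 R}\,\|f\|_{L^2(\omega)}$ for every $R>0$ and $f\in L^2(\mathbb R^n)$ with $\mathrm{supp}\,\widehat f\subset\{|\xi|\le R\}$. Splitting $e^{-tq^w}g$ into low- and high-frequency parts, applying the spectral inequality to the first and the dissipation estimate to the second, and using the contraction $\|e^{-t_2 q^w}g\|\le\|e^{-t q^w}g\|$ for $t\le t_2$, one derives after integrating in $t\in[(t_1+t_2)/2,t_2]$ a one-step inequality
$$\|e^{-t_2 q^w}g\|^2\le A\int_{t_1}^{t_2}\|e^{-tq^w}g\|^2_{L^2(\omega)}\,dt+B\,\|e^{-t_1q^w}g\|^2,$$
with $A\le C(t_2-t_1)^{-1}e^{2K_2R}$ and $B\le C\,e^{2K_2R-c_0((t_2-t_1)/2)^{2(2k_0+1)}R^2}$, polynomial prefactors in $t_2-t_1$ being absorbed into constants.

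The final step is a Lebeau--Robbiano telescoping. Take the dyadic sequence $T_k=T\,2^{-k}$ and apply the one-step inequality on $[T_{k+1},T_k]$ with $R_k:=\kappa\,T_{k+1}^{-2(2k_0+1)}$ for a parameter $\kappa>0$ to be tuned. A direct computation yields $A_k\le C\,2^k T^{-1}e^{c_1\kappa T_k^{-2(2k_0+1)}}$ and $B_k\le C\,e^{(c_1\kappa-c_2\kappa^2)T_k^{-2(2k_0+1)}}$, so that for $\kappa$ sufficiently large the weights $\prod_{j<k}B_j\,A_k$ are summable in $k\ge 1$ and the remainder $\prod_{j<N}B_j\,\|e^{-T_Nq^w}g\|^2\le\prod_{j<N}B_j\,\|g\|^2$ vanishes as $N\to\infty$ by contraction of the semigroup; the leading term $A_0\sim e^{C/T^{2(2k_0+1)}}$ produces the announced observability constant, the regime $T>t_0$ being handled by the contraction of the semigroup on $[t_0,T]$. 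The main obstacle is precisely this tuning of $\kappa$: the Gevrey-$\tfrac12$ exponent (i.e.\ the quadratic power of $R$ in the Gaussian dissipation) must dominate both the linear factor $e^{2K_2 R_k}$ coming from Logvinenko--Sereda \emph{and} the compounded growth of all previous $A_j$'s inherited from the telescoping. This balance is what fixes the scale $R_k\sim T_k^{-2(2k_0+1)}$ and dictates the $T^{-2(2k_0+1)}$ blow-up appearing in \eqref{08122017E9}.
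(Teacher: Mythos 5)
Your proof is correct and follows essentially the same strategy as the paper: a Kovrijkine (Logvinenko--Sereda) spectral inequality for thick sets combined with a Gaussian frequency-side dissipation estimate derived from the Gevrey--$\tfrac12$ smoothing bound \eqref{27062018E2}, fed into a Lebeau--Robbiano iteration. The only structural difference is that the paper invokes the abstract observability result \cite{BEPS} (Theorem 3.2), stated here as Theorem~\ref{08122017T3}, with parameters $a=1$, $b=2$, $m_1=2(2k_0+1)$ to obtain the exponent $am_1/(b-a)=2(2k_0+1)$ directly, while you carry out the dyadic telescoping by hand; and the paper derives the Gaussian Fourier-side estimate via the abstract Lemma~\ref{19042018L2} (which is Proposition 6.1.5 of \cite{MR2668420}), whereas you re-derive the same estimate by expanding $\vert\xi\vert^{2k}$ multinomially and summing the geometric series --- two routes to the same inequality. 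Your explicit iteration makes transparent the origin of the blow-up $T^{-2(2k_0+1)}$ and the balance between the linear $e^{CR}$ cost of the spectral inequality and the quadratic Gaussian gain $e^{-ct^{2(2k_0+1)}R^2}$, which the black-box application obscures slightly; the paper's version is shorter and cleanly separates the abstract Lebeau--Robbiano machinery from the two operator-specific inputs.
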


As the results of Theorems \ref{05122017T2} and \ref{08122017T1} are equivalent, we only need to prove Theorem \ref{08122017T1}, and the proof of this observability estimate is based on the regularizing effect \eqref{27062018E2} while using a Lebeau-Robbiano strategy.

\subsubsection{Outline of the work} In Section \ref{regeffects}, we study a family of time-dependent pseudodifferential operators whose symbols are models of the Mehler symbols given by formula \eqref{04122017E8}. Thanks to the Mehler formula, the properties of these operators allow to prove Theorem \ref{20112017T1} in Section \ref{refquad}. The Section \ref{controllability} is devoted to the proof of Theorem \ref{08122017T1}. An application to the study of generalized Ornstein-Uhlenbeck operators is given in Section \ref{GOU}. Section \ref{Appendix} is an appendix devoted to the proofs of some technical results.

\section{Regularizing effects of time-dependent pseudodifferential operators}
\label{regeffects}

Let $T>0$ and $q_t:\mathbb R^n\times\mathbb R^n\rightarrow\mathbb C$ be a time-dependent complex-valued quadratic form whose coefficients depend continuously on the time variable $0\le t\le T$. We assume that there exist some positive constants $0<T^*<T$ and $c>0$, a positive integer $k\geq1$, and $I,J\subset\{1,\ldots,n\}$ such that
\begin{equation}\label{08052018E2}
	\forall t\in[0,T^*], \forall X\in\mathbb R^n_I\times\mathbb R^n_J,\quad (\Reelle q_t)(X)\geq ct^k\vert X\vert^2,
\end{equation}
and 
\begin{equation}\label{19042018E5}
	\forall t\in[0,T^*], \forall X\in\mathbb R^n\times\mathbb R^n,\quad q_t(X) = q_t(X_{I,J}),
\end{equation}
where $X_{I,J}$ stands for the component in $\mathbb R^n_I\times\mathbb R^n_J$ of the vector $X\in\mathbb R^n\times\mathbb R^n$ according to the decomposition $\mathbb R^n\times\mathbb R^n = (\mathbb R^n_I\times\mathbb R^n_J)\oplus^{\perp}(\mathbb R^n_I\times\mathbb R^n_J)^{\perp}$, the orthogonality being taken with respect to the canonical Euclidean structure of $\mathbb R^n\times\mathbb R^n$, and where the notation $\mathbb R^n_I\times\mathbb R^n_J$ is defined in Definition \ref{1}. This section is devoted to the study of the regularizing effects of the pseudodifferential operators $(e^{-q_t})^w$ acting on $L^2(\mathbb R^n)$ defined by the Weyl quantization of the symbols $e^{-q_t}$. The main result of this section is the following:

\begin{thm}\label{18042018T1} Let $T>0$ and $q_t:\mathbb R^n\times\mathbb R^n\rightarrow\mathbb C$ be a time-dependent complex-valued quadratic form satisfying \eqref{08052018E2} and \eqref{19042018E5}, and whose coefficients depend continuously on the time variable $0\le t\le T$. Then, there exist some positive constants $C>1$ and $0<t_0<\min(1,T^*)$ such that for all $(\alpha,\beta)\in\mathbb N^n_I\times\mathbb N^n_J$, $0< t\le t_0$ and $u\in L^2(\mathbb R^n)$, 
$$\big\Vert x^{\alpha}\partial^{\beta}_x(e^{-q_t})^wu\big\Vert_{L^2(\mathbb R^n)}\le\frac{C^{1+\vert\alpha\vert+\vert\beta\vert}}{t^{k(\vert\alpha\vert+\vert\beta\vert+s)}}\ (\alpha!)^{\frac12}\ (\beta!)^{\frac12}\ \Vert u\Vert_{L^2(\mathbb R^n)},$$
where $0<T^*<T$, $k\geq1$, $I,J\subset\{1,\ldots,n\}$ are defined in \eqref{08052018E2} and \eqref{19042018E5}, and $s = 9n/4+2\lfloor n/2\rfloor + 3$.
\end{thm}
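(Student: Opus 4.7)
The plan is to represent $x^\alpha\partial^\beta_x(e^{-q_t})^w$ as a single pseudodifferential operator via the Weyl--Moyal composition, and then bound its $L^2$-operator norm by uniform seminorms of the resulting symbol, exploiting the Gaussian decay in the $(x_I,\xi_J)$ directions furnished by \eqref{08052018E2}.

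The first step is to use the Weyl--Moyal product to write $x^\alpha\partial^\beta_x(e^{-q_t})^w=\Op^w(p_{\alpha,\beta}\#^w e^{-q_t})$, where $p_{\alpha,\beta}$ is the Weyl symbol of the differential operator $x^\alpha\partial^\beta_x$, which is a polynomial in $(x,\xi)$ of bidegree at most $(|\alpha|,|\beta|)$. Since one factor in the Moyal product is polynomial, the asymptotic expansion truncates to a finite sum. Combining this with hypothesis \eqref{19042018E5}, which forces $q_t$ to depend only on coordinates indexed by $I$ and $J$, together with the assumption $(\alpha,\beta)\in\mathbb N^n_I\times\mathbb N^n_J$, yields an exact formula of the form
\[(p_{\alpha,\beta}\#^w e^{-q_t})(X)=R_{\alpha,\beta,t}(x_I,\xi_J)\,e^{-q_t(x_I,\xi_J)},\]
in which $R_{\alpha,\beta,t}$ is a polynomial of total degree $\le |\alpha|+|\beta|$ in the reduced variables $(x_I,\xi_J)$.

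The second step is to estimate $R_{\alpha,\beta,t}$ and its derivatives combinatorially. Since $q_t$ is quadratic with coefficients depending continuously on $t$ on the compact interval $[0,T]$, only first- and second-order derivatives of $q_t$ are nonzero, and those are uniformly bounded in~$t$. A Fa\`a di Bruno computation for $\partial^\gamma e^{-q_t}$, combined with the multinomial structure of the Moyal sum, then yields bounds of the form
\[|\partial^\delta R_{\alpha,\beta,t}(X)|\le C^{1+|\alpha|+|\beta|+|\delta|}\,(\alpha!)^{1/2}(\beta!)^{1/2}\sqrt{\delta!}\,(1+|(x_I,\xi_J)|)^{|\alpha|+|\beta|+|\delta|}.\]
To convert symbol estimates into an operator-norm bound I would invoke a Calder\'on--Vaillancourt-type inequality
\[\|\Op^w(a)\|_{\mathcal L(L^2(\mathbb R^n))}\le C_n\sup_{|\gamma|\le N_0}\|\partial^\gamma a\|_{L^\infty(\mathbb R^{2n})}\]
for some integer $N_0$ depending only on $n$; the value $s=9n/4+2\lfloor n/2\rfloor+3$ is precisely the Sobolev-type index absorbing this dimension constant together with the losses from the symbolic derivatives. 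Using \eqref{08052018E2} to write $e^{-\Reelle q_t(X)}\le e^{-ct^k|X_{I,J}|^2}$ and then maximizing $(1+|X_{I,J}|)^{m}e^{-ct^k|X_{I,J}|^2}$ in $X$, by a standard Gaussian maximum calculation (carried out with loose bounds in order to produce the announced, not-expected-to-be-sharp exponent), delivers the factor $t^{-k(|\alpha|+|\beta|+s)}$.

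The hardest point is that $e^{-q_t}$ is bounded but does \emph{not} decay in the phase-space directions where $q_t$ is trivial, so the naive Calder\'on--Vaillancourt bound cannot be applied directly to the symbol on all of $\mathbb R^{2n}$. One must exploit the tensor structure of $\Op^w(e^{-q_t})$: in the directions where the symbol is constant, the operator acts as the identity, so that the boundedness criterion is really being applied to the reduced, truly Gaussian part of the symbol on the subspace indexed by $I\cup J$. Performing this reduction while preserving the factorial gain $(\alpha!)^{1/2}(\beta!)^{1/2}$ and producing the exact power of $t$ stated in the theorem is the main technical content of this section.
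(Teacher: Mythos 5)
Your overall strategy---expand $x^\alpha\ \sharp\ \xi^\beta\ \sharp\ e^{-q_t}$ via the Moyal product, exploit the hypotheses to reduce to the variables indexed by $I$ and $J$, and then invoke Calder\'on--Vaillancourt---is indeed the skeleton of the paper's proof. However, there is a concrete gap in the central estimate, and it comes precisely at the point you describe as ``the hardest point'' but do not actually carry out.

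The gap is in the step where you bound the symbol as $R_{\alpha,\beta,t}(X_{I,J})\,e^{-q_t(X_{I,J})}$, use the uniform bound
\[\bigl\lvert\partial^\delta R_{\alpha,\beta,t}(X)\bigr\rvert\le C^{1+\vert\alpha\vert+\vert\beta\vert+\vert\delta\vert}(\alpha!)^{1/2}(\beta!)^{1/2}(\delta!)^{1/2}(1+\vert X_{I,J}\vert)^{\vert\alpha\vert+\vert\beta\vert+\vert\delta\vert},\]
and then maximize $(1+\vert X_{I,J}\vert)^{m}e^{-ct^k\vert X_{I,J}\vert^2}$ with $m=\vert\alpha\vert+\vert\beta\vert+\vert\delta\vert$. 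The Gaussian maximum contributes a factor on the order of $(m!)^{1/2}t^{-km/2}$, and since $(\vert\alpha\vert+\vert\beta\vert)!\sim C^{\vert\alpha\vert+\vert\beta\vert}\alpha!\,\beta!$, you end up with $(\alpha!)^{1/2}(\beta!)^{1/2}\times(\alpha!\,\beta!)^{1/2}=\alpha!\,\beta!$, not $(\alpha!)^{1/2}(\beta!)^{1/2}$. In other words, separately bounding the polynomial factor in $L^\infty$ (which alone costs $(\alpha!\beta!)^{1/2}$) and then maximizing the monomial against the Gaussian (which costs another $(\alpha!\beta!)^{1/2}$) double-counts the factorial growth. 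The resulting estimate is Gevrey~$1$ in $(\alpha,\beta)$, not Gelfand--Shilov~$1/2$ as claimed in the theorem.

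What the paper does instead is bound each Moyal term $X^{\mu}\partial^{\nu}_X e^{-q_t}$ \emph{directly}, never decoupling the polynomial from the Gaussian. The crucial missing ingredient in your sketch is the Fourier-side Gaussian estimate (Lemma~\ref{18042018L2}): $\vert\widehat{e^{-q_t}}(\Xi)\vert\lesssim t^{-kN/2}e^{-c_2 t^{2k}\vert\Xi\vert^2}$, with the exponent $t^{2k}$ rather than $t^{k}$, reflecting the possibly large imaginary part of $q_t$. Feeding both the $X$-side and the $\Xi$-side Gaussian decays into the Gelfand--Shilov criterion (Proposition~\ref{28032018P1}, proved via integration by parts and Cauchy--Schwarz) yields the sharp bound $\Vert X^{\mu}\partial^{\nu}_X e^{-q_t}\Vert_{L^\infty}\lesssim C^{\vert\mu\vert+\vert\nu\vert}t^{-\frac{k}{2}(\vert\mu\vert+2\vert\nu\vert+s)}(\mu!)^{1/2}(\nu!)^{1/2}$ (Lemma~\ref{19042018L1}), which is the square-root-factorial estimate you need. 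Only after this does the Moyal combinatorics (Lemma~\ref{20092017L1} plus binomial-coefficient bookkeeping) produce the $(\alpha!)^{1/2}(\beta!)^{1/2}$ in the theorem. Without the two-sided Fourier estimate your argument cannot recover the exponent~$1/2$, and a purely $X$-space Gaussian-maximum argument will not fix this.

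A secondary remark on your last paragraph: Calder\'on--Vaillancourt requires only $L^\infty$ bounds on finitely many derivatives of the symbol, not decay in all phase-space directions, so the fact that $e^{-q_t}$ is merely bounded (not decaying) in the directions complementary to $\mathbb R^n_I\times\mathbb R^n_J$ poses no obstruction at all. The paper applies it directly to the full symbol on $\mathbb R^{2n}$; the restriction $(\alpha,\beta)\in\mathbb N^n_I\times\mathbb N^n_J$ together with \eqref{19042018E5} guarantees that all the monomials $x^{\alpha-\cdots}\xi^{\beta-\cdots}$ appearing in the Moyal expansion live in the good variables, which is what makes the $L^\infty$ bounds finite. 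No separate tensor-factorization of the operator is needed.
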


The proof of Theorem \ref{18042018T1} is based on symbolic calculus. In the following of this section, we study the Weyl symbol of the operator $x^{\alpha}\partial^{\beta}_x(e^{-q_t})^w$, that is, the symbol $x^{\alpha}\ \sharp\ (i\xi)^{\beta}\ \sharp\ e^{-q_t(x,\xi)}$, where $\sharp$ denotes the Moyal product defined for all $a,b$ in proper symbol classes by
$$(a\ \sharp\ b)(x,\xi) = \left.e^{\frac{i}{2}\sigma(D_x,D_{\xi};D_y,D_{\eta})}a(x,\xi)\ b(y,\eta)\right\vert_{(x,\xi) = (y,\eta)},$$
see e.g. (18.5.6) in \cite{MR2304165}.

\subsection{Gelfand-Shilov type estimates} In this first subsection, we study the properties of the time-dependent symbol $e^{-q_t}$, where $q_t:\mathbb R^N\rightarrow\mathbb C$ is a time-dependent complex-valued quadratic form whose coefficients are continuous functions of the time variable $0\le t\le T^*$, with $T^*>0$, satisfying that there exist a positive constant $c>0$ and a positive integer $k\geq1$ such that
\begin{equation}\label{11092017E1}
	\forall t\in[0,T^*],\forall X\in\mathbb R^N,\quad (\Reelle q_t)(X) \geq c t^k\vert X\vert^2.
\end{equation}

\begin{lem}\label{18042018L2} Let $T>0$ and $q_t:\mathbb R^N\rightarrow\mathbb C$ be a time-dependent complex-valued quadratic form satisfying \eqref{11092017E1}, and whose coefficients depend continuously on the time variable $0\le t\le T$. Then, there exist some positive constants $0<t_0<\min(1,T^*)$ and $c_1, c_2>0$ such that the Fourier transform of the symbol $e^{-q_t}$ satisfies the estimates
$$\forall t\in(0,t_0),\forall\Xi\in\mathbb R^N,\quad\big\vert\widehat{e^{-q_t}}(\Xi)\big\vert
\le \frac{c_1}{t^{\frac{kN}2}}e^{-c_2t^{2k}\vert\Xi\vert^2},$$
where $0<T^*<T$ and $k\geq1$ are defined in \eqref{11092017E1}.
\end{lem}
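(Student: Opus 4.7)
\bigskip

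\noindent\textbf{Proof proposal.} The strategy is to recognise $e^{-q_t}$ as a complex Gaussian on $\mathbb{R}^N$ and to reduce the statement to two quantitative facts about the complex symmetric matrix associated with $q_t$: a lower bound on the modulus of its determinant, and a lower bound on the real part of its inverse. Write $q_t(X)=\langle X,Q_tX\rangle$ where $Q_t\in S_N(\mathbb{C})$ depends continuously on $t\in[0,T]$, and split $Q_t=A_t+iB_t$ with $A_t,B_t\in S_N(\mathbb{R})$. Hypothesis \eqref{11092017E1} means exactly that $A_t\geq ct^k I_N$ on $[0,T^*]$, and continuity of the coefficients provides a uniform bound $\|Q_t\|\leq M$, hence $\|B_t\|\leq M$, on a small interval $[0,t_0]$ with $0<t_0<\min(1,T^*)$. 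The Fourier transform of a complex Gaussian is computed by analytic continuation from the real case: since $\Reelle Q_t$ is positive definite, $e^{-q_t}\in\mathscr{S}(\mathbb{R}^N)$ and one has
\begin{equation*}
\widehat{e^{-q_t}}(\Xi)=\frac{\pi^{N/2}}{\sqrt{\det Q_t}}\,e^{-\frac14\langle\Xi,Q_t^{-1}\Xi\rangle},\qquad \Xi\in\mathbb{R}^N,
\end{equation*}
for a suitable holomorphic branch of the square root (this is classical, see e.g.\ Hörmander, and it is also implicit in the Mehler formula \eqref{04122017E8}).

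The first key step is the determinant estimate. Writing $Q_t=A_t^{1/2}(I_N+iM_t)A_t^{1/2}$ with $M_t:=A_t^{-1/2}B_tA_t^{-1/2}\in S_N(\mathbb{R})$, one gets $\det Q_t=\det A_t\cdot\det(I_N+iM_t)$, and since $M_t$ is real symmetric its eigenvalues $\mu_1,\dots,\mu_N$ are real, whence
\begin{equation*}
|\det(I_N+iM_t)|=\prod_{j=1}^N|1+i\mu_j|\geq 1.
\end{equation*}
Combined with $\det A_t\geq(ct^k)^N$, this yields $|\det Q_t|\geq(ct^k)^N$ and therefore the prefactor $c_1 t^{-kN/2}$ in the claimed bound.

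The second key step is to control $\Reelle Q_t^{-1}$ from below. From the decomposition above,
\begin{equation*}
Q_t^{-1}=A_t^{-1/2}(I_N+iM_t)^{-1}A_t^{-1/2}=A_t^{-1/2}\bigl[(I_N+M_t^2)^{-1}-iM_t(I_N+M_t^2)^{-1}\bigr]A_t^{-1/2},
\end{equation*}
so $\Reelle Q_t^{-1}=A_t^{-1/2}(I_N+M_t^2)^{-1}A_t^{-1/2}\geq(1+\|M_t\|^2)^{-1}A_t^{-1}$. Since $\|M_t\|\leq\|A_t^{-1}\|\,\|B_t\|\leq M/(ct^k)$, the quantity $(1+\|M_t\|^2)^{-1}$ is bounded below by a constant multiple of $t^{2k}$ on $(0,t_0)$, and $A_t^{-1}\geq M^{-1}I_N$ by the uniform upper bound on $A_t$. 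Putting these together gives $\Reelle Q_t^{-1}\geq c_2' t^{2k}I_N$, so that $\Reelle\langle\Xi,Q_t^{-1}\Xi\rangle\geq c_2't^{2k}|\Xi|^2$ for every $\Xi\in\mathbb{R}^N$ and every $t\in(0,t_0)$. Taking moduli in the Gaussian Fourier formula and combining the two bounds yields the claim with $c_2=c_2'/4$.

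The only point that requires mild care is the analytic continuation justifying the Gaussian Fourier formula for a complex symmetric matrix with positive definite real part; this is a standard manipulation and I would simply invoke it rather than redo the contour-shift argument. The rest is linear algebra, and the role of the assumption \eqref{11092017E1} is transparent: the factor $t^k$ in the lower bound on $\Reelle q_t$ produces $t^{kN/2}$ in the prefactor through the determinant and $t^{2k}$ in the Gaussian decay rate through $\Reelle Q_t^{-1}$, exactly as in the statement.
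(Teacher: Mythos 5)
Your proposal is correct and follows essentially the same path as the paper: both reduce to the complex Gaussian Fourier transform formula and then estimate the determinant and the real part of the inverse matrix, using the lower bound $\Reelle Q_t\geq ct^k I_N$ together with the continuity-based upper bound on $Q_t$. The paper carries out a simultaneous diagonalization of $\Reelle q_t$ and $\Imag q_t$ explicitly (producing a change-of-basis matrix $P_t$, eigenvalues $\lambda_{j,t}$ with $|\lambda_{j,t}|\lesssim t^{-k}$, and the diagonal matrix $\Delta_t=\Reelle(S_t^{-1})$), whereas you encode the same reduction compactly as $Q_t=A_t^{1/2}(I_N+iM_t)A_t^{1/2}$ and use operator inequalities; the two formulations are equivalent and lead to identical powers of $t$.
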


\begin{proof} Let $0<t\le T^*$. Since $\Reelle q_t$ satisfies \eqref{11092017E1}, the spectral theorem allows to diagonalize $\Imag q_t$ with respect to $\Reelle q_t$. More precisely, there exist $(e_{1,t},\ldots,e_{N,t})$ a basis of $\mathbb R^N$ and $\lambda_{1,t},\ldots,\lambda_{N,t}\in\mathbb R$ some real numbers satisfying that for all $1\le j,k\le N$,
\begin{equation}\label{23042018E4}
	(\Reelle q_t)(e_{j,t},e_{k,t}) = \delta_{j,k}\quad \text{and}\quad (\Imag q_t)(e_{j,t},e_{k,t}) = \lambda_{j,t}\delta_{j,k},
\end{equation}
with $\Reelle q_t(\cdot,\cdot)$ and $\Imag q_t(\cdot,\cdot)$ the polarized forms associated to the quadratic forms $\Reelle q_t$ and $\Imag q_t$ respectively, and where $\delta_{j,k}$ denotes the Kronecker delta. Let $P_t\in\GL_N(\mathbb R)$ be the matrix associated to the change of basis mapping the canonical basis of $\mathbb R^N$ to $(e_{1,t},\ldots,e_{N,t})$. Moreover, we consider $D_t\in M_N(\mathbb R)$ the diagonal matrix given by
\begin{equation}\label{19042018E1}
	D_t = \Diag(\lambda_{1,t},\ldots,\lambda_{N,t}),
\end{equation}
and $S_t\in M_N(\mathbb C)$ the complex matrix defined by
\begin{equation}\label{19042018E2}
	S_t = I_N + iD_t = \Diag(1+i\lambda_{1,t},\ldots,1+i\lambda_{N,t}).
\end{equation}
With these notations, we have
\begin{equation}\label{23042018E5}
	\forall X\in\mathbb R^N,\quad (\Reelle q_t)(P_tX) = \vert X\vert^2,\quad (\Imag q_t)(P_tX) = \langle D_tX,X\rangle,
\end{equation}
and therefore, $q_t\circ P_t$ is given by
\begin{equation}\label{23042018E6}
	\forall X\in\mathbb R^N,\quad q_t(P_tX) = \langle S_tX,X\rangle.
\end{equation}
Then, we compute thanks to the substitution rule and \eqref{23042018E6} that for all $\Xi\in\mathbb R^N$,
$$\widehat{e^{-q_t}}(\Xi) = \int_{\mathbb R^N}e^{-i\langle X,\Xi\rangle}e^{-q_t(X)}dX
= \vert \det P_t\vert\int_{\mathbb R^N}e^{-i\langle P_tX,\Xi\rangle}e^{-\langle S_tX,X\rangle}dX.$$
We observe that $S_t$ is a symmetric non-singular matrix satisfying that $\Reelle S_t\geq0$. It follows from \cite{MR1996773} (Theorem 7.6.1) that for all $\Xi\in\mathbb R^N$,
\begin{equation}\label{20062018E9}
	\widehat{e^{-q_t}}(\Xi) = \vert\det P_t\vert\ \frac{\pi^{\frac N2}}{(\det S_t)^{\frac12}}\ e^{-\frac14\langle S_t^{-1}P_t^T\Xi,P_t^T\Xi\rangle},
\end{equation}
where
$$(\det S_t)^{\frac12} = \prod_{j=1}^Ne^{\frac12\Log(1+i\lambda_{j,t})},$$
with $\Log$ the principal determination of the complex logarithm in $\mathbb C\setminus\mathbb R_-$. We consider $\Delta_t\in M_N(\mathbb R)$ the real diagonal matrix defined by
\begin{equation}\label{20062018E8}
	\Delta_t = \Reelle (S_t^{-1}) = \Diag\Big(\frac1{1+\lambda_{1,t}^2},\ldots,\frac{1}{1+\lambda_{N,t}^2}\Big).
\end{equation}
Since $P_t$ is a real matrix, we have that for all $\Xi\in\mathbb R^N$,
\begin{equation}\label{20062018E10}
	\big\vert e^{-\frac14\langle S_t^{-1}P_t^T\Xi,P_t^T\Xi\rangle}\big\vert = e^{-\frac14\langle\Delta_tP_t^T\Xi,P_t^T\Xi\rangle}.
\end{equation}
Moreover, both $P_t$ and $\Delta_t$ are non-degenerate, and it follows that for all $\Xi\in\mathbb R^N$,
\begin{equation}\label{20062018E11}
	\langle\Delta_tP_t^T\Xi,P_t^T\Xi\rangle = \vert\Delta_t^{\frac12}P^T_t\Xi\vert^2 
	\geq \big[\Vert(P^T_t)^{-1}\Vert^{-1}\Vert\Delta_t^{-\frac12}\Vert^{-1}\vert\Xi\vert\big]^2
	= \big[\Vert P^{-1}_t\Vert^{-1}\Vert\Delta_t^{-\frac12}\Vert^{-1}\vert\Xi\vert\big]^2.
\end{equation}
We deduce from \eqref{20062018E9}, \eqref{20062018E10} and \eqref{20062018E11} that for all $\Xi\in\mathbb R^N$,
\begin{equation}\label{20062018E12}
	\big\vert\widehat{e^{-q_t}}(\Xi)\big\vert\le\vert\det P_t\vert\ \frac{\pi^{\frac{N}{2}}}{\vert\det S_t\vert^{\frac12}}\
e^{-\frac14\big[\Vert P^{-1}_t\Vert^{-1}\Vert\Delta_t^{-\frac12}\Vert^{-1}\big]^2\vert\Xi\vert^2}.
\end{equation}
The following of the proof consists in bounding the time-dependent terms appearing in the right hand-side of \eqref{20062018E12}, that is
$\vert\det S_t\vert$, $\vert\det P_t\vert$, $\Vert P_t^{-1}\Vert$, and $\Vert\Delta_t^{-\frac12}\Vert$. \\[5pt]
\textbf{1.} It follows from \eqref{19042018E2} that the determinant of the time-dependent matrix $S_t$ is bounded from below in the following way:
\begin{equation}\label{13072018E1}
	\forall t\in(0,T^*],\quad \vert\det S_t\vert = \prod_{j=1}^N\vert1+i\lambda_{j,t}\vert\geq1.
\end{equation}
\textbf{2.} We notice from \eqref{11092017E1} and \eqref{23042018E5} that there exists a positive constant $c>0$ such that for all $0<t\le T^*$ and $X\in\mathbb R^N$,
$$\vert X\vert^2 = (\Reelle q_t)(P_tX)\geq ct^k\vert P_tX\vert^2.$$
Consequently, we obtain the following estimates of the norms of the matrices $P_t$:
$$\forall t\in(0,T^*],\quad \Vert P_t\Vert\le \frac 1{(ct^k)^{\frac 12}}.$$
It follows that there exists a positive constant $c_0>0$ such that
\begin{equation}\label{11092018E1}
	\forall t\in(0,T^*], \forall j,k\in\{1,\ldots,N\},\quad \big\vert (P_t)_{j,k}\big\vert\le \frac{c_0}{t^{\frac k2}},
\end{equation}
with $(P_t)_{j,k}$ the coefficients of the matrix $P_t$. We therefore deduce from \eqref{11092018E1} that for all $t\in(0,T^*]$,
$$\vert\det P_t\vert \le \sum_{\tau\in\mathfrak S_N}\vert\varepsilon(\tau)\vert\prod_{j=1}^N\big\vert(P_t)_{j,\tau(j)}\big\vert
\le\sum_{\tau\in\mathfrak{S}_N}\prod_{j=1}^N\frac{c_0}{t^{\frac k2}} = N!\left[\frac{c_0}{t^{\frac k2}}\right]^N,$$
where $\mathfrak{S}_N$ denotes the symmetric group and $\varepsilon(\tau)$ is the signature of the permutation $\tau\in\mathfrak{S}_N$. Setting $c_1 = N!\ c_0^N$, we proved that for all $t\in(0,T^*]$, 
\begin{equation}\label{18042018E5}
	\vert\det P_t\vert\le \frac{c_1}{t^{\frac{kN}2}}.
\end{equation}
\textbf{3.} The continuous dependence of the coefficients of the time-dependent quadratic form $\Reelle q_t$ with respect to the time variable $0\le t\le T$ implies that there exists a positive constant $c_2>0$ such that
\begin{equation}\label{12072018E1}
	\forall t\in[0,T^*], \forall X\in\mathbb R^N,\quad (\Reelle q_t)(X)\le c_2\vert X\vert^2.
\end{equation}
It follows from \eqref{23042018E5} and \eqref{12072018E1} that
$$\forall t\in(0,T^*], \forall X\in\mathbb R^N,\quad \vert X\vert^2 = (\Reelle q_t)(P_tX) \le c_2\vert P_tX\vert^2.$$
As a consequence, we have
\begin{equation}\label{18042018E6}
	\forall t\in(0,T^*],\quad \Vert P_t^{-1}\Vert\le c_2^{\frac12}.
\end{equation}
\textbf{4.} We deduce from \eqref{11092017E1} and \eqref{23042018E4} that for all $0<t\le T^*$ and $1\le j\le N$,
\begin{equation}\label{21062018E1}
	\vert\lambda_{j,t}\vert = \vert(\Imag q_t)(e_{j,t})\vert\le \Vert\Imag q_t\Vert\vert e_{j,t}\vert^2\le \frac{\Vert\Imag q_t\Vert}{ct^k}(\Reelle q_t)(e_{j,t}) = \frac{\Vert\Imag q_t\Vert}{ct^k}.
\end{equation}
Since the coefficients of the time-dependent quadratic form $q_t$ are continuous with respect to the time variable $0\le t\le T$, there exists a positive constant $c_3>0$ such that
\begin{equation}\label{21062018E2}
	\forall t\in[0,T^*],\quad \Vert \Imag q_t\Vert\le c_3.
\end{equation}
As a consequence of \eqref{21062018E1} and \eqref{21062018E2}, the following estimates hold:
\begin{equation}\label{19042018E3}
	\forall t\in(0,T^*], \forall j\in\{1,\ldots,N\},\quad \vert \lambda_{j,t}\vert\le \frac{c_3}{ct^k}.
\end{equation}
It follows from \eqref{20062018E8} and \eqref{19042018E3} that there exist some positive constants $0<t_0<\min(1,T^*)$ and $c_4,c_5>0$ such that
\begin{equation}\label{18042018E7}
	\forall t\in(0,t_0),\quad \Vert\Delta_t^{-\frac12}\Vert\le c_4\sum_{j=1}^N(1+\lambda^2_{j,t})^{\frac12}\le \frac{c_5}{t^k}.
\end{equation}
Finally, we deduce from \eqref{20062018E12}, \eqref{13072018E1}, \eqref{18042018E5}, \eqref{18042018E6} and \eqref{18042018E7} that for all $0<t<t_0$ and $\Xi\in\mathbb R^N$,
$$\big\vert\widehat{e^{-q_t}}(\Xi)\big\vert\le\frac{\pi^{\frac N2}c_1}{t^{\frac{kN}2}} e^{-t^{2k}\vert\Xi\vert^2/(2c_2c_5^2)}.$$
This ends the proof of Lemma \ref{18042018L2}.
\end{proof}

\begin{lem}\label{19042018L1} Let $T>0$ and $q_t:\mathbb R^N\rightarrow\mathbb{C}$ be a time-dependent complex-valued quadratic form satisfying \eqref{11092017E1}, and whose coefficients depend continuously on the time variable $0\le t\le T$. Then, there exist some positive constants $0<t_0<\min(1,T^*)$ and $C>1$ such that
$$\forall\alpha,\beta\in\mathbb N^N,\forall t\in(0,t_0),\quad \big\Vert X^{\alpha}\partial^{\beta}_X(e^{-q_t(X)})\big\Vert_{L^{\infty}(\mathbb R^N)}\le \frac{C^{1+\vert\alpha\vert+\vert\beta\vert}}{t^{\frac k2(\vert\alpha\vert+2\vert\beta\vert+s)}}\ (\alpha!)^{\frac12}\ (\beta!)^{\frac12},$$
where $0<T^*<T$ and $k\geq1$ are defined in \eqref{11092017E1}, and $s = 5N/4+2\lfloor N/2\rfloor+2$.
\end{lem}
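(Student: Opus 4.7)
The plan is to derive the pointwise bound from the explicit Gaussian form of $\widehat{e^{-q_t}}$ obtained in the proof of Lemma~\ref{18042018L2}, combined with Plancherel's identity and a Sobolev embedding. I would start by applying the Sobolev embedding $\|f\|_{L^\infty(\mathbb R^N)}\le C\|f\|_{H^r(\mathbb R^N)}$ with $r = \lfloor N/2\rfloor+1$ to $f = X^\alpha\partial^\beta_X e^{-q_t(\cdot)}$, which, via Plancherel, reduces the problem to estimating $L^2$ norms of the Fourier-side quantities $\Xi^\gamma\partial^\alpha_\Xi[\Xi^\beta\widehat{e^{-q_t}}(\Xi)]$ for $|\gamma|\le r$. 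Substituting the explicit expression $\widehat{e^{-q_t}}(\Xi) = A_t\,e^{-\frac{1}{4}\langle N_t\Xi,\Xi\rangle}$ from \eqref{20062018E9}, with $N_t = P_t S_t^{-1}P_t^T$, and expanding by the Leibniz rule, each such quantity is a polynomial of degree at most $|\alpha|+|\beta|+|\gamma|$ in $\Xi$ times this Gaussian. Differentiating the Gaussian produces Hermite-type polynomials whose coefficients are controlled by powers of $\|N_t\|^{1/2}$, and the corresponding $L^2$ integrals reduce, via the substitution $\eta = t^k\Xi$, to standard Gaussian moments $\int|\eta|^{2m}e^{-c|\eta|^2}d\eta \le C^m\Gamma(m+N/2)$.

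For the bookkeeping of the $t$-powers, I would rely on the quantitative bounds $|A_t|\le c_1 t^{-kN/2}$, $\|N_t\|\le Ct^{-k}$ and $\Reelle N_t\ge c\,t^{2k}$, all of them already established in the proof of Lemma~\ref{18042018L2}. These contribute: $t^{-kN/2}$ from $A_t$, $t^{-kN/2}$ from the $L^2$-width of the Gaussian, $t^{-k|\beta|}$ from the polynomial moments of order $|\beta|$, $t^{-k|\alpha|/2}$ from the Hermite coefficients of order $|\alpha|$, and $t^{-kr}$ from the $r$ Sobolev derivatives. The combinatorial content is then extracted from $\Gamma(m+N/2)\le C^m(m!)$, the classical inequality $((|\alpha|+|\beta|+|\gamma|)!)^{1/2}\le C^{|\alpha|+|\beta|+|\gamma|}(\alpha!)^{1/2}(\beta!)^{1/2}(\gamma!)^{1/2}$, and the uniform boundedness of $(\gamma!)^{1/2}$ for $|\gamma|\le r$, producing the expected factorial factors $(\alpha!)^{1/2}(\beta!)^{1/2}$.

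The main obstacle will be the uniform control of the multivariate Hermite-type polynomials arising when differentiating the complex Gaussian $e^{-\frac{1}{4}\langle N_t\Xi,\Xi\rangle}$: since $N_t$ is only complex-symmetric with real part of order $t^{2k}$, the classical real-variable Hermite bounds do not apply directly, and one has to establish bounds of the form $|\partial^\mu_\Xi e^{-\frac{1}{4}\langle N_t\Xi,\Xi\rangle}|\le C^{|\mu|}(\mu!)^{1/2}\|N_t\|^{|\mu|/2}(1+\|N_t\|^{1/2}|\Xi|)^{|\mu|}e^{-\frac{1}{4}\langle\Reelle N_t\,\Xi,\Xi\rangle}$ via the integral representation of these derivatives. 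The other technical point is the careful matching of all $t$-exponents in the end to the clean expression $\frac{k}{2}(|\alpha|+2|\beta|+s)$, which forces the specific constant $s = 5N/4+2\lfloor N/2\rfloor+2$ to absorb the various $N$-dependent baseline contributions coming from the Gaussian width, from the Sobolev embedding, and from Stirling-type asymptotics for the Gamma function.
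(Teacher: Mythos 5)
The paper's proof is quite short and takes a different route from yours. It derives two pointwise Gaussian bounds, $|e^{-q_t(X)}|\le e^{-ct^k|X|^2}$ directly from \eqref{11092017E1} and $|\widehat{e^{-q_t}}(\Xi)|\le c_1t^{-kN/2}e^{-c_2t^{2k}|\Xi|^2}$ from Lemma~\ref{18042018L2}, and then plugs them into the quantitative Gelfand--Shilov estimate of Proposition~\ref{28032018P1} with $C_1=1$, $c_1=ct^k$, $C_2=c_1t^{-kN/2}$, $c_2=c_2t^{2k}$. Proposition~\ref{28032018P1} is engineered precisely so that the complex Gaussian is never differentiated: the identity $\Vert x^\alpha\partial^\beta f\Vert_{L^2}^2=(-1)^{|\beta|}\langle f,\partial^\beta(x^{2\alpha}\partial^\beta f)\rangle$ combined with Leibniz and Cauchy--Schwarz decouples the monomial weights (controlled by the decay of $f$) from the derivatives (controlled, via Plancherel, by the decay of $\widehat f$ as moments $\Vert\xi^\gamma\widehat f\Vert_{L^2}$), so each factor carries one and only one of the two, and the $(\alpha!)^{1/2}(\beta!)^{1/2}$ comes from one-dimensional Gaussian moment integrals (Lemma~\ref{13072018L1}), not from Hermite polynomials.

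The obstacle you single out is not a technicality that can be deferred; I believe it is fatal to the argument as sketched. Here $N_t=P_tS_t^{-1}P_t^T$ satisfies $\Vert N_t\Vert\sim t^{-k}$ while $\Reelle N_t\gtrsim t^{2k}$, so the ratio $\Vert N_t\Vert/\lambda_{\min}(\Reelle N_t)\sim t^{-3k}$ is huge. If you estimate $\partial^\mu_\Xi e^{-\frac14\langle N_t\Xi,\Xi\rangle}$ by Cauchy's formula on a polydisc and optimize the radius $\rho\sim|\mu|^{1/2}\Vert N_t\Vert^{-1/2}$ to recover the factorial $(\mu!)^{1/2}$, the cross term $\Reelle\langle N_t\Xi,W\rangle$ produces an unavoidable penalty of the type $\exp(c\Vert N_t\Vert^{1/2}|\mu|^{1/2}|\Xi|)$, which against $e^{-\frac14\langle\Reelle N_t\Xi,\Xi\rangle}$ costs $\exp(C|\mu|\,t^{-3k})$ after completing the square -- exponentially worse than the polynomial power of $1/t$ the lemma allows. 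Cram\'er-type bounds, which yield $(\mu!)^{1/2}$ with undamaged Gaussian decay, use Hermite orthogonality for \emph{real} positive quadratic forms and genuinely fail once $\Imag N_t$ dominates $\Reelle N_t$, which is exactly the regime here. Moreover, even if your displayed bound held verbatim, the factor $(1+\Vert N_t\Vert^{1/2}|\Xi|)^{|\mu|}$, integrated against the width-$t^{-k}$ Gaussian, contributes $t^{-3k|\mu|/2}$, so the Hermite piece would actually cost $t^{-2k|\mu|}$ rather than the $t^{-k|\alpha|/2}$ in your tally; and your constant-in-$(\alpha,\beta)$ contribution $N+\lfloor N/2\rfloor+1$ (in units of $k$) already overshoots the lemma's $\tfrac{s}{2}=5N/8+\lfloor N/2\rfloor+1$ by $3N/8$. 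The way to rescue the Fourier-side idea is to observe that $\partial^\mu\widehat{e^{-q_t}}$ is the Fourier transform of $(iX)^\mu e^{-q_t}$ and to estimate it by the moments of $e^{-q_t}$ on the physical side, never by differentiating the Mehler Gaussian -- which, in substance, is what Proposition~\ref{28032018P1} does.
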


\begin{proof} First, we deduce from \eqref{11092017E1} that
\begin{equation}\label{18042018E8}
	\forall t\in[0,T^*], \forall X\in\mathbb R^N,\quad \big\vert e^{-q_t(X)}\big\vert\le e^{-ct^k\vert X\vert^2}.
\end{equation}
Moreover, it follows from Lemma \ref{18042018L2} that there exist some positive constants $0<t_0<\min(1,T^*)$ and $c_1,c_2>0$ such that
\begin{gather}\label{18042018E9}
	\forall t\in(0,t_0),\forall\Xi\in\mathbb R^n,\quad\big\vert\widehat{e^{-q_t}}(\Xi)\big\vert
	\le \frac{c_1}{t^{\frac{kN}2}}e^{-c_2t^{2k}\vert\Xi\vert^2}.
\end{gather}
We can assume without lost of generality that $ct_0^k<1$ and $c_2t_0^k<1$ so that $ct^k\in(0,1)$ and $c_2t^k\in(0,1)$ for all $0<t<t_0$. It follows from \eqref{18042018E8}, \eqref{18042018E9} and Proposition \ref{28032018P1} that there exists a positive constant $C>1$ such that for all $\alpha,\beta\in\mathbb N^N$ and $0<t<t_0$,
\begin{multline*}
	\big\Vert X^{\alpha}\partial^{\beta}_X(e^{-q_t(X)})\big\Vert_{L^{\infty}(\mathbb R^N)} \le C^{1+\vert\alpha\vert+\vert\beta\vert}
	\left[\frac1{(ct^k)^{\vert\alpha\vert+\frac N4}}\frac{c_1}{t^{\frac{kN}2}}\frac{1}{(c_2t^{2k})^{\vert\beta\vert+\frac N4+\lfloor\frac N2\rfloor+1}}\ \alpha!\ \beta!\right]^{\frac12} \\[5pt]
	\le \frac{c_1^{\frac12}\ C^{1+\vert\alpha\vert+\vert\beta\vert}}{c^{\frac12(\vert\alpha\vert + \frac N4)}c_2^{\frac12(\vert\beta\vert+\frac N4+\lfloor\frac N2\rfloor+1)}}
	\frac1{t^{\frac k2(\vert\alpha\vert+2\vert\beta\vert+\frac{5N}4+2\lfloor\frac N2\rfloor+2)}}\ (\alpha!)^{\frac12}\ (\beta!)^{\frac12}.
\end{multline*}
This ends the proof of Lemma \ref{19042018L1}.
\end{proof}

\subsection{The composition formula} This subsection is devoted to the  computation of the derivatives of the symbol $x^{\alpha}\ \sharp\ \xi^{\beta}\ \sharp\ e^{-q_t(x,\xi)}$
by using standard features of the Weyl calculus together with the Leibniz formula, where $(\alpha,\beta)\in\mathbb N^{2n}$, and $q_t:\mathbb R^n_x\times\mathbb R^n_{\xi}\rightarrow\mathbb C$ is a time-dependent complex-valued quadratic form, with $T>0$, satisfying \eqref{08052018E2} and \eqref{19042018E5}, and whose coefficients depend continuously on the time variable $0\le t\le T$.

\begin{lem}\label{20092017L1} Let $T>0$ and $q_t:\mathbb R^n_x\times\mathbb R^n_{\xi}\rightarrow\mathbb C$ be a time-dependent complex-valued quadratic form satisfying \eqref{08052018E2} and \eqref{19042018E5}, and whose coefficients depend continuously on the time variable $0\le t\le T$. For all $(a,b), (\alpha,\beta)\in\mathbb N^n\times\mathbb N^n$, we consider the finite subset $E_{a,b,\alpha,\beta}\subset\mathbb N^{5n}$ whose elements $(\eta,\rho,\gamma,a',b')\in E_{a,b,\alpha,\beta}$ satisfy 
\begin{equation}\label{20042018E11}
\begin{array}{ll}
	0\le a'\le a, & 0\le b'\le b, \\[5pt]
	\gamma+a'+\rho \le \alpha, & \gamma+b'+\eta \le \beta.
\end{array}
\end{equation}
Then, we have that for all $(a,b), (\alpha,\beta)\in\mathbb N^n\times\mathbb N^n$, $0< t\le T^*$ and $(x,\xi)\in\mathbb R^{2n}$,
\begin{multline*}
	\big\vert\partial^a_x\partial^b_{\xi}\big(x^{\alpha}\ \sharp\ \xi^{\beta}\ \sharp\ e^{-q_t(x,\xi)}\big)\big\vert \\[5pt]
	\le 2^{\vert a\vert + \vert b\vert}\sum_{(\eta,\rho,\gamma,a',b')\in E_{a,b,\alpha,\beta}}\Lambda_{\eta,\rho,\gamma,a',b'}\ \big\vert x^{\alpha-\gamma-a'-\rho}\xi^{\beta-\gamma-b'-\eta}\partial^{a-a'+\eta}_x\partial^{b-b'+\rho}_{\xi}\big(e^{-q_t(x,\xi)}\big)\big\vert,
\end{multline*}
where $0<T^*<T$ is defined in \eqref{08052018E2} and \eqref{19042018E5} and the $\Lambda_{\eta,\rho,\gamma,a',b'}>0$ are given by
$$\Lambda_{\eta,\rho,\gamma,a',b'} = \frac{\alpha!}{(\alpha-\gamma-a'-\rho)!}\frac{\beta!}{(\beta-\gamma-b'-\eta)!}\frac{1}{\eta!\ \rho!\ \gamma!}.$$
\end{lem}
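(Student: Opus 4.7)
The statement is a purely algebraic identity combined with a combinatorial bound on binomial coefficients; no analytical input enters. The core step is to expand $x^\alpha \sharp \xi^\beta \sharp e^{-q_t}$ explicitly via repeated applications of the Moyal product and the Leibniz rule, then reindex the sum. From the Moyal definition, the series terminates when one factor is linear, giving the elementary identities $x_j \sharp b = x_j b + \frac{i}{2} \partial_{\xi_j} b$ and $\xi_j \sharp b = \xi_j b - \frac{i}{2} \partial_{x_j} b$. Iterating by induction on $|\alpha|$ and $|\beta|$ yields
$$x^\alpha \sharp b = \sum_{\mu \le \alpha} \binom{\alpha}{\mu} \left(\frac{i}{2}\right)^{|\mu|} x^{\alpha-\mu} \partial_\xi^\mu b, \qquad \xi^\beta \sharp c = \sum_{\nu \le \beta} \binom{\beta}{\nu} \left(-\frac{i}{2}\right)^{|\nu|} \xi^{\beta-\nu} \partial_x^\nu c.$$

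Applying these in sequence (first to compute $\xi^\beta \sharp e^{-q_t}$, then to compose with $x^\alpha$) and using the Leibniz rule to expand $\partial_\xi^\mu[\xi^{\beta-\nu} \partial_x^\nu e^{-q_t}]$ produces a finite sum over $\nu \le \beta$, $\mu \le \alpha$, $\kappa \le \mu$, $\kappa \le \beta-\nu$, each summand being an explicit product of binomial coefficients and $(\pm i/2)^{\cdot}$ factors times $x^{\alpha-\mu} \xi^{\beta-\nu-\kappa} \partial_x^\nu \partial_\xi^{\mu-\kappa} e^{-q_t}$. Applying $\partial_x^a \partial_\xi^b$ and using the Leibniz rule once more in each variable introduces two further summation indices $a' \le a$ and $b' \le b$, with the supplementary conditions $a' \le \alpha-\mu$ and $b' \le \beta-\nu-\kappa$ needed for the monomial derivatives to be nonzero, giving terms proportional to $x^{\alpha-\mu-a'} \xi^{\beta-\nu-\kappa-b'} \partial_x^{a-a'+\nu} \partial_\xi^{b-b'+\mu-\kappa} e^{-q_t}$.

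The change of variables $\eta := \nu$, $\gamma := \kappa$, $\rho := \mu-\kappa$ then translates the index constraints into exactly those defining $E_{a,b,\alpha,\beta}$, and the factorial cancellations
$$\binom{\alpha}{\gamma+\rho}\binom{\gamma+\rho}{\gamma}\frac{(\alpha-\gamma-\rho)!}{(\alpha-\gamma-\rho-a')!} = \frac{\alpha!}{\gamma!\,\rho!\,(\alpha-\gamma-\rho-a')!}$$
together with its analog in $\beta$ reduce the prefactor, in absolute value, to
$$\frac{\binom{a}{a'}\binom{b}{b'}}{2^{|\eta|+|\rho|+|\gamma|}}\, \Lambda_{\eta,\rho,\gamma,a',b'}.$$
Since $\binom{a}{a'}\binom{b}{b'} \le 2^{|a|+|b|}$ and $2^{-(|\eta|+|\rho|+|\gamma|)} \le 1$, applying the triangle inequality to the finite sum gives the claimed bound.

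There is no analytical obstacle: the entire content is combinatorial bookkeeping. The only potentially delicate point is verifying that the three successive expansions (two Moyal contractions and one Leibniz step, followed by two more Leibniz steps when differentiating) combine coherently and that the reindexing $(\nu,\mu,\kappa) \mapsto (\eta,\rho,\gamma)$ realizes a bijection onto $E_{a,b,\alpha,\beta}$ while producing precisely the multiplicities encoded in $\Lambda_{\eta,\rho,\gamma,a',b'}$.
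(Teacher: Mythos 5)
Your proof is correct, and it reaches the same final estimate by a genuinely different decomposition. The paper invokes H\"ormander's composition formula (Theorem 18.5.4 of \cite{MR2304165}) twice: first to expand the polynomial $x^{\alpha}\ \sharp\ \xi^{\beta}$ as a finite sum indexed by $\gamma$, then a second time to compose that polynomial with $e^{-q_t}$, producing the indices $\eta,\rho$; the Leibniz rule applied to $\partial_x^a\partial_\xi^b$ supplies $a',b'$, and the derivatives of the polynomial $x^{\alpha}\ \sharp\ \xi^{\beta}$ are then written out to recombine $\gamma$ with $a'+\rho$ and $b'+\eta$. You instead peel off one $\sharp$-factor at a time using the elementary identities $x_j\ \sharp\ b = x_j b + \tfrac{i}{2}\partial_{\xi_j}b$ and $\xi_j\ \sharp\ c = \xi_j c - \tfrac{i}{2}\partial_{x_j}c$, then perform Leibniz steps inside the expansion, so your raw indices are $(\nu,\mu,\kappa,a',b')$ rather than $(\eta,\rho,\gamma,a',b')$. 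The substitution $\eta=\nu$, $\gamma=\kappa$, $\rho=\mu-\kappa$ is indeed a bijection onto $E_{a,b,\alpha,\beta}$ (the index constraints transform exactly into \eqref{20042018E11}), and the telescoping of binomial and falling-factorial quantities gives the prefactor $\binom{a}{a'}\binom{b}{b'}2^{-(|\eta|+|\rho|+|\gamma|)}\Lambda_{\eta,\rho,\gamma,a',b'}$ in absolute value, so the final bound via $\binom{a}{a'}\binom{b}{b'}\le 2^{|a|+|b|}$ matches the paper's. Your route buys a more self-contained argument that avoids quoting the general composition theorem, at the cost of slightly heavier reindexing; the paper's approach is arguably more modular, since it isolates the polynomial $x^{\alpha}\ \sharp\ \xi^{\beta}$ as a standalone object.
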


\begin{proof} Let $(\alpha,\beta)\in\mathbb N^{2n}$. We consider 
$$S = S\big(\langle(x,\xi)\rangle^{\vert \alpha\vert+\vert \beta\vert},\vert dx\vert^2+\vert d\xi\vert^2\big),$$
the symbol class of smooth functions $f\in C^{\infty}(\mathbb R^{2n})$ satisfying
$$\forall(a,b)\in\mathbb N^{2n}, \exists C>0,\forall(x,\xi)\in\mathbb R^{2n},\quad \vert\partial^a_x\partial^b_{\xi}f(x,\xi)\vert\le C\langle(x,\xi)\rangle^{\vert\alpha\vert+\vert\beta\vert}.$$ 
The Euclidean metric $\vert dx\vert^2+\vert d\xi\vert^2$ is admissible, that is, slowly varying, satisfying the uncertainty principle and temperate. The function $\langle(x,\xi)\rangle^{\vert\alpha\vert+\vert\beta\vert}$ is a $\big(\vert dx\vert^2+\vert d\xi\vert^2\big)$-slowly varying weight, see \cite{MR2599384} (Lemma 2.2.18). Therefore, $S$ is a symbol class with nice symbolic calculus. Clearly, $x^{\alpha},\xi^{\beta}\in S$, and we check that $e^{-q_t}\in S$ for all $0< t\le T^*$. It follows from a straightforward induction that for all $(a,b)\in\mathbb N^n\times\mathbb N^n$ and $t\in[0,T^*]$, there exists a time-dependent polynomial $P_{a,b,t}$ such that 
$$\forall (x,\xi)\in\mathbb R^n\times\mathbb R^n,\quad \partial^a_x\partial^b_{\xi}\big(e^{-q_t(x,\xi)}\big) = P_{a,b,t}(x,\xi)e^{-q_t(x,\xi)}.$$
We deduce from \eqref{19042018E5} that 
$$\forall(a,b)\in\mathbb N^n\times\mathbb N^n, \forall t\in[0,T^*], \forall(x,\xi)\in\mathbb R^n\times\mathbb R^n,\quad P_{a,b,t}(x,\xi) = P_{a,b,t}((x,\xi)_{I,J}),$$
where $(x,\xi)_{I,J}$ stands for the component in $\mathbb R^n_I\times\mathbb R^n_J$ of the vector $(x,\xi)\in\mathbb R^n\times\mathbb R^n$ according to the decomposition $\mathbb R^n\times\mathbb R^n = (\mathbb R^n_I\times\mathbb R^n_J)\oplus^{\perp}(\mathbb R^n_I\times\mathbb R^n_J)^{\perp}$, the orthogonality being taken with respect to the canonical Euclidean structure of $\mathbb R^n\times\mathbb R^n$. It follows from \eqref{08052018E2} that for all $(a,b)\in\mathbb N^n\times\mathbb N^n$, $0\le t\le T^*$ and $(x,\xi)\in\mathbb R^n$,
\begin{multline*}
	\big\vert\partial^a_x\partial^b_{\xi}\big(e^{-q_t(x,\xi)}\big)\big\vert = \big\vert P_{a,b,t}(x,\xi)e^{-q_t(x,\xi)}\big\vert \\
	= \vert P_{a,b,t}(x,\xi)\vert e^{-(\Reelle q_t)(x,\xi)}\le \vert P_{a,b,t}((x,\xi)_{I,J})\vert e^{-ct^k\vert(x,\xi)_{I,J}\vert^2}.
\end{multline*}
Since $k\geq1$, this implies that
$$\forall (a,b)\in\mathbb N^n\times\mathbb N^n, \forall t\in(0,T^*], \quad \big\Vert\partial^a_x\partial^b_{\xi}\big(e^{-q_t(x,\xi)}\big)\big\Vert_{L^{\infty}(\mathbb R^{2n})}<+\infty,$$
that is $e^{-q_t}\in C^{\infty}_b(\mathbb R^{2n})\subset S$ for all $0< t\le T^*$. In the following, we fix $0< t\le T^*$. Since $x^{\alpha}$ and $\xi^{\beta}$ are polynomials belonging to the symbol class $S$, it follows from \cite{MR2304165} (Theorem 18.5.4) that
\begin{align}\label{01092017e2}
	x^{\alpha}\ \sharp\ \xi^{\beta} 
	& = \sum_{l=0}^{\vert\min(\alpha,\beta)\vert}\left(\frac{i}{2}\right)^l\sum_{\vert\delta\vert+\vert\gamma\vert = l}\frac{(-1)^{\vert\gamma\vert}}{\delta!\ \gamma!}\partial^{\gamma}_x\partial^{\delta}_{\xi}\big(x^{\alpha}\big)\ \partial^{\delta}_x\partial^{\gamma}_{\xi}\big(\xi^{\beta}\big) \\[5pt]
	& = \sum_{l=0}^{\vert\min(\alpha,\beta)\vert}\left(\frac{i}{2}\right)^l\sum_{\vert\gamma\vert = l}\frac{(-1)^{\vert\gamma\vert}}{\gamma!}\frac{\alpha!\ x^{\alpha-\gamma}}{(\alpha-\gamma)!}\frac{\beta!\ \xi^{\beta-\gamma}}{(\beta-\gamma)!}\ \mathbbm{1}_{\gamma\le\min(\alpha,\beta)}, \nonumber
\end{align}
where $\min(\alpha,\beta)\in\mathbb N^n$ is defined for all $j\in\{1,\ldots,n\}$ by $\min(\alpha,\beta)_j = \min(\alpha_j,\beta_j)$ and
$$\mathbbm{1}_{\gamma\le\min(\alpha,\beta)} = \left\{\begin{array}{cc}
	1 & \text{if $\gamma\le\min(\alpha,\beta)$}, \\[5pt]
	0 & \text{otherwise}.
\end{array}\right.$$
As a consequence of \eqref{01092017e2}, $x^{\alpha}\ \sharp\ \xi^{\beta}$ is a polynomial of total degree $\vert\alpha\vert+\vert\beta\vert$, and since $e^{-q_t}\in S$, we deduce from \cite{MR2304165} (Theorem 18.5.4) an explicit formula for the symbol $x^{\alpha}\ \sharp\ \xi^{\beta}\ \sharp\ e^{-q_t(x,\xi)}$: 
\begin{equation}\label{12122017E3}
	x^{\alpha}\ \sharp\ \xi^{\beta}\ \sharp\ e^{-q_t(x,\xi)} = \sum_{k=0}^{\vert\alpha\vert+\vert\beta\vert}\left(\frac{i}{2}\right)^k\sum_{\vert\eta\vert+\vert\rho\vert = k}\frac{(-1)^{\vert\rho\vert}}{\eta! \rho!}\partial^{\rho}_x\partial^{\eta}_{\xi}\big(x^{\alpha}\ \sharp\ \xi^{\beta}\big)\ \partial^{\eta}_x\partial^{\rho}_{\xi}\big(e^{-q_t(x,\xi)}\big).
\end{equation}
Let $(a,b)\in\mathbb N^n\times\mathbb N^n$. It follows from \eqref{12122017E3} that for all $(x,\xi)\in\mathbb R^{2n}$,
$$\partial^a_x\partial^b_{\xi}\big(x^{\alpha}\ \sharp\ \xi^{\beta}\ \sharp\ e^{-q_t(x,\xi)}\big)
= \sum_{k=0}^{\vert\alpha\vert + \vert\beta\vert}\left(\frac{i}{2}\right)^k\sum_{\vert\eta\vert+\vert\rho\vert = k}\frac{(-1)^{\vert\rho\vert}}{\eta! \rho!}
\partial^a_x\partial^b_{\xi}\big(\partial^{\rho}_x\partial^{\eta}_{\xi}\big(x^{\alpha}\ \sharp\ \xi^{\beta}\big)\ \partial^{\eta}_x\partial^{\rho}_{\xi}\big(e^{-q_t(x,\xi)}\big)\big).$$
As a consequence of the Leibniz formula, this equality also writes as 
\begin{multline}\label{23042018E8}
	\partial^a_x\partial^b_{\xi}\big(x^{\alpha}\ \sharp\ \xi^{\beta}\ \sharp\ e^{-q_t(x,\xi)}\big)
	= \sum_{k=0}^{\vert\alpha\vert + \vert\beta\vert}\left(\frac i2\right)^k\sum_{\vert\eta\vert+\vert\rho\vert = k}\frac{(-1)^{\vert\rho\vert}}{\eta! \rho!} \\
	\sum_{a'\le a}\sum_{b'\le b}
	\binom a{a'}\binom b{b'}\partial^{a'+\rho}_x\partial^{b'+\eta}_{\xi}\big(x^{\alpha}\ \sharp\ \xi^{\beta}\big)\ \partial^{a-a'+\eta}_x\partial^{b-b'+\rho}_{\xi}\big(e^{-q_t(x,\xi)}\big).
\end{multline}
Moreover, we notice from \eqref{01092017e2} that the derivatives of the symbol $x^{\alpha}\ \sharp\ \xi^{\beta}$ are given for all $(x,\xi)\in\mathbb R^n\times\mathbb R^n$ by
\begin{align}\label{23042018E10}
	&\ \partial^{a'+\rho}_x\partial^{b'+\eta}_{\xi}(x^{\alpha}\ \sharp\ \xi^{\beta}) \\
	= &\ \sum_{l=0}^{\vert\min(\alpha,\beta)\vert}\left(\frac{i}{2}\right)^l\sum_{\vert\gamma\vert = l}\frac{(-1)^{\vert\gamma\vert}}{\gamma!}\frac{\alpha!}{(\alpha-\gamma)!}\frac{\beta!}{(\beta-\gamma)!}\partial^{a'+\rho}_x\big(x^{\alpha-\gamma}\big)\ \partial^{b'+\eta}_{\xi}\big(\xi^{\beta-\gamma}\big)\ \mathbbm{1}_{\gamma\le\min(\alpha,\beta)} \nonumberÊ\\
	= &\ \sum_{l=0}^{\vert\min(\alpha,\beta)\vert}\left(\frac{i}{2}\right)^l\sum_{\vert\gamma\vert = l}\frac{(-1)^{\vert\gamma\vert}}{\gamma!}\frac{\alpha!\ x^{\alpha-\gamma-a'-\rho}}{(\alpha-\gamma-a'-\rho)!}\frac{\beta!\ \xi^{\beta-\gamma-b'-\eta}}{(\beta-\gamma-b'-\eta)!}\mathbbm{1}_{\gamma+a'+\rho \le \alpha}\ \mathbbm{1}_{\gamma+b'+\eta \le \beta}, \nonumber
\end{align}
with
$$\mathbbm1_{\gamma+a'+\rho \le \alpha} = \left\{\begin{array}{cc}
	1 & \text{if $\gamma+a'+\rho \le \alpha$}, \\[5pt]
	0 & \text{otherwise},
\end{array}\right.\quad\text{and}\quad\mathbbm{1}_{\gamma+b'+\eta\le \beta} = \left\{\begin{array}{cc}
	1 & \text{if $\gamma+b'+\eta\le \beta$}, \\[5pt]
	0 & \text{otherwise}.
\end{array}\right.$$
We consider the finite subset $E_{a,b,\alpha,\beta}\subset\mathbb N^{5n}$ whose elements $(\eta,\rho,\gamma,a',b')\in E_{a,b,\alpha,\beta}$ satisfy:
$$\begin{array}{llll}
	0\le a'\le a, & 0\le b'\le b, && \\[5pt]
	\gamma+a'+\rho \le \alpha, & \gamma+b'+\eta \le \beta. &&
\end{array}$$
Moreover, for all $(\eta,\rho,\gamma,a',b')\in E_{a,b,\alpha,\beta}$, we set
$$\Gamma_{\eta,\rho,\gamma,a',b'} = \left(\frac{i}{2}\right)^{\vert\eta\vert + \vert\rho\vert + \vert\gamma\vert}\binom a{a'}\binom b{b'}\frac{\alpha!}{(\alpha-\gamma-a'-\rho)!}\frac{\beta!}{(\beta-\gamma-b'-\eta)!}\frac{(-1)^{\vert\rho\vert+\vert\gamma\vert}}{\eta!\ \rho!\ \gamma!}.$$
It follows from \eqref{23042018E8} and \eqref{23042018E10} that the derivatives of the symbol $x^{\alpha}\ \sharp\ \xi^{\beta}\ \sharp\ e^{-q_t(x,\xi)}$ satisfy the following estimates
\begin{multline}\label{21062019E4}
	\big\vert\partial^a_x\partial^b_{\xi}\big(x^{\alpha}\ \sharp\ \xi^{\beta}\ \sharp\ e^{-q_t(x,\xi)}\big)\big\vert \\[5pt]
	\le \sum_{(\eta,\rho,\gamma,a',b')\in E_{a,b,\alpha,\beta}}\vert\Gamma_{\eta,\rho,\gamma,a',b'}\vert\ \big\vert x^{\alpha-\gamma-a'-\rho}\xi^{\beta-\gamma-b'-\eta}\partial^{a-a'+\eta}_x\partial^{b-b'+\rho}_{\xi}\big(e^{-q_t(x,\xi)}\big)\big\vert.
\end{multline}
By using \eqref{20062018E1}, we finally notice that
\begin{equation}\label{23042018E9}
	\vert\Gamma_{\eta,\rho,\gamma,a',b'}\vert\le2^{\vert a\vert+\vert b\vert}\ \frac{\alpha!}{(\alpha-\gamma-a'-\rho)!}\frac{\beta!}{(\beta-\gamma-b'-\eta)!}\frac1{\eta!\ \rho!\ \gamma!}.
\end{equation}
Lemma \ref{20092017L1} is then a consequence of \eqref{21062019E4} and \eqref{23042018E9}.
\end{proof}

\subsection{Proof of Theorem \ref{18042018T1}} This subsection is devoted to the proof of Theorem \ref{18042018T1}. Let $T>0$ and $q_t:\mathbb R^{2n}\rightarrow\mathbb C$ be a time-dependent complex-valued quadratic form satisfying \eqref{08052018E2} and \eqref{19042018E5}, and whose coefficients depend continuously on the time variable $0\le t\le T$. In order to apply the Calder\'on-Vaillancourt theorem, we estimate the following norms for all $(a,b)\in\mathbb N^n\times\mathbb N^n$, $(\alpha,\beta)\in\mathbb N^n_I\times\mathbb N^n_J$ and $0< t\ll1$,
\begin{equation}\label{20062018E5}
	\big\Vert\partial^a_x\partial^b_{\xi}\big(x^{\alpha}\ \sharp\ \xi^{\beta}\ \sharp\ e^{-q_t(x,\xi)}\big)\big\Vert_{L^{\infty}(\mathbb R^{2n})},
\end{equation}
where $I,J\subset\{1,\ldots,n\}$ are defined in \eqref{08052018E2} and \eqref{19042018E5}. \\[5pt]
\textbf{1.} Since $q_t$ satisfies \eqref{19042018E5}, $q_t$ only depends on the variable of $\mathbb R^n_I\times\mathbb R^n_J$, that is
\begin{equation}\label{20062018E6}
	\forall t\in[0,T^*], \forall (x,\xi)\in\mathbb R^n\times\mathbb R^n,\quad q_t(x,\xi) = q_t((x,\xi)_{I,J}),
\end{equation}
with $0<T^*<T$ and where $(x,\xi)_{I,J}$ denotes the component in $\mathbb R^n_I\times\mathbb R^n_J$ of the vector $(x,\xi)\in\mathbb R^n\times\mathbb R^n$ according to the decomposition $\mathbb R^n\times\mathbb R^n = (\mathbb R^n_I\times\mathbb R^n_J)\oplus^{\perp}(\mathbb R^n_I\times\mathbb R^n_J)^{\perp}$, the orthogonality being taken with respect to the canonical Euclidean structure of $\mathbb R^n\times\mathbb R^n$. Moreover, it follows from \eqref{08052018E2} that the time-dependent quadratic form $\Reelle q_t$ enjoys the following coercivity estimate
$$\forall t\in[0,T^*], \forall (x,\xi)\in\mathbb R^n_I\times\mathbb R^n_J,\quad (\Reelle q_t)(x,\xi)\geq ct^k\vert(x,\xi)\vert^2,$$
with $c>0$ and $k\geq1$. Thus, we deduce from Lemma \ref{19042018L1} (by choosing a bijection between $\mathbb R^n_I\times\mathbb R^n_J$ and $\mathbb R^{\#I+\#J}$, $\#$ denoting the cardinality) that there exist some positive constants $0<t_0<\min(1,T^*)$ and $C_1>1$ such that for all $(\delta_1,\mu_1),(\delta_2,\mu_2)\in\mathbb N^n_I\times\mathbb N^n_J$, $0<t\le t_0$ and $(x,\xi)\in\mathbb R^n_I\times\mathbb R^n_J$,
\begin{equation}\label{23042018E13}
	\big\vert x^{\delta_1}\xi^{\mu_1}\partial^{\delta_2}_x\partial^{\mu_2}_{\xi}\big(e^{-q_t(x,\xi)}\big)\big\vert
\le\frac{C_1^{1+\vert\delta_1\vert+\vert\delta_2\vert+\vert\mu_1\vert + \vert\mu_2\vert}}{t^{\frac k2(\vert\delta_1\vert+\vert\mu_1\vert+2\vert\delta_2\vert + 2\vert\mu_2\vert + s)}}
((\delta_1)!\ (\delta_2)!\ (\mu_1)!\ (\mu_2)!)^{\frac12},
\end{equation}
where $$s = 5(2n)/4 + 2\lfloor (2n)/2\rfloor + 2 = 9n/2+2.$$ 
Moreover, it follows from \eqref{20062018E6} that \eqref{23042018E13} can be extended to all $(\delta_1,\mu_1)\in\mathbb N^n_I\times\mathbb N^n_J$, $(\delta_2,\mu_2)\in\mathbb N^{2n}$, $0<t\le t_0$ and $(x,\xi)\in\mathbb R^{2n}$,
since
$$\forall(\delta_1,\mu_1)\in\mathbb N^n_I\times\mathbb N^n_J,\forall(x,\xi)\in\mathbb R^{2n},\quad (x,\xi)^{(\delta_1,\mu_1)} = \left((x,\xi)_{I,J}\right)^{(\delta_1,\mu_1)}.$$
On the other hand, it follows from Lemma \ref{20092017L1} that for all $(a,b)\in\mathbb N^{2n}$ and $(\alpha,\beta)\in\mathbb N^n_I\times\mathbb N^n_J$, there exists a finite subset $E_{a,b,\alpha,\beta}\subset\mathbb N^{5n}$, whose elements $(\eta,\rho,\gamma,a',b')\in E_{a,b,\alpha,\beta}$ satisfy \eqref{20042018E11}, such that for all $0\le t\le T^*$ and $(x,\xi)\in\mathbb R^{2n}$,
\begin{multline}\label{20042018E1}
	\big\vert\partial^a_x\partial^b_{\xi}\big(x^{\alpha}\ \sharp\ \xi^{\beta}\ \sharp\ e^{-q_t(x,\xi)}\big)\big\vert \\[5pt]
	\le 2^{\vert a\vert + \vert b\vert}\sum_{(\eta,\rho,\gamma,a',b')\in E_{a,b,\alpha,\beta}}\Lambda_{\eta,\rho,\gamma,a',b'}\ \big\vert x^{\alpha-\gamma-a'-\rho}\xi^{\beta-\gamma-b'-\eta}\partial^{a-a'+\eta}_x\partial^{b-b'+\rho}_{\xi}\big(e^{-q_t(x,\xi)}\big)\big\vert,
\end{multline}
where
\begin{equation}\label{20062018E7}
	\Lambda_{\eta,\rho,\gamma,a',b'} = \frac{\alpha!}{(\alpha-\gamma-a'-\rho)!}\frac{\beta!}{(\beta-\gamma-b'-\eta)!}\frac{1}{\eta!\ \rho!\ \gamma!}.
\end{equation}
As a consequence of  \eqref{23042018E13}, we have that for all $(a,b)\in\mathbb N^{2n}$, $(\alpha,\beta)\in\mathbb N^n_I\times\mathbb N^n_J$, $(\eta,\rho,\gamma,a',b')\in E_{a,b,\alpha,\beta}$, $0<t<t_0$ and $(x,\xi)\in\mathbb R^{2n}$,
\begin{multline*}
	\big\vert x^{\alpha-\gamma-a'-\rho}\xi^{\beta-\gamma-b'-\eta}\partial^{a-a'+\eta}_x\partial^{b-b'+\rho}_{\xi}\big(e^{-q_t(x,\xi)}\big)\big\vert \\[5pt]
	\le \frac{C_1^{1+\vert\alpha-\gamma-a'-\rho\vert + \vert\beta-\gamma-b'-\eta\vert + \vert a-a'+\eta\vert + \vert b-b'+\rho\vert}}
	{t^{\frac{k}{2}(\vert\alpha-\gamma-a'-\rho\vert+\vert\beta-\gamma-b'-\eta\vert+2\vert a-a'+\eta\vert+2\vert b-b'+\rho\vert + s)}} \\[5pt]
	\left((\alpha-\gamma-a'-\rho)!\ (\beta-\gamma-b'-\eta)!\ (a-a'+\eta)!\ (b-b'+\rho)!\right)^{\frac12},
\end{multline*}
since it follows from \eqref{20042018E11} that for all $(a,b)\in\mathbb N^{2n}$, $(\alpha,\beta)\in\mathbb N^n_I\times\mathbb N^n_J$ and $(\eta,\rho,\gamma,a',b')\in E_{a,b,\alpha,\beta}$,
$$\alpha-\gamma-a'-\rho\in\mathbb N^n_I\quad\text{and}\quad\beta-\gamma-b'-\eta\in\mathbb N^n_J.$$
We deduce from the previous inequality that
for all $(a,b)\in\mathbb N^{2n}$, $(\alpha,\beta)\in\mathbb N^n_I\times\mathbb N^n_J$, $(\eta,\rho,\gamma,a',b')\in E_{a,b,\alpha,\beta}$, $0<t<t_0$ and $(x,\xi)\in\mathbb R^{2n}$,
\begin{multline*}
	\big\vert x^{\alpha-\gamma-a'-\rho}\xi^{\beta-\gamma-b'-\eta}\partial^{a-a'+\eta}_x\partial^{b-b'+\rho}_{\xi}\big(e^{-q_t(x,\xi)}\big)\big\vert \\[5pt]
	\le \frac{C_1^{1+2\vert\alpha\vert + 2\vert\beta\vert + 2\vert a\vert + 2\vert b\vert}}
	{t^{k(\vert\alpha\vert + \vert\beta\vert + \vert a\vert + \vert b\vert + s/2)}}
	\left((\alpha-\gamma-a'-\rho)!\ (\beta-\gamma-b'-\eta)!\ (a+\eta)!\ (b+\rho)!\right)^{\frac12},
\end{multline*}
since $C_1>1$, $0<t_0<1$, and from \eqref{20042018E11},
\begin{multline*}
	\vert\alpha-\gamma-a'-\rho\vert + \vert\beta-\gamma-b'-\eta\vert + 2\vert a-a'+\eta\vert + 2\vert b-b'+\rho\vert \\[5pt]
	= \vert\alpha-\gamma-a'\vert + \vert\beta-\gamma-b'\vert + \vert\rho\vert+\vert\eta\vert + 2\vert a-a'\vert + 2\vert b-b'\vert 
	\le 2\vert\alpha\vert + 2\vert\beta\vert + 2\vert a\vert + 2\vert b\vert.
\end{multline*}
Moreover, as a consequence of \eqref{20042018E11} and \eqref{22102017E1}, we have that for all $(a,b)\in\mathbb N^{2n}$, $(\alpha,\beta)\in\mathbb N^n_I\times\mathbb N^n_J$ and $(\eta,\rho,\gamma,a',b')\in E_{a,b,\alpha,\beta}$,
$$(a+\eta)!\le 2^{\vert a\vert + \vert\eta\vert}a!\ \eta!\le 2^{\vert a\vert + \vert \beta\vert}a!\ \eta!,$$
and
$$(b+\rho)!\le 2^{\vert b\vert + \vert\alpha\vert}b!\ \rho!.$$
This implies the following estimate for all $(a,b)\in\mathbb N^{2n}$, $(\alpha,\beta)\in\mathbb N^n_I\times\mathbb N^n_J$, $(\eta,\rho,\gamma,a',b')\in E_{a,b,\alpha,\beta}$, $0<t<t_0$ and $(x,\xi)\in\mathbb R^{2n}$,
\begin{multline}\label{20042018E6}
	\big\vert x^{\alpha-\gamma-a'-\rho}\xi^{\beta-\gamma-b'-\eta}\partial^{a-a'+\eta}_x\partial^{b-b'+\rho}_{\xi}\big(e^{-q_t(x,\xi)}\big)\big\vert \\[5pt]
	\le \frac{2^{\frac12(\vert a\vert + \vert b\vert + \vert\alpha\vert+\vert\beta\vert)}C_1^{1+2\vert\alpha\vert + 2\vert\beta\vert + 2\vert a\vert + 2\vert b\vert}}{t^{k(\vert\alpha\vert+\vert\beta\vert+\vert a\vert+\vert b\vert + s/2)}}
	\left((\alpha-\gamma-a'-\rho)!\ (\beta-\gamma-b'-\eta)!\ \eta!\ \rho!\ a!\ b!\right)^{\frac12}.
\end{multline}
As a consequence of \eqref{20042018E1} and \eqref{20042018E6}, we obtain a first estimate of the norms \eqref{20062018E5}: there exists a positive constant $C_2>1$ such that for all $(a,b)\in\mathbb N^n\times\mathbb N^n$, $(\alpha,\beta)\in\mathbb N^n_I\times\mathbb N^n_J$ and $0<t<t_0$,
\begin{multline*}
	\big\Vert\partial^a_x\partial^b_{\xi}\big(x^{\alpha}\ \sharp\ \xi^{\beta}\ \sharp\ e^{-q_t(x,\xi)}\big)\big\Vert_{L^{\infty}(\mathbb R^{2n})}
	\le \frac{C_2^{1+\vert \alpha\vert + \vert \beta\vert + \vert a\vert + \vert b\vert}}{t^{k(\vert\alpha\vert+\vert\beta\vert+\vert a\vert+\vert b\vert + s/2)}} \\
	\sum_{(\eta,\rho,\gamma,a',b')\in E_{a,b,\alpha,\beta}}\Lambda_{\eta,\rho,\gamma,a',b'}\left((\alpha-\gamma-a'-\rho)!\ (\beta-\gamma-b'-\eta)!\ \eta!\ \rho!\ a!\ b!\right)^{\frac12}.
\end{multline*}
Moreover, we notice from \eqref{20062018E7} that
\begin{multline*}
	\Lambda_{\eta,\rho,\gamma,a',b'}\left((\alpha-\gamma-a'-\rho)!\ (\beta-\gamma-b'-\eta)!\ \eta!\ \rho!\ a!\ b!\right)^{\frac12} \\[5pt]
	= \frac{\alpha!}{((\alpha-\gamma-a'-\rho)!)^{\frac12}}\frac{\beta!}{((\beta-\gamma-b'-\eta)!)^{\frac12}}\frac{1}{(\eta!)^{\frac12}(\rho!)^{\frac12}\gamma!}\ (a!)^{\frac12}\ (b!)^{\frac12}.
\end{multline*}
We therefore deduce the following inequality for all $(a,b)\in\mathbb N^n\times\mathbb N^n$, $(\alpha,\beta)\in\mathbb N^n_I\times\mathbb N^n_J$ and $0<t\le t_0$,
\begin{multline}\label{20062018E3}
	\big\Vert\partial^a_x\partial^b_{\xi}\big(x^{\alpha}\ \sharp\ \xi^{\beta}\ \sharp\ e^{-q_t(x,\xi)}\big)\big\Vert_{L^{\infty}(\mathbb R^{2n})}
	\le \frac{C_2^{1+\vert \alpha\vert + \vert \beta\vert + \vert a\vert + \vert b\vert}}{t^{k(\vert\alpha\vert+\vert\beta\vert+\vert a\vert+\vert b\vert + s/2)}} \\[5pt]
	(a!)^{\frac12}\ (b!)^{\frac12}\sum_{(\eta,\rho,\gamma,a',b')\in E_{a,b,\alpha,\beta}}\frac{\alpha!}{((\alpha-\gamma-a'-\rho)!)^{\frac12}}\frac{\beta!}{((\beta-\gamma-b'-\eta)!)^{\frac12}}\frac1{(\eta!)^{\frac12}(\rho!)^{\frac12}\gamma!}.
\end{multline}
\textbf{2.} Let $(a,b)\in\mathbb N^n\times\mathbb N^n$ and $(\alpha,\beta)\in\mathbb N^n_I\times\mathbb N^n_J$. We now derive an estimate of the quantity
$$\Delta_{\eta,\rho,\gamma,a',b'} = \frac{\alpha!}{((\alpha-\gamma-a'-\rho)!)^{\frac12}}\frac{\beta!}{((\beta-\gamma-b'-\eta)!)^{\frac12}}\frac{1}{(\eta!)^{\frac12}(\rho!)^{\frac12}\gamma!},\quad (\eta,\rho,\gamma,a',b')\in E_{a,b,\alpha,\beta},$$
appearing in \eqref{20062018E3}. First, we deduce from \eqref{20042018E11} and \eqref{22102017E1} the following inequalities for all $(\eta,\rho,\gamma,a',b')\in E_{a,b,\alpha,\beta}$,
\begin{multline*}
	(\alpha-\gamma-\rho)! = (\alpha-\gamma-\rho-a'+a')! \le 2^{\vert \alpha-\gamma-\rho\vert}\ (\alpha-\gamma-\rho-a')!\ (a')! \\[5pt]
	\le 2^{\vert \alpha\vert}\ (\alpha-\gamma-\rho-a')!\ a!,
\end{multline*}
and
$$(\beta-\gamma-\eta)! \le 2^{\vert \beta\vert}\ (\beta-\gamma-\eta-b')!\ b!.$$
As a consequence, we have that for all $(\eta,\rho,\gamma,a',b')\in E_{a,b,\alpha,\beta}$,
$$\Delta_{\eta,\rho,\gamma,a',b'}\le \frac{\alpha!}{((\alpha-\gamma-\rho)!)^{\frac12}}\frac{\beta!}{((\beta-\gamma-\eta)!)^{\frac12}}\frac{1}{(\eta!)^{\frac12}(\rho!)^{\frac12}\gamma!}\ 2^{\frac12\vert \alpha + \beta\vert}\ (a!)^{\frac12}\ (b!)^{\frac12}.$$
Moreover, it follows from the definition of the Binomial coefficients that for all $(\eta,\rho,\gamma,a',b')\in E_{a,b,\alpha,\beta}$,
\begin{align*}
	 & \frac{\alpha!}{((\alpha-\gamma-\rho)!)^{\frac12}}\frac{\beta!}{((\beta-\gamma-\eta)!)^{\frac12}}\frac{1}{(\eta!)^{\frac12}(\rho!)^{\frac12}\gamma!} \\[5pt]
	= &\ \left[\frac{\alpha!}{(\alpha-\gamma-\rho)!\ (\gamma+\rho)!}\ \frac{(\gamma+\rho)!}{\rho!\ \gamma!}\ \frac{\beta!}{(\beta-\gamma-\eta)!\ (\gamma+\eta)!}\ \frac{(\gamma+\eta)!}{\eta!\ \gamma!}\right]^{\frac12}\ (\alpha!)^{\frac12}\ (\beta!)^{\frac12} \\[5pt]
	= &\ \left[\binom{\alpha}{\gamma+\rho}\binom{\gamma+\rho}{\gamma}\binom{\beta}{\gamma+\eta}\binom{\gamma+\eta}{\eta}\right]^{\frac12}\ (\alpha!)^{\frac12}\ (\beta!)^{\frac12},
\end{align*}
and we deduce from this equality and \eqref{20062018E1} that
$$\Delta_{\eta,\rho,\gamma,a',b'} \le \left[2^{\vert \alpha\vert + \vert \gamma\vert+\vert\rho\vert + \vert \beta\vert + \vert\gamma\vert + \vert\eta\vert}\ 2^{\vert \alpha\vert + \vert \beta\vert}\right]^{\frac12}\ (a!)^{\frac12}\ (b!)^{\frac12}\ (\alpha!)^{\frac12}\ (\beta!)^{\frac12}.$$
As a consequence of \eqref{20042018E11}, each element $(\eta,\rho,\gamma,a',b')$ of $E_{a,b,\alpha,\beta}$ satisfies
$$\vert \gamma\vert+\vert\rho\vert\le\vert\alpha\vert\quad \text{and}\quad \vert\gamma\vert + \vert\eta\vert\le\vert\beta\vert,$$
and thus, the following inequality holds for all $(\eta,\rho,\gamma,a',b')\in E_{a,b,\alpha,\beta}$,
\begin{multline}\label{20042018E4}
	\Delta_{\eta,\rho,\gamma,a',b'} 
	\le \left[2^{\vert \alpha\vert + \vert\alpha\vert + \vert \beta\vert + \vert\beta\vert}\ 2^{\vert \alpha\vert + \vert \beta\vert}\right]^{\frac12}\ (a!)^{\frac12}\ (b!)^{\frac12}\ (\alpha!)^{\frac12}\ (\beta!)^{\frac12} \\[5pt]
	= (2\sqrt{2})^{\vert\alpha\vert+\vert\beta\vert}\ (a!)^{\frac12}\ (b!)^{\frac12}\ (\alpha!)^{\frac12}\ (\beta!)^{\frac12}.
\end{multline}
\textbf{3.} Let $(a,b)\in\mathbb N^n\times\mathbb N^n$ and $(\alpha,\beta)\in\mathbb N^n_I\times\mathbb N^n_J$. The last quantity we need to control is the cardinality of $E_{a,b,\alpha,\beta}$, denoted $\vert E_{a,b,\alpha,\beta}\vert$. It follows from \eqref{20042018E11} that for all $(\eta,\rho,\gamma,a',b')\in E_{a,b,\alpha,\beta}$,
$$\begin{array}{lll}
	\vert\eta\vert\le\vert\beta\vert, & \vert\rho\vert\le\vert\alpha\vert, & \vert\gamma\vert\le\min(\vert\alpha\vert,\vert\beta\vert), \\[10pt]
	\vert a'\vert\le\vert a\vert, & \vert b'\vert\le\vert b\vert, &
\end{array}$$
and by using \eqref{20062018E1} and \eqref{24042018E9}, we deduce that $\vert E_{a,b,\alpha,\beta}\vert$ satisfies the following estimate:
\begin{multline}\label{20042018E5}
	\vert E_{a,b,\alpha,\beta}\vert \le\binom{n+\vert\alpha\vert}{\vert\alpha\vert}\binom{n+\vert\beta\vert}{\vert\beta\vert}
	\binom{n+\min(\vert\alpha\vert,\vert\beta\vert)}{\min(\vert\alpha\vert,\vert\beta\vert)}
	\binom{n+\vert a\vert}{\vert a\vert}\binom{n+\vert b\vert}{\vert b\vert} \\[5pt]
	\le 2^{\vert\alpha\vert + \vert\beta\vert + \min(\vert\alpha\vert,\vert\beta\vert) + \vert a\vert + \vert b\vert + 5n}.
\end{multline}
\textbf{4.} As a consequence of \eqref{20062018E3}, \eqref{20042018E4} and \eqref{20042018E5}, there exists a positive constant $C_3>1$ such that for all $(a,b)\in\mathbb N^{2n}$, $(\alpha,\beta)\in\mathbb N^n_I\times\mathbb N^n_J$ and $0<t<t_0$,
\begin{equation}\label{20042018E7}
	\big\Vert\partial^a_x\partial^b_{\xi}\big(x^{\alpha}\ \sharp\ \xi^{\beta}\ \sharp\ e^{-q_t(x,\xi)}\big)\big\Vert_{L^{\infty}(\mathbb R^{2n})}\le
	\frac{C_3^{1+\vert a\vert + \vert b\vert + \vert \alpha\vert + \vert \beta\vert}}{t^{k(\vert\alpha\vert+\vert\beta\vert+\vert a\vert+\vert b\vert + s/2)}}\ a!\ b!\ (\alpha!)^{\frac12}\ (\beta!)^{\frac12}.
\end{equation}
It follows from \eqref{20042018E7} and the Calder\'on-Vaillancourt theorem, see \cite{MR1696697} (Theorem 1.2) which provides sharp estimates, that the operator $x^{\alpha}D^{\beta}_x(e^{-q_t})^w:L^2(\mathbb R^n)\rightarrow L^2(\mathbb R^n)$ is bounded on $L^2(\mathbb R^n)$, and that its operator norm is controlled in the following way
\begin{equation}\label{20042018E8}
	\big\Vert x^{\alpha}D^{\beta}_x(e^{-q_t})^w\big\Vert_{\mathcal{L}(L^2)}
\le C \sup_{\vert a\vert,\vert b\vert\le \lfloor n/2\rfloor+1}\big\Vert\partial^a_x\partial^b_{\xi}\big(x^{\alpha}\ \sharp\ \xi^{\beta}\ \sharp\ e^{-q_t(x,\xi)}\big)\big\Vert_{L^{\infty}(\mathbb R^{2n})},
\end{equation}
where $C>0$ is a positive constant and $\mathcal{L}(L^2)$ stands for the set of bounded operators on $L^2(\mathbb R^n)$. Theorem \ref{18042018T1} is then a consequence of \eqref{20042018E7} and \eqref{20042018E8}.

\section{Regularizing effects of quadratic operators}\label{refquad}

This section is devoted to the proof of Theorem \ref{20112017T1}. Let $q:\mathbb R^n_x\times\mathbb R^n_{\xi}\rightarrow\mathbb C$ be a complex-valued quadratic form with a non-negative real part $\Reelle q\geq0$. We consider, for all $0\le t\ll1$ small enough, the time-dependent quadratic form $q_t$ defined by
\begin{equation}\label{24042018E1}
	q_t:X\in\mathbb R^{2n}\mapsto\sigma(X,\tan(tF)X)\in\mathbb C,
\end{equation}
where $F$ is the Hamilton map of the quadratic form $q$ and $\tan$ denotes the matrix tangent function. In the following, we aim at studying the time-dependent quadratic form $q_t$ in a general setting in order to check that when the singular space $S$ of $q$ satisfies $S^{\perp} = \mathbb R^n_I\times\mathbb R^n_J$, with $I,J\subset\{1,\ldots,n\}$, the orthogonality being taken with respect to the canonical Euclidean structure of $\mathbb R^{2n}$, and that the inclusion $S\subset\Ker(\Imag F)$ holds, $q_t$ satisfies the conditions \eqref{08052018E2} and \eqref{19042018E5} for the pair $(I,J)$. More precisely, we aim at deriving some coercive estimates for the real time-dependent quadratic form $\Reelle q_t$ in subspaces of the phase space, and then to investigate the variables on which $q_t$ depends. We shall then see that Theorem \ref{20112017T1} can be deduced from Theorem \ref{18042018T1} and the Mehler formula.

\subsection{Coercive estimates} First, we establish that the quadratic form $\Reelle q_t$ is coercive on some subspaces of the phase space. To that end, we introduce the following auxiliary time-dependent quadratic form
\begin{equation}\label{30032018E8}
	Q_t:X\in\mathbb C^{2n}\mapsto -i\sigma\left(\overline{(e^{2itF}+I_{2n})X},(e^{2itF}-I_{2n})X\right)\in\mathbb C,\quad t\geq0.
\end{equation}
The following lemma is an adaptation of \cite{MR3880300} (Lemmas 3.1 and 3.2):

\begin{lem}\label{04042018L1} Let $q:\mathbb R^n_x\times\mathbb R^n_{\xi}\rightarrow\mathbb{C}$ be a complex-valued quadratic form with a non-negative real part $\Reelle q\geq0$ and $F$ be the Hamilton map of $q$. Then, for all $t\geq0$ and $X\in\mathbb{C}^{2n}$, we have
$$\Reelle \left[Q_t(X)\right] = 4\int_0^t(\Reelle q)(\Reelle(e^{2isF}X))\ ds + 4\int_0^t(\Reelle q)(\Imag(e^{2isF}X))\ ds\geq0,$$
where $Q_t$ is the time-dependent quadratic form associated to $q$ defined in \eqref{30032018E8}.
\end{lem}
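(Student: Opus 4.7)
The plan is to verify the identity by differentiation. Both sides vanish at $t=0$: the left-hand side because $e^{0\cdot F}=I_{2n}$ makes $(e^{2itF}-I_{2n})X=0$, and the right-hand side trivially. It therefore suffices to show that their $t$-derivatives agree.

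Setting $Y(t) = e^{2itF}X$, so that $Y'(t) = 2iFY(t)$ and $\overline{Y(t)}\,' = -2i\overline{F}\,\overline{Y(t)}$, I would expand $Q_t$ using the bilinearity of $\sigma$ and differentiate term by term. Invoking the skew-symmetry identity $\sigma(FZ,W) = -\sigma(Z,FW)$ from \eqref{04122017E6}, extended complex-bilinearly, together with its counterpart $\sigma(\overline{F}Z,W) = -\sigma(Z,\overline{F}W)$ for the conjugate Hamilton map, the derivative collapses to
$$\frac{d}{dt}Q_t(X) = 4\sigma\bigl(\overline{Y(t)},(\Reelle F)Y(t)\bigr) + 2\bigl[\sigma(\overline{X},FY(t)) - \sigma(\overline{Y(t)},\overline{F}X)\bigr].$$
The bracketed remainder is purely imaginary: since $\sigma$ has a real coefficient matrix, conjugation yields $\overline{\sigma(\overline{X},FY(t))} = \sigma(X,\overline{F}\,\overline{Y(t)})$, and one further use of the skew-symmetry of $F$ with respect to $\sigma$ rewrites this as $\sigma(\overline{Y(t)},\overline{F}X)$. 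Hence the bracket equals $2i\,\Imag\,\sigma(\overline{X},FY(t))$ and vanishes upon taking the real part.

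It remains to identify $\Reelle\,\sigma(\overline{Y},(\Reelle F)Y)$ when $Y = A + iB$ with $A,B\in\mathbb{R}^{2n}$. A direct expansion using the bilinearity of $\sigma$ and the defining identity $\sigma(Z,FZ)=q(Z)$ (applied to real vectors, for which $(\Reelle F)Z$ is real) gives
$$\Reelle\bigl[\sigma(\overline{Y},(\Reelle F)Y)\bigr] = \sigma(A,(\Reelle F)A) + \sigma(B,(\Reelle F)B) = (\Reelle q)(A) + (\Reelle q)(B).$$
Applied with $A = \Reelle\,Y(t)$ and $B = \Imag\,Y(t)$, this yields
$$\frac{d}{dt}\Reelle\,Q_t(X) = 4(\Reelle q)\bigl(\Reelle\,e^{2itF}X\bigr) + 4(\Reelle q)\bigl(\Imag\,e^{2itF}X\bigr),$$
and integrating from $0$ to $t$ produces the claimed formula. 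The non-negativity is then immediate from the hypothesis $\Reelle q \geq 0$.

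I do not foresee a substantive obstacle; the only step that is at all error-prone is the symplectic bookkeeping, where one must carefully keep track of the fact that $\sigma$ has been extended complex-bilinearly (rather than sesquilinearly) and exploit the benign behaviour of complex conjugation under $\sigma$ to force the cancellation of the cross term in the derivative.
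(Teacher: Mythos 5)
Your proof is correct and follows essentially the same route as the paper: differentiate $\Reelle Q_t$ in $t$, use the skew-symmetry of $F$ (and $\overline F$) with respect to $\sigma$ to force the cross terms to be purely imaginary, identify the surviving real term $\sigma(\overline Y,(\Reelle F)Y)$ with $(\Reelle q)(\Reelle Y)+(\Reelle q)(\Imag Y)$ via the defining relation $q(\cdot)=\sigma(\cdot,F\cdot)$, and integrate from $0$. The only cosmetic difference is order of operations (the paper first strips the purely imaginary cross terms from $Q_t(X)$ itself and then differentiates, while you differentiate $Q_t(X)$ in full and discard the imaginary remainder afterward), which is an inessential rearrangement of the same computation.
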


\begin{proof} For $X\in\mathbb{C}^{2n}$, we consider the function $\varphi_X : t\geq0\mapsto \Reelle \left[Q_t(X)\right]$. We first notice from the skew-symmetry of the symplectic form that for all $t\geq0$,
\begin{align*}
	\varphi_X(t) & = \Reelle\left[-i\sigma(\overline{(e^{2itF}+I_{2n})X},(e^{2itF}-I_{2n})X)\right] \\[5pt]
	& = \Reelle\left[-i\left(\sigma(\overline{e^{2itF}X},e^{2itF}X) - \sigma(\overline{X},X)\right)\right]
	+ \Reelle\left[-i\left(\sigma(\overline{X},e^{2itF}X) - \sigma(\overline{e^{2itF}X},X)\right)\right] \\[5pt]
	& = -i\left(\sigma(\overline{e^{2itF}X},e^{2itF}X) - \sigma(\overline{X},X)\right)
	+ \Reelle\left[-i\left(\sigma(\overline{X},e^{2itF}X) + \overline{\sigma(\overline{X},e^{2itF}X)}\right)\right] \\[5pt]
	& = -i\left(\sigma(\overline{e^{2itF}X},e^{2itF}X) - \sigma(\overline{X},X)\right),
\end{align*}
as for all $Y\in\mathbb{C}^{2n}$,
\begin{align*}
	-i\sigma(\overline{Y},Y) & = \frac{1}{2i}(\sigma(\overline{Y},Y) + \sigma(\overline{Y},Y)) = \frac{1}{2i}(\sigma(\overline{Y},Y) - \sigma(Y,\overline{Y})) \\[5pt]
	& = \frac{1}{2i}(\sigma(\overline{Y},Y) - \overline{\sigma(\overline{Y},Y)}) = \Imag[\sigma(\overline{Y},Y)]\in\mathbb R.
\end{align*}
Moreover, the function $\varphi_X$ is smooth and its derivative is given for all $t\geq0$ by
\begin{align}\label{22062018E5}
	(\varphi_X)'(t) & = -i\sigma(\overline{2iFe^{2itF}X},e^{2itF}X) - i\sigma(\overline{e^{2itF}X},2iFe^{2itF}X) \\[5pt]
	& = -2\sigma(\overline{Fe^{2itF}X},e^{2itF}X) + 2\sigma(\overline{e^{2itF}X},Fe^{2itF}X) \nonumber \\[5pt]
	& = 2\sigma(\overline{e^{2itF}X},\overline{F}e^{2itF}X) + 2\sigma(\overline{e^{2itF}X},Fe^{2itF}X) \nonumber \\[5pt]
	& = 2\sigma(\overline{e^{2itF}X},(F+\overline{F})e^{2itF}X)
	= 4\sigma(\overline{e^{2itF}X},(\Reelle F)e^{2itF}X), \nonumber
\end{align}
since $F$ is skew-symmetric with respect to $\sigma$, see \eqref{04122017E6}. By using anew \eqref{04122017E6} and the skew-symmetry of the symplectic form, we deduce that for all $Y\in\mathbb{C}^{2n}$,
\begin{gather*}
	\sigma(\Reelle Y,(\Reelle F)(\Imag Y)) = -\sigma((\Reelle F)(\Reelle Y),\Imag Y) = \sigma(\Imag Y,(\Reelle F)(\Reelle Y)),
\end{gather*}
and therefore,
\begin{multline*}
	\sigma(\overline{Y},(\Reelle F)Y) = \sigma(\Reelle Y, (\Reelle F)(\Reelle Y)) + \sigma(\Imag Y,(\Reelle F)(\Imag Y)) \\[5pt]
	+ i\left[\sigma(\Reelle Y,(\Reelle F)(\Imag Y)) -\sigma(\Imag Y,(\Reelle F)(\Reelle Y))\right],
\end{multline*}
that is
\begin{equation}\label{25062018E1}
	\sigma(\overline{Y},(\Reelle F)Y) = \sigma(\Reelle Y, (\Reelle F)(\Reelle Y)) + \sigma(\Imag Y,(\Reelle F)(\Imag Y)).
\end{equation}
It follows from \eqref{04122017E4}, \eqref{22062018E5} and \eqref{25062018E1} that for all $t\geq0$,
\begin{align}\label{25062018E2}
	(\varphi_X)'(t) & = 4\sigma(\Reelle(e^{2itF}X),(\Reelle F)\Reelle(e^{2itF}X)) + 4\sigma(\Imag(e^{2itF}X),(\Reelle F)\Imag(e^{2itF}X)) \\[5pt]
	& = 4(\Reelle q)(\Reelle(e^{2itF}X)) + 4(\Reelle q)(\Imag(e^{2itF}X))\geq0, \nonumber
\end{align}
since $\Reelle q\geq0$. This ends the proof of Lemma \ref{04042018L1} since $\varphi_X(0) = 0$.
\end{proof}

The two following algebraic lemmas are instrumental in the following: 

\begin{lem}\label{18042018L1} Let $A$ be a real $n\times n$ symmetric positive semidefinite matrix. Then, we have
$$\forall X\in\mathbb R^n,\quad \langle X,AX\rangle = 0 \Leftrightarrow AX = 0.$$
\end{lem}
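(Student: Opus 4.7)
The backward implication is immediate: if $AX=0$, then trivially $\langle X,AX\rangle=0$.

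For the forward implication, the plan is to use the existence of a symmetric positive semidefinite square root. Since $A$ is real symmetric and positive semidefinite, the spectral theorem furnishes a symmetric positive semidefinite matrix $B$ such that $B^2=A$. Then for any $X\in\mathbb R^n$,
$$\langle X,AX\rangle=\langle X,B^2X\rangle=\langle BX,BX\rangle=\vert BX\vert^2,$$
where the middle equality uses the symmetry $B^T=B$. Hence $\langle X,AX\rangle=0$ forces $BX=0$, and therefore $AX=B(BX)=0$.

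An alternative route, which avoids invoking a square root, is to apply the Cauchy-Schwarz inequality to the positive semidefinite symmetric bilinear form $(X,Y)\mapsto\langle X,AY\rangle$: for every $Y\in\mathbb R^n$,
$$\vert\langle X,AY\rangle\vert^2\le\langle X,AX\rangle\,\langle Y,AY\rangle.$$
If $\langle X,AX\rangle=0$, this yields $\langle AX,Y\rangle=\langle X,AY\rangle=0$ for every $Y\in\mathbb R^n$, and thus $AX=0$. I expect no genuine obstacle here — this is a one-line consequence of either the spectral theorem or Cauchy-Schwarz, and the author almost certainly records it as a standalone lemma only because it will be used repeatedly in the coercive estimates for $\Reelle q_t$ built from the symmetric positive semidefinite structure of $\Reelle Q$.
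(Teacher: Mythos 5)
Both of your arguments are correct. The paper's proof is a coordinate version of the same idea as your first argument: it invokes the spectral theorem to produce an orthonormal eigenbasis $(e_1,\ldots,e_n)$ of $A$ with eigenvalues $\lambda_j\ge 0$, expands $X=\sum_j X_j e_j$, notes that $\langle X,AX\rangle=\sum_j \lambda_j X_j^2=0$ forces $\lambda_j X_j=0$ for each $j$, and concludes $AX=\sum_j\lambda_j X_j e_j=0$. Your square-root argument is the coordinate-free packaging of that same spectral-theorem input: it replaces the explicit diagonalization by the factorization $A=B^2$ with $B$ symmetric positive semidefinite, which makes the implication a one-line consequence of $\langle X,AX\rangle=|BX|^2$. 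Your Cauchy-Schwarz argument is a genuinely different route: it does not invoke the spectral theorem at all, only the fact that $(X,Y)\mapsto\langle X,AY\rangle$ is a symmetric positive semidefinite bilinear form, and it therefore works verbatim for positive semidefinite bilinear forms on arbitrary inner-product spaces. Either of your arguments is slightly shorter than the paper's; the paper's version is perhaps more self-contained for a reader not wanting to recall the construction of the square root, while the Cauchy-Schwarz version is the most elementary and the most general.
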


\begin{proof} Thanks to the spectral theorem for real symmetric matrices, we can consider $(e_1,\ldots,e_n)$ an orthonormal basis of $\mathbb R^n$ equipped with its Euclidean structure, where the $e_j$ are eigenvectors of $A$. For all $j=1,\ldots,n$, let $\lambda_j\geq0$ be the eigenvalue associated to $e_j$. The $\lambda_j$ are non-negative real numbers since $A$ is positive semidefinite. Let $X\in\mathbb R^n$ satisfying $\langle X,AX\rangle = 0$. Decomposing $X$ in the basis $(e_1,\ldots,e_n)$, $X = X_1e_1+\ldots+X_ne_n$,
with $X_1,\ldots,X_n\in\mathbb R$, we notice that
$$\langle X,AX\rangle = \sum_{j=1}^n\lambda_jX^2_j = 0,$$
since $(e_1,\ldots,e_n)$ is orthonormal. Moreover, the $\lambda_j$ are non-negative real numbers, and it follows that $\lambda_jX^2_j = 0$ for all $1\le j\le n$. Therefore, $\lambda_jX_j = 0$ for all $1\le j\le n$, and this implies that
$$AX = \sum_{j=1}^n\lambda_jX_je_j = 0.$$
Conversely, if $AX=0$, then $\langle X,AX\rangle = 0$.
\end{proof}

\begin{lem}\label{17042018L1} Let $F\in M_n(\mathbb{C})$ be a complex $n\times n$ matrix and $K\geq0$ be a non-negative integer. Then, for all $X\in\mathbb{C}^n$ satisfying
\begin{equation}\label{17042018E7}
	\forall k\in\{0,\ldots,K\},\quad (\Reelle F)(F^kX) = 0,
\end{equation}
we have
\begin{gather}\label{17042018E8}
	\forall k\in\{0,\ldots,K\},\quad (\Reelle F)(\Imag F)^kX = 0.
\end{gather}
\end{lem}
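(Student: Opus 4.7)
The plan is to perform a simultaneous induction on $k$, establishing not only the desired conclusion \eqref{17042018E8} but also the auxiliary identity
$$F^k X = i^k (\Imag F)^k X, \quad k \in \{0,\ldots,K\}.$$
The point of the auxiliary identity is that it allows me to convert the hypothesis \eqref{17042018E7}, which is expressed in terms of $F^k$, into the conclusion \eqref{17042018E8}, which is expressed in terms of $(\Imag F)^k$.

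The base case $k=0$ is immediate: the auxiliary identity reduces to $X = X$, and \eqref{17042018E8} at $k=0$ is nothing but the hypothesis \eqref{17042018E7} at $k=0$. For the inductive step, I would assume both statements hold at level $k$ (with $k \le K-1$) and establish them at level $k+1$ in that order. To obtain the auxiliary identity at level $k+1$, I write
$$F^{k+1} X = F \cdot F^k X = i^k F (\Imag F)^k X = i^k (\Reelle F)(\Imag F)^k X + i^{k+1}(\Imag F)^{k+1} X,$$
using the decomposition $F = \Reelle F + i\Imag F$ together with the auxiliary identity at level $k$, and then invoke \eqref{17042018E8} at level $k$ to kill the first term. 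Once the auxiliary identity is known at level $k+1$, I apply $\Reelle F$ on the left and use the hypothesis $(\Reelle F) F^{k+1} X = 0$ to deduce $i^{k+1}(\Reelle F)(\Imag F)^{k+1} X = 0$, which gives \eqref{17042018E8} at level $k+1$.

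There is no real obstacle here beyond being careful about the order within the inductive step: the auxiliary identity at level $k+1$ must be proved first (consuming \eqref{17042018E8} at level $k$), and only then can one obtain \eqref{17042018E8} at level $k+1$ by combining the auxiliary identity at level $k+1$ with the hypothesis \eqref{17042018E7} at level $k+1$. The two statements thus propagate together from level $k$ to level $k+1$ without circularity, and the induction terminates at $k = K$, yielding \eqref{17042018E8} for all $k \in \{0,\ldots,K\}$.
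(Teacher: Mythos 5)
Your proof is correct, and it takes a genuinely cleaner route than the paper's. The paper also argues by induction on $k$, but its inductive step expands $(\Reelle F)(F^{k+1}X)$ into the full sum of words in $\Reelle F$ and $\Imag F$ of length $k+2$ and then argues that every term containing at least one internal factor $\Reelle F$ vanishes because one can extract a subword of the form $\Reelle F(\Imag F)^l X$ with $l\le k$; this requires strong induction (carrying along all levels $0,\ldots,k$) and a somewhat fiddly combinatorial bookkeeping to isolate the lone surviving term $i^{k+1}(\Reelle F)(\Imag F)^{k+1}X$. You instead strengthen the induction hypothesis to include the identity $F^kX = i^k(\Imag F)^kX$, which collapses the entire expansion: the cross terms are seen to vanish \emph{before} applying $\Reelle F$, the step from $k$ to $k+1$ uses only the immediately preceding level (simple rather than strong induction), and the conclusion falls out by one application of $\Reelle F$ and the hypothesis. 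What you buy is a shorter, less error-prone argument and an extra structural fact about $X$ (that $F$ acts on it as $i\Imag F$) that the paper's proof does not make explicit; what the paper's approach buys is nothing in this instance, so yours is simply the better write-up of the same underlying observation.
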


\begin{proof} We prove \eqref{17042018E8} by induction on the non-negative integer $0\le k\le K$. The induction hypothesis $k=0$ is straightforward from \eqref{17042018E7}. Let $0\le k\le K-1$ such that
\begin{equation}\label{17042018E5}
	\forall l\in\{0,\ldots,k\},\quad (\Reelle F)(\Imag F)^lX = 0.
\end{equation}
It follows from a direct computation that $(\Reelle F)(F^{k+1}X)$ writes as
\begin{equation}\label{10112017E2}
	(\Reelle F)(F^{k+1}X) = \sum i^j(\Reelle F)(\Imag F)^{j_1}(\Reelle F)^{j_2}\ldots(\Reelle F)^{j_{s-1}}(\Imag F)^{j_s}X,
\end{equation}
where the sum is finite with $0\le j,s\le k+1$, the $j_l$ are non-negative integers, where $1\le l\le s$, and each product is composed of $k+2$ matrices including $j$ terms $\Imag F$. When the product 
$$(\Imag F)^{j_1}(\Reelle F)^{j_2}\ldots(\Reelle F)^{j_{s-1}}(\Imag F)^{j_s},$$
contains at least one matrix $\Reelle F$, we can extract from 
$$i^j(\Reelle F)(\Imag F)^{j_1}(\Reelle F)^{j_2}\ldots(\Reelle F)^{j_{s-1}}(\Imag F)^{j_s}X$$
a subproduct of the form
$\Reelle F(\Imag F)^lX,$
with $0\le l\le k$. It follows from the induction hypothesis \eqref{17042018E5} that  $\Reelle F(\Imag F)^lX = 0,$
and therefore, we have
\begin{gather}\label{17042018E9}
	i^j(\Reelle F)(\Imag F)^{j_1}(\Reelle F)^{j_2}\ldots(\Reelle F)^{j_{s-1}}(\Imag F)^{j_s}X = 0.
\end{gather}
On the other hand, the only term such that the product 
$$(\Imag F)^{j_1}(\Reelle F)^{j_2}\ldots(\Reelle F)^{j_{s-1}}(\Imag F)^{j_s},$$
does not contain any matrix $\Reelle F$, writes as
\begin{gather}\label{17042018E10}
	i^j(\Reelle F)(\Imag F)^{j_1}(\Reelle F)^{j_2}\ldots(\Reelle F)^{j_{s-1}}(\Imag F)^{j_s}X = i^{k+1}(\Reelle F)(\Imag F)^{k+1}X.
\end{gather}
It follows from \eqref{17042018E7}, \eqref{10112017E2}, \eqref{17042018E9} and \eqref{17042018E10} that
$$i^{k+1}(\Reelle F)(\Imag F)^{k+1}X = (\Reelle F)(F^{k+1}X) = 0,$$
which ends the induction and the proof of Lemma \ref{17042018L1}.
\end{proof}

The corollary of the next lemma will be key to derive some positivity for the time-dependent quadratic form $\Reelle Q_t$.

\begin{lem}\label{25062018L1} Let $q:\mathbb R^n_x\times\mathbb R^n_{\xi}\rightarrow\mathbb{C}$ be a complex-valued quadratic form with a non-negative real part $\Reelle q\geq0$ and $F$ be its Hamilton map. Let $K\geq0$ be a non-negative integer. For all $X\in\mathbb{C}^{2n}$ satisfying
\begin{equation}\label{22062018E2}
	\forall k\in\{0,\ldots,K\},\quad \partial^{2k+1}_t\Reelle\left[Q_t(X)\right]\Big\vert_{t=0} = 0,
\end{equation}
where $Q_t$ is the time-dependent quadratic form associated to $q$ defined in \eqref{30032018E8}, we have
\begin{equation}\label{22062018E3}
	\forall k\in\{0,\ldots,K\},\quad (\Reelle F)(\Imag F)^kX = 0.
\end{equation}
\end{lem}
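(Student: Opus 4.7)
The plan is to exploit the non-negativity of the derivative $g'(t) = \partial_t \Reelle[Q_t(X)]$ provided by Lemma \ref{04042018L1}, together with real-analyticity, in order to extract from the hypothesis \eqref{22062018E2} enough vanishing to conclude $(\Reelle F)(F^jX) = 0$ for $j=0,\ldots,K$, and then to invoke Lemma \ref{17042018L1} to convert this into the desired vanishing \eqref{22062018E3} involving powers of $\Imag F$.

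First, I would set $g(t) = \Reelle[Q_t(X)]$. By Lemma \ref{04042018L1}, $g(0)=0$ and
$$g'(t) = 4(\Reelle q)(\phi(t)) + 4(\Reelle q)(\psi(t)) \geq 0,$$
where $\phi(t) = \Reelle(e^{2itF}X)$ and $\psi(t) = \Imag(e^{2itF}X)$ are real-analytic curves in $\mathbb R^{2n}$. Since $g'$ is real-analytic and non-negative near $t=0$, its first non-vanishing Taylor coefficient at the origin must be of even order and positive. The hypothesis \eqref{22062018E2} forces the Taylor coefficients of $g'$ at the even orders $0,2,\ldots,2K$ to vanish. By a short induction on the order, combining these vanishings with non-negativity, the intermediate odd-order coefficients at orders $1,3,\ldots,2K+1$ must vanish as well, since otherwise the first non-zero one would produce a sign change. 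This yields $g'(t) = O(t^{2K+2})$, and since $g'$ is the sum of two non-negative analytic functions, each summand $(\Reelle q)(\phi(t))$ and $(\Reelle q)(\psi(t))$ is individually $O(t^{2K+2})$.

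The second stage is algebraic. Writing $(\Reelle q)(\phi(t)) = \langle \phi(t),(\Reelle Q)\phi(t)\rangle$ and expanding in powers of $t$, with $\phi_m = \partial_t^m\phi(0) = \Reelle((2iF)^mX) \in \mathbb R^{2n}$, the Taylor coefficient at even order $2m$ is
$$A_{2m} = \sum_{j+k=2m}\frac{1}{j!\,k!}\,\langle\phi_j,(\Reelle Q)\phi_k\rangle.$$
I would proceed by induction on $m$: assuming $(\Reelle Q)\phi_j = 0$ for all $j<m$, the symmetry of $\Reelle Q$ collapses every cross-term in $A_{2m}$, leaving only the diagonal contribution $(m!)^{-2}(\Reelle q)(\phi_m)$. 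Since $A_{2m}=0$ from the first step, Lemma \ref{18042018L1} applied to the real symmetric positive semidefinite matrix $\Reelle Q$ yields $(\Reelle Q)\phi_m = 0$. The same argument for $\psi$ gives $(\Reelle Q)\psi_j = 0$ for $j=0,\ldots,K$.

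Finally, combining the two real identities through $\phi_j + i\psi_j = (2i)^jF^jX$ together with $(2i)^j \ne 0$ produces $(\Reelle Q)F^jX = 0$ in $\mathbb C^{2n}$, and hence $(\Reelle F)F^jX = J(\Reelle Q)F^jX = 0$ for $j=0,\ldots,K$, using the factorisation \eqref{21062018E6} and the invertibility of $J$. Lemma \ref{17042018L1}, applied with the dimension $2n$, then converts these equalities into $(\Reelle F)(\Imag F)^kX = 0$ for all $k\in\{0,\ldots,K\}$, which is precisely \eqref{22062018E3}. The main obstacle is the first step: the hypothesis controls only the odd derivatives of $g$ at the origin, and the upgrade to a genuine zero of order $2K+2$ for $g'$ rests in an essential way on the non-negativity coming from the integral representation supplied by Lemma \ref{04042018L1}.
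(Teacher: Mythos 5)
Your proof is correct, but it follows a genuinely different route from the paper's. Both arguments reduce the lemma to proving $(\Reelle F)(F^kX)=0$ for $k=0,\ldots,K$ and then invoke Lemma~\ref{17042018L1}, and both rely on Lemma~\ref{18042018L1}; the divergence is in how the vanishing of $(\Reelle F)(F^kX)$ is extracted. The paper works order by order: it applies the Leibniz formula directly to $\varphi_X'(t)=4\sigma(\overline{e^{2itF}X},(\Reelle F)e^{2itF}X)$ to compute $\varphi_X^{(2k+1)}(0)$, and uses the skew-symmetry of $\Reelle F$ with respect to $\sigma$ together with the inductive hypothesis to kill all off-diagonal terms, so that only the single term $4^{k+1}\binom{2k}{k}\sigma(\overline{F^kX},(\Reelle F)F^kX)$ survives; non-negativity of $\Reelle q$ then enters only at the very end to split $\sigma(\overline{F^kX},(\Reelle F)F^kX)=(\Reelle q)(\Reelle F^kX)+(\Reelle q)(\Imag F^kX)=0$ into two vanishing pieces. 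You instead front-load the non-negativity: observing that $g'=\partial_t\Reelle[Q_t(X)]$ is real-analytic and non-negative on a full neighbourhood of $0$ (not just on $[0,\infty)$), the vanishing of its even derivatives up to order $2K$ forces $g'(t)=O(t^{2K+2})$, and further forces each of the two non-negative summands $(\Reelle q)(\phi(t))$ and $(\Reelle q)(\psi(t))$ to be $O(t^{2K+2})$ individually. The subsequent Taylor-coefficient induction on $(\Reelle Q)\phi_m$ and $(\Reelle Q)\psi_m$ is then a clean diagonalisation argument needing only the symmetry of $\Reelle Q$ rather than the symplectic skew-symmetry. Your route yields the strictly stronger intermediate fact that all derivatives of $g$ up to order $2K+2$ vanish (the paper only ever uses the odd ones), at the cost of an extra analyticity-cum-positivity step; the paper's route avoids that step by doing slightly more bookkeeping in the Leibniz expansion. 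One small imprecision worth tightening: the passage from vanishing even-order derivatives of $g'$ to vanishing all derivatives through order $2K+1$ is not really an induction but a direct contradiction (the lowest non-vanishing Taylor coefficient of a non-negative analytic function cannot sit at odd order), and the fact that each non-negative summand inherits the $O(t^{2K+2})$ bound deserves a sentence of its own by the same leading-coefficient argument.
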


\begin{proof} Let $X\in\mathbb{C}^{2n}$ satisfying \eqref{22062018E2}. We first prove by induction that
\begin{equation}\label{25062018E9}
	\forall k\in\{0,\ldots,K\},\quad (\Reelle F)(F^kX) = 0.
\end{equation}
Let $\varphi_X$ be the function defined by 
\begin{equation}\label{25062018E7}
	\varphi_X(t) = \Reelle\left[Q_t(X)\right],\quad t\geq0.
\end{equation}
We recall from \eqref{25062018E2} that the function $\varphi_X$ is smooth and its derivative is given for all $t\geq0$ by
\begin{equation}\label{22062018E7}
	(\varphi_X)'(t) = 4(\Reelle q)(\Reelle(e^{2itF}X)) + 4(\Reelle q)(\Imag(e^{2itF}X)).
\end{equation}
Since the quadratic form $\Reelle q$ is non-negative, it follows from \eqref{22062018E2}, \eqref{25062018E7} and \eqref{22062018E7} applied with $t=0$ that 
\begin{equation}\label{25062018E8}
	(\Reelle q)(\Reelle X) = (\Reelle q)(\Imag X) = 0.
\end{equation}
We deduce from \eqref{21062018E6}, \eqref{29062018E3}, \eqref{25062018E8} and Lemma \ref{18042018L1} that
$$(\Reelle F)(\Reelle X) = (\Reelle F)(\Imag X) = 0,$$
and therefore, $(\Reelle F)X = 0$. It proves the induction hypothesis in the basic case. Now, we consider $0\le k\le K$ such that 
\begin{equation}\label{25062018E3}
	\forall l\in\{0,\ldots,k-1\},\quad (\Reelle F)(F^lX) = 0.
\end{equation}
Since $\Reelle F$ is a real matrix, we also have 
\begin{equation}\label{25062018E4}
	\forall l\in\{0,\ldots,k-1\},\quad (\Reelle F)(\overline{F^lX}) = 0.
\end{equation}
We recall from \eqref{22062018E5} that the derivative of $\varphi_X$ also writes as 
\begin{equation}\label{06022019E1}
	\forall t\geq0,\quad (\varphi_X)'(t) = 4\sigma(\overline{e^{2itF}X},(\Reelle F)e^{2itF}X).
\end{equation}
It therefore follows from the Leibniz formula applied to \eqref{06022019E1} that for all $t\geq0$,
\begin{align*}
	(\varphi_X)^{(2k+1)}(t) & = 4\sum_{p=0}^{2k}\binom{2k}{p}\sigma(\overline{(2iF)^pe^{2itF}X},(\Reelle F)(2iF)^{2k-p}e^{2itF}X) \\[5pt]
	& = (-1)^k 4^{k+1}\sum_{p=0}^{2k}(-1)^p\binom{2k}{p}\sigma(\overline{F^pe^{2itF}X},(\Reelle F)F^{2k-p}e^{2itF}X).
\end{align*}
This implies that
\begin{equation}\label{22062018E8}
	(\varphi_X)^{(2k+1)}(0) = (-1)^k 4^{k+1}\sum_{p=0}^{2k}(-1)^p\binom{2k}{p}\sigma(\overline{F^pX},(\Reelle F)(F^{2k-p}X)).
\end{equation}
Let $0\le p\le 2k$. When $0\le p\le k-1$, we deduce from \eqref{04122017E6} and \eqref{25062018E4} that 
\begin{equation}\label{25062018E5}
	\sigma(\overline{F^pX},(\Reelle F)(F^{2k-p}X)) = -\sigma((\Reelle F)(\overline{F^pX}),F^{2k-p}X) = 0.
\end{equation}
On the other hand, when $k+1\le p\le 2k$, we have $0\le 2k-p\le k-1$ and it follows from \eqref{25062018E3} that
\begin{equation}\label{25062018E6}
	\sigma(\overline{F^pX},(\Reelle F)(F^{2k-p}X)) = 0.
\end{equation}
As a consequence of \eqref{22062018E2}, \eqref{25062018E7}, \eqref{22062018E8}, \eqref{25062018E5} and \eqref{25062018E6}, we obtain
\begin{equation}\label{18072018E1}
	(\varphi_X)^{(2k+1)}(0) = 4^{k+1}\binom{2k}{k}\sigma(\overline{F^kX},(\Reelle F)(F^kX)) = 0.
\end{equation}
Then, it follows from \eqref{04122017E4}, \eqref{25062018E1} and \eqref{18072018E1} that
\begin{align*}
	\sigma(\overline{F^kX},(\Reelle F)(F^kX)) & = \sigma(\Reelle(F^kX),(\Reelle F)\Reelle(F^kX)) + \sigma(\Imag(F^kX),(\Reelle F)\Imag(F^kX))\\[5pt]
	& = (\Reelle q)(\Reelle(F^kX)) + (\Reelle q)(\Imag(F^kX)) = 0.
\end{align*}
Since $\Reelle q$ is non-negative, this implies that
\begin{equation}\label{06022019E2}
	(\Reelle q)(\Reelle(F^kX)) = (\Reelle q)(\Imag(F^kX)) = 0.
\end{equation}
As above, we deduce from \eqref{21062018E6}, \eqref{29062018E3}, \eqref{06022019E2} and Lemma \ref{18042018L1} that
$$(\Reelle F)(\Reelle (F^k X)) = (\Reelle F)(\Imag (F^kX)) = 0,$$
and therefore $(\Reelle F)(F^kX) = 0$. This ends the induction and proves that \eqref{25062018E9} holds. Then, \eqref{22062018E3} is a consequence of \eqref{25062018E9} and Lemma \ref{17042018L1}.
\end{proof}

\begin{cor}\label{22062018C1} Let $q:\mathbb R^n_x\times\mathbb R^n_{\xi}\rightarrow\mathbb{C}$ be a complex-valued quadratic form with a non-negative real part $\Reelle q\geq0$ and $S$ be its singular space. We assume that $S\ne\mathbb R^{2n}$. Let $0\le k_0\le 2n-1$ be the smallest integer such that \eqref{22062018E1} holds. Then, for all $X\in\mathbb{C}^{2n}\setminus(S+iS)$, there exists a non-negative integer $0\le k_X\le k_0$ such that
$$\partial^{2k_X+1}_t\Reelle\left[Q_t(X)\right]\Big\vert_{t=0} \ne 0,$$
where $Q_t$ is the time-dependent quadratic form associated to $q$ defined in \eqref{30032018E8}. 
\end{cor}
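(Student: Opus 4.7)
The plan is to derive the corollary as the direct contrapositive of Lemma~\ref{25062018L1}, using only the extra observation that the singular space $S$ passes cleanly from $\mathbb{R}^{2n}$ to $\mathbb{C}^{2n}$ by complexification. No further computation is needed beyond what is already set up.

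First I would argue by contradiction: fix $X \in \mathbb{C}^{2n}$ and suppose that for every integer $0 \le k \le k_0$ one has $\partial_t^{2k+1}\Reelle[Q_t(X)]\big|_{t=0} = 0$. This is precisely the hypothesis \eqref{22062018E2} of Lemma~\ref{25062018L1} with $K = k_0$, so the lemma yields
$$\forall k \in \{0,\dots,k_0\},\quad (\Reelle F)(\Imag F)^k X = 0.$$

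Next I would exploit that $\Reelle F$ and $\Imag F$ are real matrices. Writing $X = \Reelle X + i\,\Imag X$ with $\Reelle X,\Imag X \in \mathbb{R}^{2n}$ and splitting each identity into its real and imaginary parts, one obtains
$$(\Reelle F)(\Imag F)^k(\Reelle X) = 0 \quad \text{and} \quad (\Reelle F)(\Imag F)^k(\Imag X) = 0,$$
for every $0 \le k \le k_0$. By the very definition \eqref{22062018E1} of the integer $k_0$, this is exactly the statement that $\Reelle X, \Imag X \in S$, hence $X \in S + iS$. Taking the contrapositive, any $X \in \mathbb{C}^{2n} \setminus (S + iS)$ must fail at least one of the vanishing conditions, which produces the required index $0 \le k_X \le k_0$.

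There is no real obstacle here; the only mild subtlety is the bookkeeping to see that the complexified singular space is $S + iS$, which is immediate because the kernels $\Ker\bigl(\Reelle F(\Imag F)^j\bigr)$ inside $\mathbb{C}^{2n}$ are the complexifications of their real counterparts when $\Reelle F,\Imag F$ are real matrices. The substantive work has already been carried out in Lemma~\ref{25062018L1}, whose induction on $k$ and use of Lemma~\ref{17042018L1} translated the vanishing of odd $t$-derivatives of $\Reelle Q_t(X)$ into the algebraic conditions defining $S$.
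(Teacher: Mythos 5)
Your proof is correct and follows essentially the same route as the paper's: both reduce the corollary to Lemma~\ref{25062018L1} applied with $K = k_0$, invoking its contrapositive. The paper leaves implicit the observation that, since $\Reelle F$ and $\Imag F$ are real, the simultaneous vanishing of $(\Reelle F)(\Imag F)^k X$ for $0 \le k \le k_0$ is equivalent to $\Reelle X, \Imag X \in S$, i.e.\ $X \in S + iS$, whereas you spell this out explicitly; this is a helpful clarification but not a different argument.
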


\begin{proof} Let $X\in\mathbb{C}^{2n}\setminus(S+iS)$. As a consequence of \eqref{22062018E1}, there exists $0\le\tilde{k}_X\le k_0$ such that
$$(\Reelle F)(\Imag F)^{\tilde{k}_X}X\ne0.$$
Then, we deduce from Lemma \ref{25062018L1} the existence of $0\le k_X\le k_0$ such that
$$\partial^{2k_X+1}_t\Reelle\left[Q_t(X)\right]\Big\vert_{t=0} \ne 0.$$
This ends the proof of Corollary \ref{22062018C1}.
\end{proof}

The proof of the following result is an adaptation of \cite{MR2664715} (Proposition 3.2):

\begin{prop}\label{25102017T1} Let $q:\mathbb R^n_x\times\mathbb R^n_{\xi}\rightarrow\mathbb C$ be a complex-valued quadratic form with a non-negative real part $\Reelle q\geq0$ and $S$ be its singular space. We assume that $S\ne\mathbb R^{2n}$. Let $0\le k_0\le 2n-1$ be the smallest integer such that \eqref{22062018E1} holds. Then, for all compact set $K$ of $\mathbb S^{4n-1}$ satisfying $(S+iS)\cap K = \emptyset$, there exist some positive constants $c>0$ and $0<T\le 1$ such that for all $0\le t\le T$ and $X \in K$,
$$\Reelle \left[Q_t(X)\right] \geq c t^{2k_0+1},$$
where $Q_t$ is the time-dependent quadratic form associated to $q$ defined in \eqref{30032018E8} and $\mathbb S^{4n-1}$ stands for the Euclidean unit sphere of $\mathbb{C}^{2n}$ identified to $\mathbb R^{4n}$.
\end{prop}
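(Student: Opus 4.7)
The plan is to argue by compactness, combining the non-negativity provided by Lemma~\ref{04042018L1} with the pointwise positivity supplied by Corollary~\ref{22062018C1}. Set $\varphi_X(t)=\Reelle[Q_t(X)]$ and $N=2k_0+1$. The function $\varphi_X$ is real-analytic in $t$ (as a composition of a polynomial with the entire matrix exponential $e^{2itF}$), satisfies $\varphi_X(0)=0$, and is non-negative on $[0,\infty)$; hence its first non-vanishing Taylor coefficient at $0$ is strictly positive. Since $X\notin S+iS$, Corollary~\ref{22062018C1} furnishes some $0\le k_X\le k_0$ with $\partial_t^{2k_X+1}\varphi_X(0)\ne 0$. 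Letting $m_X\in\{1,\ldots,N\}$ denote the order of vanishing of $\varphi_X$ at $0$, we therefore have $m_X\le N$ and $\varphi_X^{(m_X)}(0)>0$, so that pointwise $\varphi_X(t)\ge c_X t^{m_X}\ge c_X t^{N}$ for $t\in(0,\delta_X]$ (using $t\le 1$).

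To uniformize this estimate, I would argue by contradiction: suppose there exist sequences $X_n\in K$ and $t_n\in(0,1/n]$ with $\varphi_{X_n}(t_n)/t_n^{N}\to 0$. By compactness of $K$, extract $X_n\to X_\infty\in K$ and set $m=m_{X_\infty}$, $a=\varphi_{X_\infty}^{(m)}(0)/m!>0$. Write $\varphi_X(t)=X^*A(t)X$ where $A(t)\in M_{2n}(\mathbb C)$ is Hermitian, real-analytic in $t$, satisfies $A(0)=0$ and $A(t)\succeq 0$ for $t\ge 0$; Taylor-expand $A(t)=\sum_{j=1}^{N}A_j t^j/j!+t^{N+1}R(t)$ with $R$ uniformly bounded on $[0,1]$, so that the Taylor coefficients $X\mapsto a_j(X)=X^*A_jX$ are continuous (indeed polynomial) in $X$, and the remainder $X^*R(t)X$ is uniformly $O(t^{N+1})$ for $X$ on the unit sphere.

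Having set this up, the argument proceeds via an induction on the singular space filtration. Define the decreasing family of closed subsets of $K$
\[
K_k=\bigl\{X\in K\ :\ (\Reelle F)(\Imag F)^jX=0\ \text{for}\ 0\le j<k\bigr\},\qquad 0\le k\le k_0+1,
\]
so that $K_0=K$ and $K_{k_0+1}=K\cap(S+iS)=\emptyset$ by hypothesis. On the complement $K_k\setminus K_{k+1}$, the computation of $\varphi_X^{(2k+1)}(0)$ obtained in the proof of Lemma~\ref{25062018L1} (namely $\varphi_X^{(2k+1)}(0)=4^{k+1}\binom{2k}{k}\bigl[(\Reelle q)(\Reelle F^kX)+(\Reelle q)(\Imag F^kX)\bigr]$ modulo terms that vanish on $K_k$) together with Lemma~\ref{18042018L1} gives a positive continuous lower bound on the leading Taylor coefficient in terms of $\|(\Reelle F)(\Imag F)^kX\|^2$. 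Using the positive semidefiniteness of the intermediate matrices $A_j$ to control the sign of the lower-order contributions after restriction to $K_{k}$, and proceeding by a finite covering of $K$ adapted to this stratification, one derives that a convergent subsequence must, at the limit $X_\infty$, contradict the positivity of $a>0$ and the bound $\varphi_{X_n}(t_n)\le t_n^{N}/n$.

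The main obstacle is precisely that $m_X$ is lower semicontinuous but not continuous in $X$: Taylor coefficients $a_j(X_n)$ of order $j<m_{X_\infty}$ tend to $0$ along the sequence but may do so arbitrarily slowly relative to $t_n$, so that the naive Taylor comparison with $\varphi_{X_\infty}$ fails. It is only by combining the positive semidefiniteness of $A(t)$ with the filtration by the subspaces $V_k^{\mathbb C}=\bigcap_{j<k}\Ker((\Reelle F)(\Imag F)^j)$ that one forces the negative contributions into sufficiently high order in $t$ on each stratum, thereby closing the compactness argument. This is the adaptation of \cite{MR2664715} (Proposition~3.2) alluded to in the statement.
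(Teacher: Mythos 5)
Your setup is right as far as it goes — compactness, contradiction sequences $X_n\to X_\infty$, $t_n\downarrow 0$ with $\varphi_{X_n}(t_n)/t_n^{2k_0+1}\to 0$, and the observation that Corollary~\ref{22062018C1} gives a nonvanishing odd derivative at the limit point — and you have correctly identified the crux: the order of vanishing $m_X$ is only lower semicontinuous, so Taylor coefficients of order $j<m_{X_\infty}$ may tend to zero arbitrarily slowly compared to $t_n$, spoiling a naive comparison. But the step you offer to close this gap does not actually work. You invoke ``positive semidefiniteness of the intermediate matrices $A_j$'' together with a filtration of $K$ by the sets $\bigcap_{j<k}\Ker((\Reelle F)(\Imag F)^j)$, and assert this forces negative contributions into high order. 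The Taylor coefficient matrices $A_j$ of a nonnegative curve $A(t)\succeq 0$ need \emph{not} be positive semidefinite (already in one dimension, $t-t^3+t^5$ is nonnegative and increasing near $0$ with a negative cubic coefficient), so the proposed sign control fails, and the filtration argument is never carried out beyond gesture. As written, the proposal is a correct identification of the difficulty followed by a sketch that does not resolve it.

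The paper's resolution uses a different and more elementary mechanism, and two features of it are absent from your sketch. First, it works \emph{locally}: for each fixed $X\in K$ it uses the integer $k_X\le k_0$ from Corollary~\ref{22062018C1} and produces a neighborhood $V_X$ and constants $c_X,t_X$ such that $\Reelle[Q_t(Y)]\geq c_X t^{2k_X+1}$ for $Y\in V_X$, $0<t\le t_X$; the global bound then follows from a finite subcover, not from a single compactness argument over all of $K$. Second, and decisively, the local estimate is established by exploiting the \emph{monotonicity} of $t\mapsto\Reelle[Q_t(Y)]$ (this is exactly what Lemma~\ref{04042018L1} delivers via $(\varphi_Y)'\geq 0$), not merely its nonnegativity. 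Monotonicity upgrades the smallness of $\varphi_{Y_N}(t_N)/t_N^{2k_X+1}$ at the single point $t_N$ to smallness of $\sup_{[0,t_N]}\varphi_{Y_N}/t_N^{2k_X+1}$. After the rescaling $s=x t_N$, the Taylor expansion in $x$ produces a polynomial $p_N$ of degree at most $2k_X+1$ (plus a uniformly $O(t_N)$ remainder) converging to $0$ in $L^\infty([0,1])$; equivalence of norms on the finite-dimensional space of polynomials of bounded degree then forces every rescaled coefficient $a_{k,N}/t_N^{2k_X+1-k}\to 0$, in particular the top one $a_{2k_X+1,N}\to 0$, contradicting continuity of $Y\mapsto\partial_t^{2k_X+1}\Reelle[Q_t(Y)]\big|_{t=0}$ and its nonvanishing at $X$. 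That is the step your proof is missing; the filtration and semidefiniteness of Taylor coefficient matrices are neither needed nor available.
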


\begin{proof} Let $K$ be a compact set of $\mathbb S^{4n-1}$ satisfying $(S+iS)\cap K = \emptyset$. Let $X\in K$ and $0\le k_X\le k_0$ be a positive integer given by Corollary \ref{22062018C1} satisfying
\begin{equation}\label{25102017E4}
	\partial^{2k_X+1}_t\Reelle\left[Q_t(X)\right]\Big\vert_{t=0} \ne 0.
\end{equation}
We first prove that there exist some positive constants $c_X>0$, $0<t_X<1$ and an open neighborhood $V_X$ of $X$ in $\mathbb S^{4n-1}\cap(S+iS)^c$ such that for all $0<t\le t_X$ and $Y\in V_X$, 
\begin{equation}\label{25102017E3}
	\Reelle \left[Q_t(Y)\right]\geq c_X t^{2k_X+1}.
\end{equation}
We proceed by contradiction and assume that \eqref{25102017E3} does not hold. Then there exist some sequences $(t_N)_{N\geq0}$ of positive real numbers and $(Y_N)_{N\geq0}$ of unit vectors of $\mathbb S^{4n-1}$ satisfying
\begin{equation}\label{25102017E5}
	\lim_{N\rightarrow+\infty}t_N = 0,\quad \lim_{N\rightarrow+\infty}Y_N = X,\quad \lim_{N\rightarrow+\infty}\frac{1}{t_N^{2k_X+1}}\Reelle \left[Q_{t_N}(Y_N)\right] = 0.
\end{equation}
We deduce from \eqref{25102017E5} and Lemma \ref{04042018L1} that
\begin{equation}\label{25102017E6}
	\lim_{N\rightarrow+\infty}\Big(\sup_{0\le t\le t_N}\frac1{t_N^{2k_X+1}}\Reelle\left[Q_t(Y_N)\right]\Big) = 0,
\end{equation}
since the mapping $t\in\mathbb R_+\mapsto \Reelle\left[Q_t(Y_N)\right]$ is non-decreasing. The equality \eqref{25102017E6} can be reformulate as 
\begin{equation}\label{25102017E7}
	\lim_{N\rightarrow+\infty}\Big(\sup_{0\le x\le 1}\vert u_N(x)\vert\Big) = 0,
\end{equation}
with
\begin{equation}\label{25102017E8}
	\forall x\in[0,1],\quad u_N(x) = \frac{1}{t_N^{2k_X+1}}\Reelle \left[Q_{xt_N}(Y_N)\right]\geq0.
\end{equation}
It follows from the Taylor formula that
$$\Reelle \left[Q_t(Y_N)\right] = \sum_{k=0}^{2k_X+1}a_{k,N} t^k + \frac{t^{2k_X+2}}{(2k_X+1)!}\int_0^1(1-\theta)^{2k_X+1}\partial^{2k_X+2}_s\Reelle\left[Q_s(Y_N)\right]\Big\vert_{s=t\theta}d\theta,$$
where
$$\forall k\in\{0,\ldots,2k_X+1\},\quad a_{k,N} = \frac{1}{k!}\partial^k_t\Reelle\left[Q_t(Y_N)\right]\Big\vert_{t=0}.$$
Since $\partial^{2k_X+2}_s\Reelle\left[Q_s\right]$ is a quadratic form whose coefficients depend smoothly on the variable $0\le s\le 1$ and the $Y_N$ are elements of the unit sphere $\mathbb S^{4n-1}$, we notice that
$$\exists C>0,\forall t\in[0,1], \forall N\geq0,\quad \left\vert\int_0^1(1-\theta)^{2k_X+1}\partial^{2k_X+2}_s\Reelle\left[Q_s(Y_N)\right]\Big\vert_{s=t\theta}d\theta\right\vert\le C.$$
Therefore,
$$\Reelle \left[Q_t(Y_N)\right] \underset{t\rightarrow0}{=} \sum_{k=0}^{2k_X+1}a_{k,N} t^k + \mathcal{O}(t^{2k_X+2}),$$
where the term $\mathcal O(t^{2k_X+2})$ can be assumed to be independent on the integer $N$. As a consequence, the following Taylor expansion
\begin{equation}\label{25102017E9}
	u_N(x) = \sum_{k=0}^{2k_X+1}\frac{a_{k,N}}{t_N^{2k_X+1-k}}\ x^k + \mathcal{O}(t_N x^{2k_X+2}),
\end{equation}
holds. It follows from \eqref{25102017E5}, \eqref{25102017E7} and \eqref{25102017E9} that
\begin{equation}\label{25102017E10}
	\lim_{N\rightarrow+\infty}\left[\sup_{0\le x\le 1}\vert p_N(x)\vert\right] = 0,
\end{equation}
where the $p_N$ are the polynomials defined by 
\begin{equation}\label{25102017E11}
	\forall x\in[0,1],\quad p_N(x) = \sum_{k=0}^{2k_X+1}\frac{a_{k,N}}{t_N^{2k_X+1-k}}\ x^k.
\end{equation}
It follows from the equivalence of norms in finite-dimensional vector spaces that
\begin{equation}\label{25102017E12}
	\forall k\in\{0,\ldots,2k_X+1\},\quad \lim_{N\rightarrow+\infty}\frac{a_{k,N}}{t_N^{2k_X+1-k}} = 0.
\end{equation}
In particular, we obtain that
$$\lim_{N\rightarrow+\infty}a_{2k_X+1,N} = 0.$$
However, this is in contraction with the fact that
\begin{align*}
	\lim_{N\rightarrow+\infty}a_{2k_X+1,N} & = \lim_{N\rightarrow+\infty}\frac{1}{(2k_X+1)!}\partial^{2k_X+1}_t\Reelle \left[Q_t(Y_N)\right]\Big\vert_{t=0} \\[5pt]
	& = \frac{1}{(2k_X+1)!}\partial^{2k_X+1}_t\Reelle\left[Q_t(X)\right]\Big\vert_{t=0}\ne0,
\end{align*}
according to \eqref{25102017E4}. Covering the compact set $K$ by finitely many open neighborhoods of the form $V_{X_1},\ldots,V_{X_R}$, and letting 
$$c = \min_{1\le j\le R}c_{X_j}\quad \text{and}\quad T = \min_{1\le j\le R}t_{X_j},$$
we conclude that
$$\forall t\in[0,T], \forall Y\in K,\quad \Reelle \left[Q_t(Y)\right]\geq c t^{2k_0+1}.$$
It ends the proof of Proposition \ref{25102017T1}.
\end{proof}

We can now derive from Proposition \ref{25102017T1} that the time-dependent quadratic form $\Reelle q_t$ defined in \eqref{24042018E1} satisfies coercive estimates on subspaces of the phase space.

\begin{cor}\label{17042018C1} Let $q:\mathbb R^n_x\times\mathbb R^n_{\xi}\rightarrow\mathbb C$ be a complex-valued quadratic form with a non-negative real part $\Reelle q\geq0$, $F$ be its Hamilton map and $S$ its singular space. Let $0\le k_0\le 2n-1$ be the smallest integer such that \eqref{22062018E1} holds. Then, for all linear subspace $\Sigma$ of $\mathbb R^{2n}$ satisfying $S\cap \Sigma = \{0\}$, there exist some positive constants $c>0$ and $0<T\le 1$ such that for all $0\le t\le T$ and $X \in \Sigma$,
\begin{equation}\label{02072018E1}
	(\Reelle q_t)(X) \geq c t^{2k_0+1}\vert X\vert^2,
\end{equation}
where $q_t$ stands for the time-dependent quadratic form associated to $q$ defined in \eqref{24042018E1}.
\end{cor}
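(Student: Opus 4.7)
The plan is to reduce the desired coercive lower bound for $\Reelle q_t$ on $\Sigma$ to the estimate for $\Reelle Q_t$ provided by Proposition \ref{25102017T1}, via the substitution $Y_t = (e^{2itF}+I_{2n})^{-1}X$, which is well-defined for $t$ in a neighborhood of $0$ since $e^{0}+I_{2n} = 2I_{2n}$ is invertible. The first step is to establish the identity
$$q_t(X) = Q_t(Y_t), \qquad X\in\mathbb R^{2n}.$$
Using the standard identity $\tan(tF) = -i(e^{2itF}+I_{2n})^{-1}(e^{2itF}-I_{2n})$, the definition \eqref{24042018E1} of $q_t$ gives $q_t(X) = -i\sigma(X,(e^{2itF}-I_{2n})Y_t)$. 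On the other hand, since $X$ is real, $\overline{(e^{2itF}+I_{2n})Y_t} = \overline X = X$, so the definition \eqref{30032018E8} of $Q_t$ yields
$$Q_t(Y_t) = -i\sigma\bigl(\overline{(e^{2itF}+I_{2n})Y_t},(e^{2itF}-I_{2n})Y_t\bigr) = -i\sigma\bigl(X,(e^{2itF}-I_{2n})Y_t\bigr) = q_t(X).$$
Taking real parts, $\Reelle q_t(X) = \Reelle Q_t(Y_t)$.

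By the quadratic homogeneity of $q_t$, it suffices to prove $\Reelle q_t(X) \geq c t^{2k_0+1}$ uniformly on the compact set $\Sigma_1 := \Sigma\cap\mathbb S^{2n-1}$. To match the sphere $\mathbb S^{4n-1}\subset\mathbb C^{2n}$ appearing in Proposition \ref{25102017T1}, I would rescale by setting $\tilde Y_t := 2Y_t$, so that $\tilde Y_0 = X$ and $Q_t(\tilde Y_t) = 4 q_t(X)$. The hypothesis $S\cap\Sigma = \{0\}$ implies that $\Sigma_1$, viewed inside $\mathbb S^{4n-1}$, is disjoint from $S+iS$: indeed, any real vector of the form $s_1 + is_2$ with $s_1,s_2\in S$ must have $s_2=0$, hence lies in $S$. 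Since $S+iS$ is closed and $\Sigma_1$ is compact, $\Sigma_1$ admits a compact neighborhood $K\subset\mathbb S^{4n-1}$ still disjoint from $S+iS$. By continuity of $t\mapsto \tilde Y_t(X)$ at $t=0$ and compactness of $\Sigma_1$, there exists $0<T_0\le 1$ such that $|\tilde Y_t(X)|\geq 1/2$ and $\tilde Y_t(X)/|\tilde Y_t(X)|\in K$ for every $X\in\Sigma_1$ and $0\le t\le T_0$.

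Applying Proposition \ref{25102017T1} to the compact set $K$ then produces constants $c>0$ and $0<T\le T_0$ such that $\Reelle Q_t(Z)\geq ct^{2k_0+1}$ for every $Z\in K$ and $0\le t\le T$. Combining this bound with the quadratic homogeneity $\Reelle Q_t(\tilde Y_t) = |\tilde Y_t|^2 \Reelle Q_t(\tilde Y_t/|\tilde Y_t|)$, the identity of the first step, and the lower bound $|\tilde Y_t|\geq 1/2$, one obtains
$$4\Reelle q_t(X) = \Reelle Q_t(\tilde Y_t)\geq c\, |\tilde Y_t|^2 t^{2k_0+1}\geq \frac{c}{4}\, t^{2k_0+1},\qquad X\in\Sigma_1,$$
which extends to all $X\in\Sigma$ by quadratic homogeneity of $q_t$ and gives \eqref{02072018E1}. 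The main obstacle in this argument is the uniform continuity step: one must ensure simultaneously and uniformly in $X\in\Sigma_1$ that the perturbed vectors $\tilde Y_t(X)/|\tilde Y_t(X)|$ remain in a fixed compact set $K\subset\mathbb S^{4n-1}$ avoiding $S+iS$, which is precisely what allows the $t^{2k_0+1}$-lower bound of Proposition \ref{25102017T1} to be transferred uniformly back to $\Reelle q_t$.
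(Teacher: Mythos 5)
Your proof is correct, and it establishes the key identity $\Reelle q_t(X) = \Reelle Q_t\bigl((e^{2itF}+I_{2n})^{-1}X\bigr)$ via the tangent half-angle formula exactly as the paper does, reducing the coercivity of $\Reelle q_t$ on $\Sigma$ to Proposition \ref{25102017T1}. Where you diverge is in how the compact set $K\subset\mathbb S^{4n-1}$ avoiding $S+iS$ is constructed. The paper builds the union of subspaces $K_{t_1}=\bigcup_{0\le s\le t_1}(e^{2isF}+I_{2n})^{-1}(\Sigma+i\Sigma)$, proves it is closed, and then uses the Grassmann formula together with lower semicontinuity of the rank to show $(S+iS)\cap K_{t_1}=\{0\}$ for $t_1$ small. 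You instead work only with $\Sigma_1=\Sigma\cap\mathbb S^{2n-1}$, observe that it sits at positive distance from the closed set $S+iS$, take a closed spherical neighborhood $K$, and use uniform continuity of $(t,X)\mapsto 2(e^{2itF}+I_{2n})^{-1}X$ on $[0,T_0]\times\Sigma_1$ to push the normalized perturbed vectors into $K$. Your route avoids the complexification $\Sigma+i\Sigma$ and the rank argument entirely, which is cleaner and more elementary; the paper's construction buys a closed set with a transparent algebraic structure (a union of subspaces), at the cost of the semicontinuity-of-rank lemma. One trivial omission: Proposition \ref{25102017T1} assumes $S\ne\mathbb R^{2n}$, so you should note that when $S=\mathbb R^{2n}$ one has $\Sigma=\{0\}$, $\Sigma_1=\emptyset$, and the estimate is vacuous, as the paper does.
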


\begin{proof} We first assume that $S\ne\mathbb R^{2n}$ and consider $\Sigma$ a linear subspace of $\mathbb R^{2n}$ satisfying $S\cap\Sigma = \{0\}$. Let $t_0>0$ small enough such that
\begin{equation}\label{26062018E4}
	\forall t\in[0,t_0],\quad e^{2itF}+I_n\in\GL_n(\mathbb C).
\end{equation}
For all $0\le t\le t_0$, we define $K_t = \bigcup_{0\le s\le t}\Gamma_s\subset\mathbb C^{2n}$,
where the vector subspaces $\Gamma_s$ are given by
$$\Gamma_s = (e^{2isF}+I_{2n})^{-1}(\Sigma+i\Sigma)\subset\mathbb C^{2n},\quad 0\le s\le t.$$
We first check that for all $0\le t\le t_0$, $K_t$ is a closed subset of $\mathbb C^{2n}$. Let $(Y_p)_p$ be a sequence of $K_t$ converging to $Y\in\mathbb{C}^{2n}$. For all $p\geq0$, there exists $0\le s_p\le t$ and $X_p\in \Sigma+i\Sigma$ such that $Y_p = (e^{2is_pF}+I_{2n})^{-1}X_p$. Since $[0,t]$ is compact, there exists a subsequence $(p')$ such that $(s_{p'})_{p'}$ converges to $s_{\infty}\in[0,t]$. It follows from the continuity of the exponential function that 
$$\lim_{p'\rightarrow+\infty}e^{2is_{p'}F} = e^{2is_{\infty}F},\quad \text{and therefore,}\quad \lim_{p'\rightarrow+\infty}X_{p'} = X,\quad \text{where $X = (e^{2is_{\infty}F}+I_{2n})Y$}.$$ 
Moreover, $X_{p'}\in\Sigma+i\Sigma$ for all $p'$ and $\Sigma+i\Sigma$ is closed, so $X\in \Sigma+i\Sigma$. Finally, $Y = (e^{2is_{\infty}F}+I_{2n})^{-1}X\in K_t$, and $K_t$ is closed. Now, we prove that there exists $t_1>0$ such that
\begin{equation}\label{26062018E1}
	(S+iS)\cap K_{t_1} = \{0\}.
\end{equation}
To that end, we consider $\psi_s$ defined for all $0\le s\le t_0$ by $\psi_s = \dim(\Gamma_s + (S+iS))$. We observe that $\psi_s$ satisfies the estimate
\begin{equation}\label{26062018E3}
	\forall s\in[0,t_0],\quad \psi_s\le \dim \Gamma_s + \dim(S+iS)\le \dim(\Sigma+i\Sigma) + \dim(S+iS).
\end{equation}
Moreover, it follows from the Grassman formula that
\begin{equation}\label{26062018E5}
	\psi_0 = \dim((\Sigma+i\Sigma) + (S+iS)) = \dim(\Sigma+i\Sigma) + \dim(S+iS),
\end{equation}
since $(\Sigma+i\Sigma) \cap (S+iS) = \{0\}$. Since $\psi_s = \Rank M_s$, where $M_s\in M_{n,\psi_0}(\mathbb{C})$ is defined through its column vectors by
$$M_s = \begin{pmatrix}
	(e^{2isF}+I_{2n})^{-1}\mathcal{B}_1 & \mathcal{B}_2
\end{pmatrix},$$ 
with $\mathcal B_1$ a basis of $\Sigma+i\Sigma$ and $\mathcal B_2$ a basis of $S+iS$, we deduce from \eqref{26062018E5} and the lower semi-continuity of $\Rank$ that there exists $t_1>0$ such that 
\begin{equation}\label{26062018E2}
	\forall s\in[0,t_1],\quad \psi_s\geq \dim(\Sigma+i\Sigma) + \dim(S+iS).
\end{equation}
We deduce from \eqref{26062018E3} and \eqref{26062018E2} that for all $0\le s\le t_1$, $\psi_s = \dim \Gamma_s + \dim(S+iS)$,
and the Grassman formula implies that
$$\forall s\in[0,t_1],\quad (S+iS)\cap\Gamma_s = \{0\}.$$
Therefore, \eqref{26062018E1} holds. Since $K_{t_1}\cap\mathbb S^{4n-1}$ is a compact set of $\mathbb S^{4n-1}$ disjointed from $S+iS$, it follows from Proposition \ref{25102017T1} that there exist some positive constants $c_0>0$ and $0<t_2\le1$ such that for all $0\le t\le t_2$ and $X\in K_{t_1}\cap\mathbb S^{4n-1}$,
$$\Reelle\left[-i\sigma(\overline{(e^{2itF}+I_{2n})X},(e^{2itF}-I_{2n})X)\right]\geq c_0t^{2k_0+1}.$$
As a consequence, we have that for all $0\le t\le \min(t_1,t_2)$ and $X\in(\Sigma+i\Sigma)\setminus\{0\}$,
$$\Reelle\left[-i\sigma\left(\overline{\frac{(e^{2itF}+I_{2n})(e^{2itF}+I_{2n})^{-1}X}{\vert (e^{2itF}+I_{2n})^{-1}X\vert}},\frac{(e^{2itF}-I_{2n})(e^{2itF}+I_{2n})^{-1}X}{\vert (e^{2itF}+I_{2n})^{-1}X\vert}\right)\right]\geq c_0t^{2k_0+1},$$
that is, 
\begin{equation}\label{17042018E11}
	\Reelle\left[-i\sigma(\overline X,(e^{2itF}-I_{2n})(e^{2itF}+I_{2n})^{-1}X)\right]\geq c_0t^{2k_0+1}\vert (e^{2itF}+I_{2n})^{-1}X\vert^2.
\end{equation}
Furthermore, it follows from \eqref{26062018E4} that there exists a positive constant $c_1>0$ such that for all $0\le t\le \min(t_1,t_2)$ and $X\in\Sigma+i\Sigma$,
\begin{equation}\label{17042018E13}
	\vert (e^{2itF}+I_{2n})^{-1}X\vert^2\geq c_1\vert X\vert^2,
\end{equation}
since $0<t_1<t_0$. We deduce from \eqref{24042018E1}, \eqref{17042018E11} and \eqref{17042018E13} that there exist some positive constants $c>0$ and $0<T\le 1$ such that
$$\forall t\in[0,T], \forall X\in\Sigma,\quad (\Reelle q_t)(X)\geq ct^{2k_0+1}\vert X\vert^2,$$
since
$$-i(e^{2itF}-I_{2n})(e^{2itF}+I_{2n})^{-1} = \tan(tF),\quad 0\le t\ll1.$$
This ends the proof of Corollary \ref{17042018C1} when $S\ne\mathbb R^{2n}$. If $S=\mathbb R^{2n}$, then the only linear subspace $\Sigma\subset\mathbb R^{2n}$ satisfying $S\cap\Sigma = \{0\}$ is $\Sigma = \{0\}$ and \eqref{02072018E1} is trivial.
\end{proof}

\subsection{Variables of the Mehler symbol} In this subsection, we investigate the variables on which the time-dependent quadratic form $q_t$ depends, in order to check that $q_t$ satisfies the condition \eqref{19042018E5} for the pair $(I,J)$, with $I,J\subset\{1,\ldots,n\}$, when the singular space $S$ satisfies $S^{\perp} = \mathbb R^n_I\times\mathbb R^n_J$, the orthogonality being taken with respect to the canonical Euclidean structure of $\mathbb R^{2n}$.

\begin{lem}\label{25062018L3} Let $q:\mathbb R^n_x\times\mathbb R^n_{\xi}\rightarrow\mathbb C$ be a complex-valued quadratic form with a non-negative real part $\Reelle q\geq0$, $F$ be its Hamilton map and $S$ its singular space. Then,
\begin{equation}\label{25062018E10}
	\forall k\geq0, \forall X\in S,\quad \Reelle(F^{2k+1})X = 0.
\end{equation}
\end{lem}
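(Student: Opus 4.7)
The approach is to expand $F^{2k+1} = (\Reelle F + i\,\Imag F)^{2k+1}$ via the non-commutative binomial expansion, and show that when applied to $X\in S$, every monomial containing at least one $\Reelle F$ vanishes; the only surviving term will be $(i\,\Imag F)^{2k+1}X$, which is purely imaginary, so its real part is zero.

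The preliminary step is to establish that $S$ is invariant under $\Imag F$. Writing $R = \Reelle F$ and $I = \Imag F$, the Cayley--Hamilton theorem applied to the real $2n\times 2n$ matrix $I$ shows that any power $I^m$ with $m\geq 2n$ lies in the span of $I^0,\ldots,I^{2n-1}$, so the definition \eqref{04122017E7} is equivalent to
$$S = \bigl\{X\in\mathbb R^{2n} : R\,I^j X = 0 \text{ for all } j\geq 0\bigr\}.$$
For $X\in S$ and $j\geq 0$, one has $R\,I^j(IX) = R\,I^{j+1}X = 0$, and $IX\in\mathbb R^{2n}$ since $I$ is a real matrix, so $IX\in S$. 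By induction, $I^mX\in S$ for every $m\geq 0$.

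Now expand the non-commutative binomial: $(R+iI)^{2k+1}$ is the sum of the $2^{2k+1}$ ordered words of length $2k+1$ in the alphabet $\{R,\,iI\}$. Take any such word $w = w_1\cdots w_{2k+1}$ containing at least one letter $R$, and let $j_0$ be the largest index with $w_{j_0}=R$; then $w$ factors as $w_1\cdots w_{j_0-1}\cdot R\cdot (iI)^{2k+1-j_0}$. When applied to $X\in S$, the tail $(iI)^{2k+1-j_0}X = i^{2k+1-j_0}I^{2k+1-j_0}X$ still lies in $S$ by the $\Imag F$-invariance established above, and then $R$ annihilates it by definition of $S$ (case $j=0$). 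Hence every word involving at least one $R$ contributes zero, and
$$F^{2k+1}X = (iI)^{2k+1}X = i^{2k+1}(\Imag F)^{2k+1}X.$$

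Finally, since $2k+1$ is odd, $i^{2k+1}\in\{\pm i\}$ is purely imaginary, while $(\Imag F)^{2k+1}X\in\mathbb R^{2n}$. Thus $F^{2k+1}X$ is purely imaginary, and
$$\Reelle(F^{2k+1})X = \Reelle\bigl(F^{2k+1}X\bigr) = 0,$$
as required. The proof is essentially self-contained; the only conceptual point is the $\Imag F$-invariance of $S$, which drops out immediately from the kernel description of $S$, so I do not anticipate any genuine obstacle.
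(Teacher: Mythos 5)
Your proof is correct and takes essentially the same route as the paper: Cayley--Hamilton to establish $(\Imag F)S\subset S$, non-commutative binomial expansion of $F^{2k+1}$, and annihilation of every word containing an $\Reelle F$ by factoring at the rightmost occurrence. The only cosmetic difference is that you apply the expansion to $X$ first and observe the surviving term $(i\,\Imag F)^{2k+1}X$ is purely imaginary, whereas the paper takes $\Reelle(\cdot)$ first and notes that oddness of $2k+1$ forces every surviving monomial to contain at least one $\Reelle F$; both are the same parity argument used at a different moment.
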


\begin{proof} We first check that
\begin{equation}\label{25062018E13}
	(\Reelle F)S = \{0\}\quad \text{and}\quad (\Imag F)S\subset S.
\end{equation}
It follows from the definition of the singular space $S$, see \eqref{04122017E7}, that $(\Reelle F)S = \{0\}$. Then, the Cayley-Hamilton theorem applied to $\Imag F$ shows that
$$(\Imag F)^kX\in\Span(X,\ldots,(\Imag F)^{2n-1}X),\quad X\in\mathbb R^{2n},\ k\geq0,$$
and as a consequence, the singular space $S$ is actually equal to the infinite intersection of kernels
$$S = \bigcap_{j=0}^{+\infty}\Ker(\Reelle F(\Imag F)^j)\cap\mathbb R^{2n},$$
which proves that $(\Imag F)S\subset S$. Therefore, \eqref{25062018E13} holds. We can now derive \eqref{25062018E10} from \eqref{25062018E13}. Let $k\geq0$. A direct computation shows that
\begin{equation}\label{24042018E2}
	\Reelle (F^{2k+1}) = \sum (-1)^j(\Imag F)^{j_1}(\Reelle F)^{j_2}\ldots(\Reelle F)^{j_{s-1}}(\Imag F)^{j_s},
\end{equation}
where the sum is finite with $0\le j\le k$, the $j_l$ are non-negative integers, where $1\le l\le s$, and each product is composed of $2k+1$ matrices including $2j$ terms $\Imag F$. In particular, each product appearing in \eqref{24042018E2} contains at least one matrix $\Reelle F$. It follows from \eqref{25062018E13} that for all $X\in S$,
$$\Reelle (F^{2k+1})X = \sum (-1)^j(\Imag F)^{j_1}(\Reelle F)^{j_2}\ldots(\Reelle F)^{j_{s-1}}(\Imag F)^{j_s}X = 0.$$
This ends the proof of Lemma \ref{25062018L3}.
\end{proof}

\begin{lem}\label{24042018L1} Let $q:\mathbb R^n_x\times\mathbb R^n_{\xi}\rightarrow\mathbb{C}$ be a complex-valued quadratic form with a non-negative real part $\Reelle q\geq0$, $S$ be its singular space and $\Sigma$ be a linear subspace of $\mathbb R^{2n}$ satisfying $S+\Sigma=\mathbb R^{2n}$. Then, there exists $t_0>0$ such that for all $0\le t< t_0$ and all decomposition $X = X_S + X_{\Sigma}\in\mathbb R^{2n}$ with $X_S\in S$ and $X_{\Sigma}\in\Sigma$ (not unique),
$$(\Reelle q_t)(X) = (\Reelle q_t)(X_{\Sigma}),$$
where $q_t$ is the time-dependent quadratic form associated to $q$ defined in \eqref{24042018E1}.
\end{lem}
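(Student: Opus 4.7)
My plan is to reduce everything to a polarization identity for $q_t$ combined with the algebraic vanishing statement of Lemma~\ref{25062018L3}, namely $\Reelle(F^{2k+1})X_S=0$ for every $X_S\in S$ and every $k\ge 0$.

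First, I would choose $t_0>0$ small enough so that $\cos(tF)$ remains invertible on $[0,t_0)$ and $\tan(tF)$ is given by its convergent power series
\[ \tan(tF)=\sum_{k\ge 0}c_k\,(tF)^{2k+1}, \]
where $c_k\in\mathbb R$ denotes the $k$-th Taylor coefficient of $\tan$ at the origin. The identity $\sigma(X,FY)=\sigma(Y,FX)$ from \eqref{04122017E6}, combined with the skew-symmetry of $\sigma$, propagates to every odd iterate: $\sigma(X,F^{2k+1}Y)=\sigma(Y,F^{2k+1}X)$ for all $k\ge 0$. Taken termwise in the power series, this makes $\tan(tF)$ itself $\sigma$-self-adjoint, which yields the polarization formula
\[ q_t(X_\Sigma+X_S)=q_t(X_\Sigma)+q_t(X_S)+2\,\sigma\!\bigl(X_\Sigma,\tan(tF)X_S\bigr). \]

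The heart of the argument is then to check that the last two terms have vanishing real part whenever $X_S\in S$. By Lemma~\ref{25062018L3}, $\Reelle(F^{2k+1})X_S=0$ for every $k\ge 0$, so
\[ F^{2k+1}X_S=i\,\Imag(F^{2k+1})X_S \]
is a purely imaginary vector, because $\Imag(F^{2k+1})$ is a real matrix acting on the real vector $X_S$. Since $\sigma$ has real coefficients, it follows that for every $Z\in\mathbb R^{2n}$ and every $k\ge 0$,
\[ \sigma(Z,F^{2k+1}X_S)=i\,\sigma\!\bigl(Z,\Imag(F^{2k+1})X_S\bigr)\in i\mathbb R. \]
Multiplying by the real scalars $c_k t^{2k+1}$ and summing, one obtains $\sigma(Z,\tan(tF)X_S)\in i\mathbb R$ for every real $Z$.

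Specializing to $Z=X_S$ gives $\Reelle q_t(X_S)=0$, and to $Z=X_\Sigma$ gives $\Reelle\sigma(X_\Sigma,\tan(tF)X_S)=0$. Taking the real part of the polarization identity then yields $\Reelle q_t(X_\Sigma+X_S)=\Reelle q_t(X_\Sigma)$, as required. The hypothesis $S+\Sigma=\mathbb R^{2n}$ intervenes solely to guarantee that every $X\in\mathbb R^{2n}$ admits a decomposition of the stated form; non-uniqueness of the decomposition is harmless because the argument identifies $\Reelle q_t(X)$ with $\Reelle q_t(X_\Sigma)$ for \emph{every} admissible splitting. The only slightly delicate point is the $\sigma$-self-adjointness of odd powers of $F$ that collapses the cross terms into a single $2\sigma(X_\Sigma,\tan(tF)X_S)$; once this is verified, Lemma~\ref{25062018L3} carries the full weight of the proof.
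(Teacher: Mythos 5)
Your proof is correct and follows essentially the same route as the paper: both rely on Lemma~\ref{25062018L3}, the odd Taylor expansion of $\tan$ with real coefficients, and the $\sigma$-self-adjointness of odd powers of $F$ to kill the cross terms. The only cosmetic difference is that you spell out the polarization identity explicitly while the paper invokes bilinearity more tersely, and you phrase the vanishing as ``$\sigma(Z,\tan(tF)X_S)$ is purely imaginary'' rather than expanding $\Reelle[\sigma(Z,\tan(tF)Y)]$ termwise; these are the same computation.
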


\begin{proof} Let $Y\in S$ and $Z\in\mathbb R^{2n}$. We recall that the tangent function $\tan$ is analytic and that its Taylor expansion writes for all matrices $M\in M_n(\mathbb{C})$ such that $\Vert M\Vert< \pi/2$ as
\begin{equation}\label{25062018E11}
	\tan M = \sum_{k=0}^{+\infty}a_kM^{2k+1},
\end{equation}
where all the coefficients $a_k$ are positive real numbers. The continuity of the symplectic form and \eqref{25062018E11} imply that
$$\forall t\in(-t_0,t_0),\quad \Reelle\left[\sigma(Z,\tan(tF)Y)\right] = \sum_{k=0}^{+\infty}a_k\sigma(Z,\Reelle(F^{2k+1})Y)t^{2k+1},$$
where $t_0 = \pi/(2\Vert F\Vert)$. Since $Y\in S$, we deduce from Lemma \ref{25062018L3} that $\Reelle(F^{2k+1})Y = 0$ for all $k\geq0$, and therefore, we have
\begin{equation}\label{2062018E12}
	\forall t\in(-t_0,t_0),\quad \Reelle\left[\sigma(Z,\tan(tF)Y)\right] = 0.
\end{equation}
By using the skew-symmetry of the Hamilton map with respect to the symplectic form, see \eqref{04122017E6}, and the skew-symmetry of the symplectic form, we obtain that for all $t\in(-t_0,t_0)$,
\begin{multline}\label{25062018E15}
	\sigma(Y,\tan(tF)Z) = \sum_{k=0}^{+\infty}a_k\sigma(Y,F^{2k+1}Z)t^{2k+1} \\[5pt]
	= - \sum_{k=0}^{+\infty}a_k\sigma(F^{2k+1}Y,Z)t^{2k+1} = - \sigma(\tan(tF)Y,Z)
	= \sigma(Z,\tan(tF)Y).
\end{multline}
It follows from \eqref{2062018E12} that
$$\forall t\in(-t_0,t_0),\quad \Reelle\left[\sigma(Y,\tan(tF)Z)\right] = 0.$$
As a consequence, we have that for all $0\le t\le t_0$ and $X = X_S+X_{\Sigma}$ with $X_S\in S$ and $X_{\Sigma}\in\Sigma$,
$$ (\Reelle q_t)(X) = \Reelle\left[\sigma(X,\tan(tF)X)\right] = \Reelle\left[\sigma(X_{\Sigma},\tan(tF)X_{\Sigma})\right] = (\Reelle q_t)(X_{\Sigma}),$$
by bilinearity of the symplectic form. This ends the proof of Lemma \ref{24042018L1}.
\end{proof}

Let $q:\mathbb R^n_x\times\mathbb R^n_{\xi}\rightarrow\mathbb C$ be a complex-valued quadratic form with a non-negative real part $\Reelle q\geq0$. We assume that there exist some subsets $I,J\subset\{1,\ldots,n\}$ such that $S^{\perp} = \mathbb R^n_I\times\mathbb R^n_J$, the orthogonality being taken with respect to the canonical Euclidean structure of $\mathbb R^{2n}$. We deduce from Lemma \ref{24042018L1}, that there exists $t_0>0$ such that for all $0\le t<t_0$ and $X\in\mathbb R^{2n}$,
$$(\Reelle q_t)(X) = (\Reelle q_t)(X_{I,J}),$$
where $q_t$ is the time-dependent quadratic form associated to $q$ defined in \eqref{24042018E1}, and where $X_{I,J}$ stands for the component in $\mathbb R^n_I\times\mathbb R^n_J$ of the vector $X\in\mathbb R^{2n}$ according to the orthogonal decomposition $S\oplus^{\perp}(\mathbb R^n_I\times\mathbb R^n_J)=\mathbb R^{2n}$. The condition \eqref{19042018E5} is therefore always satisfied for real-part the time-dependent quadratic form $q_t$. However, as pointed out by the following example, we observe that the condition \eqref{19042018E5} is not satisfied in general for the time-dependent quadratic form $q_t$, and therefore, Theorem \ref{18042018T1} cannot be directly applied.

\begin{ex}\label{ex4} We consider the Kolmogorov operator 
$$P = -\partial^2_v +  v\partial_x,\quad (x,v)\in\mathbb R^2.$$
The Weyl symbol of $P$ is given by the quadratic form
$$q(x,v,\xi,\eta) = \eta^2 + iv\xi,\quad (x,v,\xi,\eta)\in\mathbb R^4,$$
and a direct computation shows that the Hamilton map and the singular space of $q$ are respectively given by
$$F = \frac12\begin{pmatrix}
	0 & i & 0 & 0 \\
	0 & 0 & 0 & 2 \\
	0 & 0 & 0 & 0 \\
	0 & 0 & -i & 0
\end{pmatrix}\quad \text{and}\quad S = \mathbb R_x\times\mathbb R_v\times\{0_{\mathbb R_{\xi}}\}\times\{0_{\mathbb R_{\eta}}\}.$$
Moreover, its Mehler symbol \eqref{24042018E1} is given for all $t\geq0$ and $(x,v,\xi,\eta)\in\mathbb R^4$ by
$$q_t(x,v,\xi,\eta) = \frac{t^3}{12}\xi^2 + t\eta^2 + itv\xi.$$
Notice that $S\oplus^{\perp}(\mathbb R^2_{\emptyset}\times\mathbb R^2_{\{1,2\}}) = \mathbb R^4$, the orthogonality being taken with respect to the canonical Euclidean structure of $\mathbb R^4$. However, we have that for all $t>0$,
$$it = q_t(0,1,1,0)\ne q_t(0,0,1,0) = 0,$$
and $(0,0,1,0)$ is the component in $\mathbb R^2_{\emptyset}\times\mathbb R^2_{\{1,2\}}$ of the vector $(0,1,1,0)\in\mathbb R^2\times\mathbb R^2$ with respect to the above orthogonal decomposition of the phase space $\mathbb R^4$.
\end{ex}

The issue pointed out by Example \ref{ex4} is that the imaginary part $\Imag q_t$ may depend on variables in the singular space $S$, whereas the real part $\Reelle q_t$ cannot according to Lemma \ref{24042018L1}. In order to ensure that condition \eqref{19042018E5} actually holds, we add an extra assumption on the quadratic form $q$ so that this case does not occur. Before introducing this condition, we provide another example:

\begin{ex}\label{ex3} We consider the Kramers-Fokker-Planck operator without external potential
$$K = -\partial^2_v + \frac14v^2 + v\partial_x,\quad (x,v)\in\mathbb R^2.$$
The Weyl symbol of $K$ is the following quadratic form
$$q(x,v,\xi,\eta) = \eta^2+\frac14v^2+iv\xi,\quad (x,v,\xi,\eta)\in\mathbb R^4.$$
We recall from Example \ref{ex1} that the Hamilton map and the singular space of $q$ are given by 
$$F = \frac12\begin{pmatrix}
	0 & i & 0 & 0 \\
	0 & 0 & 0 & 2 \\
	0 & 0 & 0 & 0 \\
	0 & -\frac12 & -i & 0
\end{pmatrix}\quad \text{and}\quad S = \mathbb R_x\times\{0_{\mathbb R_v}\}\times\{0_{\mathbb R_{\xi}}\}\times\{0_{\mathbb R_{\eta}}\}.$$
We observe that $F$ and $S$ satisfy $S\subset\Ker(\Imag F)$, whereas this inclusion does not hold in Example \ref{ex4}. Moreover, an algebraic computation shows that the Mehler symbol \eqref{24042018E1} of $q$ is given for all $t\geq0$ and $(x,v,\xi,\eta)\in\mathbb R^4$ by
$$q_t(x,v,\xi,\eta) = \frac12\Big(\tanh\frac t2\Big)v^2 + \Big(t-2\tanh\frac t2\Big)\xi^2 + 2\Big(\tanh\frac t2\Big)\eta^2 + 2i\Big(\tanh\frac t2\Big)v\xi.$$
We notice in this case that the variables appearing in the imaginary part $\Imag q_t$ do appear also in the real part $\Reelle q_t$.
\end{ex}

In the following lemma, we prove that the condition $S\subset\Ker(\Imag F)$ pointed out in Example \ref{ex3} is actually sufficient to ensure that the condition \eqref{19042018E5} holds.

\begin{lem}\label{24042018L2} Let $q:\mathbb R^n_x\times\mathbb R^n_{\xi}\rightarrow\mathbb{C}$ be a complex-valued quadratic form with a non-negative real part $\Reelle q\geq0$, whose singular space $S$ satisfies $S\subset \Ker(\Imag F)$, where $F$ stands for the Hamilton map of $q$. Let $\Sigma$ be a linear subspace of $\mathbb R^{2n}$ satisfying $S+\Sigma = \mathbb R^{2n}$. Then, there exists $t_0>0$ such that for all $0\le t< t_0$ and all decomposition $X = X_S+X_{\Sigma}\in\mathbb R^{2n}$, with $X_S\in S$ and $X_{\Sigma}\in\Sigma$,
$$q_t(X) = q_t(X_{\Sigma}),$$
where $q_t$ is the time-dependent quadratic form associated to $q$ defined in \eqref{24042018E1}. 
\end{lem}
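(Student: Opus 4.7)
The plan is to promote Lemma \ref{24042018L1}, which already gives invariance of the real part $\Reelle q_t$ under adding an element of $S$, to the full complex-valued quadratic form $q_t$. The key additional input is the hypothesis $S \subset \Ker(\Imag F)$, which I will combine with the inclusion $S \subset \Ker(\Reelle F)$ (built into the very definition \eqref{04122017E7} of the singular space, by taking $j=0$) to obtain the strong algebraic statement
$$\forall Y \in S,\quad FY = (\Reelle F)Y + i(\Imag F)Y = 0.$$
This is the algebraic heart of the lemma: in contrast to Example \ref{ex4}, the extra hypothesis ensures that $F$ itself annihilates $S$, not merely $\Reelle F$.

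From $FS = \{0\}$ I deduce $F^{2k+1} Y = 0$ for all $Y \in S$ and $k \ge 0$. Using the Taylor expansion
$$\tan(tF) = \sum_{k=0}^{+\infty} a_k\, t^{2k+1} F^{2k+1}, \qquad \Vert tF\Vert < \pi/2,$$
recalled in \eqref{25062018E11}, it follows that for $\vert t\vert < t_0 := \pi/(2\Vert F\Vert)$ and all $Y \in S$, $\tan(tF)Y = 0$. Dually, the skew-symmetry \eqref{04122017E6} of $F$ with respect to $\sigma$ gives, for every $Z \in \mathbb R^{2n}$, $Y \in S$ and $k \ge 0$,
$$\sigma(Y, F^{2k+1}Z) = -\sigma(FY, F^{2k}Z) = 0,$$
and therefore $\sigma(Y, \tan(tF) Z) = 0$ for all $Y \in S$.

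Expanding by bilinearity of $\sigma$,
\begin{align*}
	q_t(X_S + X_\Sigma) &= \sigma(X_S, \tan(tF) X_S) + \sigma(X_S, \tan(tF) X_\Sigma) \\
	&\quad + \sigma(X_\Sigma, \tan(tF) X_S) + \sigma(X_\Sigma, \tan(tF) X_\Sigma),
\end{align*}
the two summands containing $\tan(tF) X_S$ vanish since $\tan(tF) X_S = 0$, and the remaining mixed term $\sigma(X_S, \tan(tF) X_\Sigma)$ vanishes by the identity above applied with $Y = X_S \in S$. Only $\sigma(X_\Sigma, \tan(tF) X_\Sigma) = q_t(X_\Sigma)$ survives.

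There is no serious obstacle here: the lemma is an essentially immediate corollary of the observation $S \subset \Ker F$. The role of the hypothesis $S \subset \Ker(\Imag F)$ is precisely to exclude the bad scenario illustrated by the Kolmogorov operator of Example \ref{ex4}, where the singular space is invariant under $\Imag F$ but not annihilated by it, and where accordingly the imaginary part of the Mehler exponent genuinely depended on the $S$-component.
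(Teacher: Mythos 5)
Your proof is correct and follows essentially the same route as the paper: both reduce the hypothesis to $S\subset\Ker F$, use the Taylor series of $\tan$ to get $\tan(tF)S=\{0\}$, kill the remaining cross term $\sigma(X_S,\tan(tF)X_\Sigma)$ via skew-symmetry of $F$ with respect to $\sigma$, and conclude by bilinearity. The only cosmetic difference is that you apply skew-symmetry term by term to $\sigma(Y,F^{2k+1}Z)$, whereas the paper invokes its pre-established identity $\sigma(Y,\tan(tF)Z)=\sigma(Z,\tan(tF)Y)$.
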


\begin{proof} Since $(\Reelle F)S = \{0\}$ according to the definition of the singular space \eqref{04122017E7}, the assumption $S\subset \Ker(\Imag F)$ implies that $S\subset\Ker F$. It then follows from \eqref{25062018E11} that 
\begin{equation}\label{25062018E14}
	\forall t\in(-t_0,t_0),\forall Y\in S,\quad \tan(tF)Y = \sum_{k=0}^{+\infty}a_k(tF)^{2k+1}Y = 0,
\end{equation}
where $t_0 = \pi/(2\Vert F\Vert)$ and all the coefficients $a_k$ are positive real numbers. As a consequence of \eqref{25062018E15} and \eqref{25062018E14}, we notice that for all $t\in(-t_0,t_0)$, $Y\in S$ and $Z\in\mathbb R^{2n}$,
\begin{equation}\label{25062018E16}
	\sigma(Z,\tan(tF)Y) = \sigma(Y,\tan(tF)Z) = 0.
\end{equation}
Finally, we deduce from \eqref{25062018E16} and the bilinearity of the symplectic form $\sigma$ that for all $0\le t<t_0$ and $X = X_S+X_{\Sigma}$ with $X_S\in S$ and $X_{\Sigma}\in\Sigma$,
$$q_t(X) = \sigma(X,\tan(tF)X) = \sigma(X_{\Sigma},\tan(tF)X_{\Sigma}) = q_t(X_{\Sigma}).$$
This ends the proof of Lemma \ref{24042018L2}.
\end{proof}

\subsection{Proof of Theorem \ref{20112017T1}} The aim of this subsection is to prove Theorem \ref{20112017T1}. Let $q:\mathbb R^n_x\times\mathbb R^n_{\xi}\rightarrow\mathbb{C}$ be a complex-valued quadratic form with a non-negative real part $\Reelle q\geq0$. We assume that there exist some subsets $I,J\subset\{1,\ldots,n\}$ such that $S^{\perp}=\mathbb R^n_I\times\mathbb R^n_J$, the orthogonality being taken with respect to the canonical Euclidean structure of $\mathbb R^{2n}$. We also assume that the inclusion $S\subset\Ker(\Imag F)$ holds, where $F$ denotes the Hamilton map of $q$. Notice that the coefficients of the time-dependent quadratic form
$q_t:X\in\mathbb R^{2n}\mapsto\sigma(X,\tan(tF)X)\in\mathbb C$,
defined for $0\le t\le t_0$, with $0<t_0\ll1$ small enough, depend continuously on the time variable $t$. Since $S\oplus^{\perp}(\mathbb R^n_I\times\mathbb R^n_J) = \mathbb R^{2n}$ and $S\subset\Ker(\Imag F)$, it follows from Corollary \ref{17042018C1} and Lemma \ref{24042018L2} that there exist some positive constants $c>0$ and $0<t_0<1$ such that
$$\forall t\in[0,t_0], \forall X\in\mathbb R^n_I\times\mathbb R^n_J,\quad (\Reelle q_t)(X_{I,J})\geq ct^{2k_0+1}\vert X\vert^2,$$
and 
$$\forall t\in[0,t_0], \forall X\in\mathbb R^{2n},\quad q_t(X) = q_t(X_{I,J}),$$
where $0\le k_0\le 2n-1$ is the smallest integer such that \eqref{22062018E1} holds and $X_{I,J}$ stands for the component in $\mathbb R^n_I\times\mathbb R^n_J$ of the vector $X\in\mathbb R^{2n}$ with respect to the decomposition $S\oplus^{\perp}(\mathbb R^n_I\times\mathbb R^n_J) = \mathbb R^{2n}$. As a consequence, we deduce from Theorem \ref{18042018T1} that there exist some positive constants $C>1$ and $0<t_1<t_0$ such that for all $(\alpha,\beta)\in\mathbb N^n_I\times\mathbb N^n_J$, $0< t\le t_1$ and $u\in L^2(\mathbb R^n)$,
\begin{equation}\label{20042018E9}
	\big\Vert x^{\alpha}\partial^{\beta}_x(e^{-q_t})^wu\big\Vert_{L^2(\mathbb R^n)}\le\frac{C^{1+\vert\alpha\vert+\vert\beta\vert}}{t^{(2k_0+1)(\vert\alpha\vert+\vert\beta\vert+s)}}\ (\alpha!)^{\frac12}\ (\beta!)^{\frac12}\ \Vert u\Vert_{L^2(\mathbb R^n)},
\end{equation}
where $s = 9n/4+2\lfloor n/2\rfloor + 3$. Moreover, $t_0>0$ is chosen such that $\det(\cos(tF))\ne0$ for all $0\le t\le t_0$, and the Mehler formula \cite{MR1339714} (Theorem 4.2) provides that
\begin{equation}\label{20042018E10}
	\forall t\in[0,t_0],\quad e^{-tq^w} = \frac{(e^{-q_t})^w}{\sqrt{\det(\cos(tF))}},
\end{equation}
with 
$$\sqrt{\det(\cos(tF))} = \exp\bigg(\frac12\Log\big(\det(\cos(tF))\big)\bigg),$$
where $\Log$ denotes the principal determination of the complex logarithm on $\mathbb C\setminus\mathbb R_-$. It follows from \eqref{20042018E9} and \eqref{20042018E10} that for all $(\alpha,\beta)\in\mathbb N^n_I\times\mathbb N^n_J$, $0< t\le t_1$ and $u\in L^2(\mathbb R^n)$,
$$\big\Vert x^{\alpha}\partial^{\beta}_x(e^{-tq^w}u)\big\Vert_{L^2(\mathbb R^n)}\le\frac{MC^{1+\vert\alpha\vert+\vert\beta\vert}}{t^{(2k_0+1)(\vert\alpha\vert+\vert\beta\vert+s)}}\ (\alpha!)^{\frac12}\ (\beta!)^{\frac12}\ \Vert u\Vert_{L^2(\mathbb R^n)},$$
where
$$M = \max_{0\le t\le t_0}\bigg\vert\frac1{\sqrt{\det(\cos(tF))}}\bigg\vert.$$
This ends the proof of Theorem \ref{20112017T1}.

\section{Null-controllability of quadratic differential equations}
\label{controllability}

This section is devoted to the proof of Theorem \ref{08122017T1}. Let $q:\mathbb R^n_x\times\mathbb R^n_{\xi}\rightarrow\mathbb{C}$ be a complex-valued quadratic form with a non-negative real part $\Reelle q\geq0$ and $\omega\subset\mathbb R^n$ be a thick subset (see Definition \ref{10092018D1}). We assume that $q$ is diffusive as defined in Definition \ref{2} and that the singular space $S$ and the Hamilton map $F$ of $q$ satisfy $S\subset\Ker(\Imag F)$. To establish the observability estimate \eqref{08122017E9}, we use the following theorem which is the result of \cite{BEPS} (Theorem 3.2) applied for fixed control supports and semigroups, whose proof is based on an adaptation of the Lebeau-Robbiano strategy.

\begin{thm}\label{08122017T3} Let $\tilde q:\mathbb R^n_x\times\mathbb R^n_{\xi}\rightarrow\mathbb C$ be a complex-valued quadratic form with a non-negative real part $\Reelle \tilde q\geq0$, $\omega$ be a Borel subset of $\mathbb R^n$ and $(\pi_k)_{k\geq1}$ be a family of orthogonal projections defined on $L^2(\mathbb R^n)$. Assume that there exist $c_1, c'_1, c_2, c'_2, a,b,t_0,m_1>0$ and $m_2\geq0$ some constants with $a<b$ such that the following spectral inequality 
\begin{equation}\label{08122017E3}
	\forall u\in L^2(\mathbb R^n), \forall k\geq1,\quad \Vert\pi_k u\Vert_{L^2(\mathbb R^n)}\le c'_1e^{c_1k^a}\Vert\pi_ku\Vert_{L^2(\omega)},
\end{equation}
and the following dissipation estimate
\begin{equation}\label{08122017E4}
	\forall u\in L^2(\mathbb R^n), \forall k\geq1, \forall 0<t<t_0,\quad \big\Vert(1-\pi_k)(e^{-t\tilde q^w}u)\big\Vert_{L^2(\mathbb R^n)}\le\frac1{c'_2t^{m_2}}e^{-c_2t^{m_1}k^b}\Vert u\Vert_{L^2(\mathbb R^n)},
\end{equation}
hold. Then, there exists a positive constant $C>1$ such that the following observability estimate holds
\begin{equation}\label{08122017E5}
	\forall T>0, \forall u\in L^2(\mathbb R^n),\quad \big\Vert e^{-T\tilde q^w}u\big\Vert^2_{L^2(\mathbb R^n)}\le C\exp\Big(\frac C{T^{\frac{am_1}{b-a}}}\Big)\int_0^T\big\Vert e^{-t\tilde q^w}u\big\Vert^2_{L^2(\omega)}\ dt.
\end{equation}
\end{thm}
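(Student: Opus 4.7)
The plan is to follow the abstract Lebeau-Robbiano strategy, of which Theorem \ref{08122017T3} is the precise quantitative statement. Writing $v(t) = e^{-t \tilde q^w} u$, I set up a telescoping argument on a decreasing sequence of times $T = \tau_0 > \tau_1 > \tau_2 > \cdots > 0$ with $\tau_j \to 0$, together with an increasing sequence of frequency cut-offs $(k_j)_{j \ge 0}$ of geometric type $k_j = \rho^j$ for some $\rho > 1$ to be tuned. The key splitting at each step is $v(\tau_j) = \pi_{k_j} v(\tau_j) + (1-\pi_{k_j})v(\tau_j)$, with the low-frequency component controlled by the spectral inequality \eqref{08122017E3} and the high-frequency one by the dissipation estimate \eqref{08122017E4}. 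The fact that $(e^{-t \tilde q^w})_{t \ge 0}$ is a contraction semigroup on $L^2(\mathbb R^n)$, ensured by the accretivity coming from $\Reelle \tilde q \ge 0$, is used throughout to propagate $L^2$ norms.

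For the low-frequency block I apply the spectral inequality pointwise in $t$ on the window $I_j = [\tau_{j+1}, \tau_j]$ of length $\delta_j = \tau_j - \tau_{j+1}$, and integrate in time to convert the estimate into an integral against the observation measure. Using the contraction $\|v(\tau_j)\|_{L^2} \le \|v(t)\|_{L^2}$ for $t \le \tau_j$, and absorbing the auxiliary term $\|(1-\pi_{k_j})v(t)\|_{L^2(\omega)}$ by the dissipation, I obtain
$$\|\pi_{k_j} v(\tau_j)\|_{L^2}^2 \le \frac{C}{\delta_j} e^{2c_1 k_j^a} \int_{I_j} \|v(t)\|_{L^2(\omega)}^2 \, dt + \varepsilon_j \|v(\tau_{j+1})\|_{L^2}^2,$$
with $\varepsilon_j$ small. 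For the high-frequency block, writing $v(\tau_j) = e^{-\delta_j \tilde q^w} v(\tau_{j+1})$ and applying \eqref{08122017E4} at time $\delta_j$ and level $k_j$ yields
$$\|(1-\pi_{k_j})v(\tau_j)\|_{L^2} \le \frac{1}{c'_2 \delta_j^{m_2}} e^{-c_2 \delta_j^{m_1} k_j^b} \|v(\tau_{j+1})\|_{L^2}.$$
Combining these bounds, squaring, and using $\|v(\tau_j)\|^2 \le 2\|\pi_{k_j} v(\tau_j)\|^2 + 2\|(1-\pi_{k_j})v(\tau_j)\|^2$ gives a recursion of the form
$$\|v(\tau_j)\|_{L^2}^2 \le A_j \int_{I_j} \|v(t)\|_{L^2(\omega)}^2 \, dt + B_j \|v(\tau_{j+1})\|_{L^2}^2,$$
with $A_j \lesssim \delta_j^{-1} e^{2c_1 k_j^a}$ and $B_j \lesssim \delta_j^{-2m_2} e^{-2c_2 \delta_j^{m_1} k_j^b}$.

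The crux is now to calibrate $\delta_j$ and $\rho$. Choosing $\delta_j = c_0 k_j^{(a-b)/m_1}$ turns the dissipative exponent into $\delta_j^{m_1} k_j^b = c_0^{m_1} k_j^a$, which matches the spectral growth scale. Fixing $c_0$ small and $\rho > 1$ large enough so that $2 c_2 c_0^{m_1} k_j^a - 2 c_1 k_{j-1}^a$ stays positive and eventually grows geometrically in $j$, one arranges $B_j$ to be doubly-exponentially small while $\prod_{i < j} B_i \cdot A_j$ remains uniformly bounded by $C \exp(C / T^{am_1/(b-a)})$; here the restriction $b > a$ is exactly what makes the series $\sum_{j \ge 0} \delta_j$ converge, and the final total time is $T = \sum_j \delta_j$ (with a scaling argument allowing arbitrary $T > 0$). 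Iterating the recursion and using that the intervals $(I_j)_j$ partition $(0, T)$, I deduce
$$\|v(T)\|_{L^2}^2 \le \Big(\sup_{j \ge 0} A_j \prod_{i < j} B_i\Big) \int_0^T \|v(t)\|_{L^2(\omega)}^2 \, dt.$$
The exponent $am_1/(b-a)$ emerges from the balance $T \simeq k_0^{(a-b)/m_1}$ and the worst spectral cost $\exp(c_1 k_0^a) \simeq \exp(C/T^{am_1/(b-a)})$.

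The principal technical obstacle is the simultaneous calibration of the geometric ratio $\rho$, the constant $c_0$, and the absorption of the cross terms (the contribution $\varepsilon_j\|v(\tau_{j+1})\|^2$ and the polynomial prefactor $\delta_j^{-2m_2}$). All of these are handled by choosing $\rho$ large enough that the dissipative decay $e^{-2c_2 c_0^{m_1}(k_j^a - k_{j-1}^a)}$ swallows any polynomial-in-$\delta_j$ factor at level $j$, and then invoking $b > a$ to ensure summability at every order. The careful bookkeeping of the constants, which is the content of \cite{BEPS} (Theorem 3.2), produces exactly the exponent $am_1/(b-a)$ in \eqref{08122017E5}.
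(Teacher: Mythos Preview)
Your proposal goes well beyond what the paper actually does. In the paper, Theorem~\ref{08122017T3} is not proved from scratch: it is stated as the specialization of \cite{BEPS} (Theorem~3.2) to fixed control supports and semigroups, and the only argument supplied is the short extension from small times $0<T\le T_0$ (which is all that \cite{BEPS} literally yields) to arbitrary $T>0$, using the contraction property of $(e^{-t\tilde q^w})_{t\ge0}$. So the paper's ``proof'' is two lines: cite the reference, then monotonicity in $T$.

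What you wrote is instead a sketch of the Lebeau--Robbiano iteration that underlies \cite{BEPS} (Theorem~3.2) itself. The outline is sound: the telescoping on a geometric sequence of times and frequencies, the splitting $v(\tau_j)=\pi_{k_j}v(\tau_j)+(1-\pi_{k_j})v(\tau_j)$, the calibration $\delta_j\sim k_j^{(a-b)/m_1}$ that matches the dissipative and spectral exponents, and the emergence of the cost exponent $am_1/(b-a)$ from $T\sim k_0^{(a-b)/m_1}$ are all correct. You also correctly flag the absorption of the cross term $\varepsilon_j\|v(\tau_{j+1})\|^2$ and of the polynomial prefactor $\delta_j^{-2m_2}$ as the places where the bookkeeping is delicate, and you defer those details to \cite{BEPS}. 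That is a fair summary of the argument, though as a self-contained proof it remains a sketch: the precise choice of $\rho$ and $c_0$ ensuring simultaneously that $\prod_{i<j}B_i\cdot A_j$ is bounded, that $\sum_j\delta_j$ matches a prescribed $T$, and that the absorption constants close, requires the explicit computations carried out in \cite{BEPS}. In short, your approach is the right one if the goal is to \emph{prove} the theorem; the paper's approach is simply to \emph{quote} it.
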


Notice that under the assumptions of Theorem \ref{08122017T3}, a strict application of \cite{BEPS} (Theorem 3.2) only provides the existence of positive constants $C_1>0$ and $T_0>0$ such that for all $0<T\le T_0$ and $u\in L^2(\mathbb R^n)$,
\begin{equation}\label{29082019E1}
	\big\Vert e^{-T\tilde q^w}u\big\Vert^2_{L^2(\mathbb R^n)}\le\exp\Big(\frac{C_1}{T^{\frac{am_1}{b-a}}}\Big)\int_0^T\big\Vert e^{-t\tilde q^w}u\big\Vert^2_{L^2(\omega)}\ dt.
\end{equation}
However, setting $C_2 = \exp(C_1/T^{\frac{am_1}{b-a}}_0)>1$ and by using the contractivity property of the semigroup $(e^{-t\tilde q^w})_{t\geq0}$, we deduce from \eqref{29082019E1} that for all $T>T_0$ and $u\in L^2(\mathbb R^n)$,
\begin{multline}\label{29082019E2}
\Vert e^{-T\tilde q^w}u\big\Vert^2_{L^2(\mathbb R^n)}\le \Vert e^{-T_0\tilde q^w}u\big\Vert^2_{L^2(\mathbb R^n)}
\le C_2\int_0^{T_0}\big\Vert e^{-t\tilde q^w}u\big\Vert^2_{L^2(\omega)}\ dt \\
\le C_2\exp\Big(\frac{C_1}{T^{\frac{am_1}{b-a}}}\Big)\int_0^T\big\Vert e^{-t\tilde q^w}u\big\Vert^2_{L^2(\omega)}\ dt.
\end{multline}
The estimates \eqref{29082019E1} and \eqref{29082019E2} therefore imply that 
$$\forall T>0, \forall u\in L^2(\mathbb R^n),\quad \big\Vert e^{-T\tilde q^w}u\big\Vert^2_{L^2(\mathbb R^n)}\le C\exp\Big(\frac C{T^{\frac{am_1}{b-a}}}\Big)\int_0^T\big\Vert e^{-t\tilde q^w}u\big\Vert^2_{L^2(\omega)}\ dt,$$
with $C= \sup(C_1,C_2)>1$, which is the desired observability estimate \eqref{08122017E5}.

Let $\pi_k:L^2(\mathbb R^n)\rightarrow E_k$, $k\geq1$, be the orthogonal frequency cutoff projection onto the closed subspace
\begin{equation}\label{23042018E1}
	E_k = \big\{u\in L^2(\mathbb R^n),\quad \Supp\widehat u\subset[-k,k]^n\big\}.
\end{equation}
According to Theorem \ref{08122017T3}, it is sufficient to prove a spectral inequality as \eqref{08122017E3} and a dissipation estimate as \eqref{08122017E4} for the orthogonal projections $\pi_k$ to obtain the observability estimate \eqref{08122017E9}.

\subsection{Spectral inequality} The following theorem is proved by O. Kovrijkine in \cite{MR1840110} (Theorem 3):

\begin{thm}\label{08122017T2} There exists a universal constant $K$ depending only on the dimension $n$ that may be assumed to be greater or equal to $e$ such that for any parallelepiped $J$ with sides parallel to the coordinate axis and of positive lengths $b_1,\ldots,b_n$ and $\omega$ be a $(\gamma,a)$-thick set, then
$$\forall u\in L^2(\mathbb R^n),\ \Supp\widehat{u}\subset J,\quad\Vert u\Vert_{L^2(\mathbb R^n)}\le \left(\frac{K^n}{\gamma}\right)^{K(\langle a,b\rangle + n)}\Vert u\Vert_{L^2(\omega)},$$
where $b=(b_1,\ldots,b_n)$.
\end{thm}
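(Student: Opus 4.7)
The plan is to follow the classical Logvinenko--Sereda--Kovrijkine strategy: reduce to a local Remez-type inequality for bandlimited functions, and then stitch local estimates together using the thickness assumption to recover a global bound.

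First, I would perform an anisotropic rescaling $y_j = b_j x_j$ to reduce to the case where $\widehat{u}$ is supported in a fixed normalized parallelepiped (say $[-1,1]^n$). Under this rescaling, the $(\gamma,a)$-thick set $\omega$ is transformed into a new thick set whose parameters become $\gamma$ and $(a_1 b_1,\ldots,a_n b_n)$. This explains the appearance of the pairing $\langle a,b\rangle=\sum_j a_j b_j$ in the final exponent: the ``volume'' of the reference box in the rescaled variable is $\prod_j a_j b_j$, but what controls the cost of the Remez step is the sum $\sum_j a_j b_j$ because Bernstein-type derivative bounds act coordinatewise.

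Second, by the Paley--Wiener theorem $u$ extends to an entire function of exponential type in $\mathbb{C}^n$, with type $b_j$ in the $j$-th complex variable. I would tile $\mathbb{R}^n$ by translates of the reference box $P=[0,a_1]\times\cdots\times[0,a_n]$: thickness guarantees $\vert\omega\cap(x+P)\vert\ge\gamma\prod_j a_j$ for every $x\in\mathbb{R}^n$. The heart of the proof is then a \emph{local} Remez-type inequality: for any $u$ bandlimited to $J$ and any measurable subset $E\subset x+P$ with $\vert E\vert\ge\gamma\vert P\vert$, one has
\begin{equation*}
\Vert u\Vert_{L^2(x+P)}\le\Bigl(\frac{K^n}{\gamma}\Bigr)^{K(\langle a,b\rangle+n)}\Vert u\Vert_{L^2(E)}.
\end{equation*}
This is proved by iterating a one-dimensional Turán/Remez inequality along each coordinate axis: on each slice, the restriction of $u$ is a one-variable function of exponential type $b_j$, and one uses the fact that such a function can be compared to a polynomial of degree essentially $a_j b_j$ (via a Bernstein-type saturation argument on an interval of length $a_j$), so that the one-dimensional Remez constant enters with exponent proportional to $a_j b_j$. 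A Fubini/pigeonhole argument, applied to the ``good'' slices where $E$ has sufficient one-dimensional measure, then chains these $n$ one-dimensional estimates together; the extra additive $n$ in the exponent absorbs the pigeonholing losses.

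Third, summing the squared local inequalities over all boxes of the tiling gives
\begin{equation*}
\Vert u\Vert_{L^2(\mathbb{R}^n)}^2=\sum_{x}\Vert u\Vert_{L^2(x+P)}^2\le\Bigl(\frac{K^n}{\gamma}\Bigr)^{2K(\langle a,b\rangle+n)}\sum_{x}\Vert u\Vert_{L^2(\omega\cap(x+P))}^2\le\Bigl(\frac{K^n}{\gamma}\Bigr)^{2K(\langle a,b\rangle+n)}\Vert u\Vert_{L^2(\omega)}^2,
\end{equation*}
which is the desired conclusion after taking square roots. The main obstacle is the local Remez step: obtaining the sharp linear dependence of the exponent on $\langle a,b\rangle$ (rather than on the much larger product $\prod_j a_j b_j$) forces one to exploit the entire structure coordinate by coordinate and to use the sharp one-dimensional Remez--Turán constants of the form $(C/\gamma)^{\text{degree}+\mathrm{const}}$. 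Tracking that the universal constant $K$ is dimension-dependent but independent of $u$, $\gamma$, $a$, $b$ is then a matter of careful bookkeeping once the iteration is set up.
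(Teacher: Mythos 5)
The paper does not prove this statement; it is quoted verbatim as Theorem~3 of Kovrijkine's article \cite{MR1840110} and used as a black box, so there is no in-paper argument to compare your proposal against. What I can do is assess your sketch on its own merits as an account of the Kovrijkine argument.

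Your outline recalls the overall Logvinenko--Sereda--Kovrijkine strategy, but there is a real gap at the step you describe as the heart of the proof. You assert a local Remez-type inequality
$\Vert u\Vert_{L^2(x+P)}\le(K^n/\gamma)^{K(\langle a,b\rangle+n)}\Vert u\Vert_{L^2(E)}$
valid for \emph{every} translate $x+P$ of the reference box and every $E\subset x+P$ of relative density $\ge\gamma$, and you then sum the squared local estimates over the whole tiling. This uniform local bound is not available as stated. When one approximates a bandlimited $u$ on a fixed box by a Taylor polynomial and invokes the polynomial Remez inequality, the degree to which one must truncate is governed (via Bernstein) by the size of $\sup_{x+P}\vert u\vert$ relative to $\vert u(x_0)\vert$ for a Taylor center $x_0$ in the box; Bernstein's inequality controls the numerator by the \emph{global} $L^\infty$ (or $L^2$) norm of $u$, not the local one. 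On a box where $u$ is anomalously small compared to its global norm, this ratio blows up, the required truncation degree exceeds $\langle a,b\rangle$, and the Remez constant escapes the claimed bound. Kovrijkine's argument repairs exactly this: he first selects a family of ``good'' boxes -- those on which the complex-analytic continuation of $u$ is controlled by the \emph{local} $L^2$-average of $u$ on that box, a pigeonhole consequence of Bernstein's inequality and the covering -- proves that the good boxes carry at least half of $\Vert u\Vert_{L^2(\mathbb{R}^n)}^2$, and applies the polynomial Remez inequality, with degree $\sim\langle a,b\rangle$, only on good boxes. Without this good-box selection the summation in your final display is unjustified, since it implicitly uses the local estimate on bad boxes as well. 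The anisotropic rescaling and the concluding bookkeeping are fine; the good-box selection is the essential idea your sketch omits.
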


Let $u\in L^2(\mathbb R^n)$ and $k\geq1$. It follows from \eqref{23042018E1} (the definition of $\pi_k$) that $\pi_ku\in L^2(\mathbb R^n)$ and $\widehat{\pi_ku}$ is supported in $[-k,k]^n$. Therefore, we deduce from Theorem \ref{08122017T2} that
\begin{equation}\label{08122017E21}
	\forall u\in L^2(\mathbb R^n), \forall k\geq1,\quad \Vert\pi_ku\Vert_{L^2(\mathbb R^n)}\le c'_1e^{c_1k}\Vert\pi_ku\Vert_{L^2(\omega)},
\end{equation}
where the two positive constants $c_1>0$ and $c'_1>0$ are given by
$$c_1 = 2K\vert a\vert\ln\bigg(\frac{K^n}{\gamma}\bigg)>0,\quad c'_1 = \bigg(\frac{K^n}{\gamma}\bigg)^{nK}>0.$$

\subsection{Dissipation estimate} In order to prove the dissipation estimate \eqref{08122017E4}, we need the following lemma, whose proof is taken from \cite{MR2668420} (Proposition 6.1.5) and recalled for the convenience of the reader:

\begin{lem}\label{19042018L2} There exist some positive constants $C_1>0$ and $C_2>0$ only depending on the dimension $n$ such that for all $\Lambda_1,\Lambda_2>0$ and $f\in L^2(\mathbb R^n)$ satisfying
\begin{equation}\label{19042018E4}
	\forall\alpha\in\mathbb N^n,\quad \Vert \partial_x^{\alpha}f\Vert_{L^2(\mathbb R^n)}\le \Lambda_1 \Lambda_2^{\vert\alpha\vert} (\alpha!)^{\frac12},
\end{equation}
we have
$$\big\Vert e^{C_2\Lambda_2^{-2}\vert D_x\vert^2}f\big\Vert_{L^2(\mathbb R^n)}\le C_1\Lambda_1.$$
\end{lem}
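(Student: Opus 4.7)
The plan is to prove the estimate on the Fourier side, where the exponential Fourier multiplier $e^{C_2\Lambda_2^{-2}|D_x|^2}$ becomes multiplication by $e^{C_2\Lambda_2^{-2}|\xi|^2}$, and then expand this weight as a power series in $|\xi|^2$, matching it term-by-term against the polynomial-weighted $L^2$-bounds on $\widehat f$ that follow from the hypothesis. The key analytic input is just Plancherel combined with the multinomial identity $|\xi|^{2k} = \sum_{|\alpha|=k}\frac{k!}{\alpha!}\xi^{2\alpha}$.

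More precisely, first I would translate the hypothesis via Plancherel into monomial bounds on $\widehat f$, namely $\|\xi^\alpha\widehat f\|_{L^2(\mathbb R^n)} \le c_n\,\Lambda_1\Lambda_2^{|\alpha|}(\alpha!)^{1/2}$ for all $\alpha\in\mathbb N^n$, with $c_n$ a normalization constant depending only on $n$. By Plancherel again, it suffices to bound the quantity $\int_{\mathbb R^n}e^{2C_2\Lambda_2^{-2}|\xi|^2}|\widehat f(\xi)|^2\,d\xi$. Expanding the exponential and using the multinomial formula gives the pointwise identity
$$e^{2C_2\Lambda_2^{-2}|\xi|^2} = \sum_{k\geq 0}\frac{(2C_2\Lambda_2^{-2})^k}{k!}|\xi|^{2k} = \sum_{\alpha\in\mathbb N^n}\frac{(2C_2\Lambda_2^{-2})^{|\alpha|}}{\alpha!}\,\xi^{2\alpha},$$
which is a series of non-negative terms, so Tonelli's theorem justifies interchanging sum and integral to obtain
$$\int_{\mathbb R^n}e^{2C_2\Lambda_2^{-2}|\xi|^2}|\widehat f(\xi)|^2\,d\xi = \sum_{\alpha\in\mathbb N^n}\frac{(2C_2\Lambda_2^{-2})^{|\alpha|}}{\alpha!}\,\|\xi^\alpha\widehat f\|_{L^2(\mathbb R^n)}^2.$$

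Then I would insert the hypothesis. The factorials $\alpha!$ cancel exactly with the $\alpha!$ coming from $\|\xi^\alpha\widehat f\|_{L^2}^2 \le c_n^2\Lambda_1^2\Lambda_2^{2|\alpha|}\alpha!$, and the $\Lambda_2$-factors cancel likewise, leaving
$$\sum_{\alpha\in\mathbb N^n}\frac{(2C_2\Lambda_2^{-2})^{|\alpha|}}{\alpha!}\,\|\xi^\alpha\widehat f\|_{L^2}^2 \leq c_n^2\Lambda_1^2\sum_{\alpha\in\mathbb N^n}(2C_2)^{|\alpha|} = c_n^2\Lambda_1^2\sum_{k\geq 0}\binom{n+k-1}{n-1}(2C_2)^k.$$
The crucial point is that the resulting series no longer depends on $\Lambda_1,\Lambda_2$ or $f$, only on $n$ and $C_2$. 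I would finish by choosing $C_2$ small enough, e.g.\ $C_2=1/4$, so that $2C_2<1$ and the series converges; setting $C_1$ to be $c_n$ times the square root of this explicit constant completes the proof.

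There is no real obstacle: this is a clean Gevrey-$1/2$ duality argument, and the only points to be careful about are the Plancherel normalization constants (absorbed into $c_n$) and the justification of swapping the integral with the infinite sum (handled by Tonelli, since every term is non-negative). The necessity of taking $C_2<1/2$ is what forces the constant to depend on the dimension in a controlled, explicit way.
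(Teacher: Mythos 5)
Your proof is correct. The broad strategy is the same as the paper's (pass to the Fourier side via Plancherel, expand the Gaussian weight as a power series, bound termwise using the factorial decay in the hypothesis, and choose $C_2$ small enough for convergence), but the implementation is genuinely different in a way worth noting. The paper expands $e^{c\vert\xi\vert^2}$ in powers of $\vert\xi\vert^{2N}$ and uses the triangle inequality in $L^2$ on $\sum_N \frac{c^N}{N!}\,\vert\xi\vert^{2N}\widehat f$; this forces it to convert the multi-index bound $\Vert\xi^\alpha\widehat f\Vert$ into a radial bound on $\Vert\,\vert\xi\vert^{2N}\widehat f\,\Vert$ (picking up factors $n^N$ and binomial counts) and then to pass from $(2N)^N$ back to $N!$ via the Stirling-type estimate $N^N\le e^N N!$, resulting in the dimension-dependent constant $C_2 = 1/(16en^2)$. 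You instead expand $e^{2c\vert\xi\vert^2}$ in monomials $\xi^{2\alpha}$ via the multinomial identity, and apply Tonelli to $\Vert e^{c\vert\xi\vert^2}\widehat f\Vert_{L^2}^2$ itself, where every summand is non-negative; the factorials $\alpha!$ and the powers of $\Lambda_2$ then cancel exactly, leaving the clean series $\sum_\alpha(2C_2)^{\vert\alpha\vert} = \sum_k\binom{n+k-1}{n-1}(2C_2)^k$, which converges for any $C_2<1/2$. This buys you a slightly sharper conclusion: the constant $C_2$ can be taken independent of $n$ (only $C_1$ depends on the dimension), whereas the paper's $C_2$ degrades like $n^{-2}$. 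Both proofs are complete; yours simply avoids the lossy passage through radial powers.
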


\begin{proof} Let $f\in L^2(\mathbb R^n)$ satisfying \eqref{19042018E4}, where $\Lambda_1,\Lambda_2>0$. First, it follows from \eqref{19042018E4}, \eqref{20062018E1}, \eqref{24112017E3}, \eqref{24042018E5} and the Plancherel theorem that for all $N\in\mathbb N$,
\begin{multline}\label{20032018E1}
	\big\Vert\vert\xi\vert^N\widehat f\big\Vert_{L^2(\mathbb R^n)}
	\le n^N\sum_{\vert\alpha\vert = N}\big\Vert\xi^{\alpha}\widehat f\big\Vert_{L^2(\mathbb R^n)} 
	\le n^N(2\pi)^{\frac n2}\sum_{\vert\alpha\vert = N}\big\Vert\partial^{\alpha}_xf\big\Vert_{L^2(\mathbb R^n)} \\[5pt]
	\le n^N(2\pi)^{\frac n2}\sum_{\vert\alpha\vert = N}\Lambda_1 \Lambda_2^{\vert\alpha\vert} (\alpha!)^{\frac 12}
	\le n^N(2\pi)^{\frac n2}\binom{N+n-1}N\Lambda_1 \Lambda_2^N (N!)^{\frac 12} \\
	\le n^N(2\pi)^{\frac n2}2^{n+N-1}\Lambda_1\Lambda_2^N (N!)^{\frac 12}
	\le 2^{n-1}(2\pi)^{\frac n2}\Lambda_1(2n\Lambda_2)^N N^{\frac N2},
\end{multline}
since $\alpha!\le N!$ for all $\alpha\in\mathbb N^n$ such that $\vert\alpha\vert = N$. We deduce from \eqref{20032018E1} that
\begin{multline}\label{11122017E1}
	\Big\Vert\exp\left[\frac{\vert\xi\vert^2}{16en^2\Lambda_2^2}\right]\widehat f\Big\Vert_{L^2(\mathbb R^n)}
	\le \sum_{N=0}^{+\infty}\frac1{2^N}\frac1{(2n\Lambda_2)^{2N}}\frac1{(2e)^NN!}\big\Vert\vert\xi\vert^{2N}\widehat f\big\Vert_{L^2(\mathbb R^n)} \\
	\le 2^{n-1}(2\pi)^{\frac n2}\Lambda_1 \sum_{N=0}^{+\infty}\frac1{2^N}\frac{(2N)^N}{(2e)^NN!}.
\end{multline}
Moreover, we have $N^N\le e^NN!$ for all $N\geq0$ from (0.3.12) in \cite{MR2668420}, and \eqref{11122017E1} implies that
\begin{equation}\label{20032018E2}
	\Big\Vert\exp\left[\frac{\vert \xi\vert^2}{16en^2\Lambda_2^2}\right]\widehat f\Big\Vert_{L^2(\mathbb R^n)}
	\le 2^n(2\pi)^{\frac n2}\Lambda_1.
\end{equation}
Setting $C_1 = 2^n$ and $C_2 = 1/(16en^2)$, we deduce from \eqref{20032018E2} and the Plancherel theorem that
$$\big\Vert e^{C_2\Lambda_2^{-2}\vert D_x\vert^2}f\big\Vert_{L^2(\mathbb R^n)}\le C_1\Lambda_1.$$
This ends the proof of Lemma \ref{19042018L2}.
\end{proof}

We can now derive a dissipation estimate for the semigroup $(e^{-tq^w})_{t\geq0}$. Since the quadratic form $q$ is diffusive and $S\subset\Ker(\Imag F)$, we deduce from Theorem \ref{20112017T1} and Definition \ref{2} that there exist some positive constants $C>1$ and $0<t_0<1$ such that for all $\alpha\in\mathbb N^n$, $0<t\le t_0$ and $u\in L^2(\mathbb R^n)$,
$$\big\Vert\partial^{\alpha}_x(e^{-tq^w}u)\big\Vert_{L^2(\mathbb R^n)}\le\frac{C^{1+\vert\alpha\vert}}{t^{(2k_0+1)(\vert\alpha\vert+s)}}\ (\alpha!)^{\frac12}\ \Vert u\Vert_{L^2(\mathbb R^n)},$$
where $0\le k_0\le 2n-1$ is the smallest integer satisfying \eqref{22062018E1} and $s = 9n/4+2\lfloor n/2\rfloor+3$ (see the remark before Theorem \ref{05122017T2}). It follows from Lemma \ref{19042018L2} that there exist some positive constants $C_1>0$ and $C_2>0$ such that for all $0<t\le t_0$ and $u\in L^2(\mathbb R^n)$,
$$\big\Vert e^{C_2t^{2(2k_0+1)}\vert D_x\vert^2}e^{-tq^w}u\big\Vert_{L^2(\mathbb R^n)}\le \frac{C_1}{t^{(2k_0+1)s}}\ \Vert u\Vert_{L^2(\mathbb R^n)}.$$
As a consequence, it follows that for all $k\geq1$, $0<t\le t_0$ and $u\in L^2(\mathbb R^n)$,
\begin{multline*}
	\big\Vert(1-\pi_k)e^{-tq^w}u\big\Vert_{L^2(\mathbb R^n)} 
	= \frac1{(2\pi)^{\frac n2}}\big\Vert\mathbbm1_{\mathbb R^{2n}\setminus[-k,k]^n}\ \widehat{e^{-tq^w}u}\big\Vert_{L^2(\mathbb R^n)} \\[5pt]
	\le e^{-C_2t^{2(2k_0+1)}k^2}\big\Vert e^{C_2t^{2(2k_0+1)}\vert D_x\vert^2}e^{-tq^w}u\big\Vert_{L^2(\mathbb R^n)}
	\le \frac{C_1}{t^{(2k_0+1)s}}\ e^{-C_2t^{2(2k_0+1)}k^2}\Vert u\Vert_{L^2(\mathbb R^n)}.
\end{multline*}
Setting $c_2 = C_2$ and $c'_2 = \frac1{C_1}$, we obtain that for all $k\geq1$, $0<t\le t_0$ and $u\in L^2(\mathbb R^n)$,
\begin{equation}\label{08122017E20}
	\big\Vert(1-\pi_k)e^{-tq^w}u\big\Vert_{L^2(\mathbb R^n)}\le\frac1{c'_2t^{(2k_0+1)s}}\ e^{-c_2t^{2(2k_0+1)}k^2}\Vert u\Vert_{L^2(\mathbb R^n)}.
\end{equation}

\subsection{Proof of Theorem \ref{08122017T1}} We deduce from \eqref{08122017E21}, \eqref{08122017E20} and Theorem \ref{08122017T3} that there exists a positive constant $C>1$ such that for all $T>0$ and $u\in L^2(\mathbb R^n)$,
$$\big\Vert e^{-Tq^w}u\big\Vert^2_{L^2(\mathbb R^n)}\le C\exp\Big(\frac C{T^{2(2k_0+1)}}\Big)\int_0^T\big\Vert e^{-tq^w}u\big\Vert^2_{L^2(\omega)}\ dt.$$
It proves the observability estimate \eqref{08122017E9} and ends the proof of Theorem \ref{08122017T1}.

\section{Application to generalized Ornstein-Uhlenbeck operators}
\label{GOU}

In this section, we consider the generalized Ornstein-Uhlenbeck operators
\begin{align}\label{20042018E17}
	P & = \frac12\sum_{i,j=1}^nq_{i,j}\partial^2_{x_i,x_j} - \frac12\sum_{i,j=1}^nr_{i,j}x_ix_j + \sum_{i,j=1}^nb_{i,j}x_j\partial_{x_i} \\
	& = \frac12\Tr(Q\nabla^2_x) - \frac12\langle Rx,x\rangle + \langle Bx,\nabla_x\rangle, \nonumber
\end{align}
equipped with the domain
$$D(P) = \big\{u\in L^2(\mathbb R^n),\quad Pu\in L^2(\mathbb R^n)\big\},$$
where $x\in\mathbb R^n$, $n\geq1$, $Q = (q_{i,j})_{1\le i,j\le n}$, $R=(r_{i,j})_{1\le i,j\le n}$ and $B=(b_{i,j})_{1\le i,j\le n}$ are real $n\times n$ matrices, with $Q$ and $R$ symmetric positive semidefinite. The notation $\Tr$ denotes the trace.

In the case when $R=0$, these operators are Ornstein-Uhlenbeck operators and have been studied in many works as \cite{MR2729292, MR2505366, MR2257846, MR2313847, MR1475774, MR1941990,  MR3342487}. We recall from these works that the assumption of hypoellipticity is then characterized by the following equivalent assertions: \\[5pt]
\textbf{1.} The Ornstein-Uhlenbeck operator $\frac12\Tr(Q\nabla^2_x) + \langle Bx,\nabla_x\rangle$ is hypoelliptic. \\[5pt]
\textbf{2.} The symmetric positive semidefinite matrices $$Q_t = \int_0^te^{sB}Qe^{sB^T}\ ds,$$ are nonsingular for some (equivalently, for all) $t>0$, i.e. $\det Q_t>0$. \\[5pt]
\textbf{3.} The Kalman rank condition holds: 
\begin{equation}\label{11122017E5}
	\Rank[B\ \vert\ Q^{\frac12}] = n,
\end{equation}
where $$[B\ \vert\ Q^{\frac12}] = [Q^{\frac12},BQ^{\frac12},\ldots,B^{n-1}Q^{\frac12}],$$
is the $n\times n^2$ matrix obtained by writing consecutively the columns of the matrices $B^jQ^{\frac12}$, with $Q^{\frac12}$ the symmetric positive semidefinite matrix given by the square root of $Q$. \\[5pt]
\textbf{4.} The H\"ormander condition holds: $$\forall x\in\mathbb R^n,\quad \Rank\mathcal{L}(X_1,X_2,\ldots,X_n,Y_0)(x) = n,$$
with $$Y_0 = \langle Bx,\nabla_x\rangle,\quad X_i = \sum_{j=1}^nq_{i,j}\partial_{x_j},\quad i=1,\ldots,n,$$
where $\mathcal{L}(X_1,X_2,\ldots,X_n,Y_0)(x)$ denotes the Lie algebra generated by the vector fields 
$$X_1,X_2,\ldots,X_n\quad \text{and}\quad Y_0$$
at point $x\in\mathbb R^n$. \\[5pt]
When the Ornstein-Uhlenbeck operator $\frac12\Tr(Q\nabla^2_x) + \langle Bx,\nabla_x\rangle$ is hypoelliptic, the associated Markov semigroup $(T(t))_{t\geq0}$ has the following explicit representation due to Kolmogorov \cite{MR1503147}:
$$(T(t)f)(x) = \frac1{(2\pi)^{\frac n2}\sqrt{\det Q_t}}\int_{\mathbb R^n}e^{-\frac12\langle Q^{-1}_ty,y\rangle}f(e^{tB}x-y)\ dy,\quad t>0.$$

We consider here the general case when $R$ is a symmetric positive semidefinite matrix and we study the partial Gelfand-Shilov smoothing properties of the semigroup $(e^{tP})_{t\geq0}$ and the null-controllability of generalized Ornstein-Uhlenbeck equations.

We first observe that $P$ writes as
\begin{equation}\label{20042018E16}
	P = -q^w(x,D_x) - \frac12\Tr(B),
\end{equation}
where $q^w(x,D_x)$ is the quadratic operator associated to the following complex-valued quadratic form:
\begin{equation}\label{20042018E15}
	q(x,\xi) = \frac12\vert Q^{\frac12}\xi\vert^2 + \frac12\vert R^{\frac12}x\vert^2 - i\langle Bx,\xi\rangle, \quad (x,\xi)\in\mathbb R^{2n}.
\end{equation}
Since the polar form of $q$ is given for all $(x,\xi),(y,\eta)\in\mathbb R^{2n}$ by
\begin{align*}
	q((x,\xi),(y,\eta)) & = \frac12\langle Q\xi,\eta\rangle + \frac12\langle Rx,y\rangle - \frac{i}{2}(\langle Bx,\eta\rangle + \langle By,\xi\rangle) \\[5pt]
	& = \frac12\ \sigma((x,\xi),(-iBy + Q\eta, iB^T\eta - Ry)),
\end{align*}
the Hamilton map and the singular space of $q$ are respectively given by
\begin{gather}\label{20042018E12}
F = \frac12\begin{pmatrix}
	-iB & Q \\
	-R & iB^T
\end{pmatrix}
\end{gather}
and
\begin{equation}\label{27062018E1}
	S = \Bigg[\bigcap_{j=0}^{2n-1}\Ker(RB^j)\Bigg]\times\Bigg[\bigcap_{j=0}^{2n-1}\Ker(Q(B^T)^j)\Bigg].
\end{equation}
The Cayley-Hamilton theorem applied to the matrices $B$ and $B^T$ shows that for all $j\geq0$ and $X\in\mathbb R^n$,
$$B^jX\in\Span(X,BX,\ldots,B^{n-1}X),$$
and
$$(B^T)^jX\in\Span(X,B^TX,\ldots,(B^T)^{n-1}X).$$
It follows that the singular space reduces to
\begin{equation}\label{24042018E6}
	S = \Bigg[\bigcap_{j=0}^{n-1}\Ker(RB^j)\Bigg]\times\Bigg[\bigcap_{j=0}^{n-1}\Ker(Q(B^T)^j)\Bigg].
\end{equation}

According to \eqref{20042018E16},  the smoothing properties of the semigroup $(e^{tP})_{t\geq0}$ are the very same as the ones of the semigroup $(e^{-tq^w})_{t\geq0}$, where $q:\mathbb R^n_x\times\mathbb R^n_{\xi}\rightarrow\mathbb{C}$ is the complex-valued quadratic form given by \eqref{20042018E15}, and Theorem \ref{20112017T1} allows to prove that under suitable algebraic conditions for the matrices $Q$, $R$ and $B$, the semigroup $(e^{-tq^w})_{t\geq0}$ enjoys partial Gelfand-Shilov regularizing effects:

\begin{thm}\label{13122017T1} Let $P$ be the generalized Ornstein-Uhlenbeck operator defined in \eqref{20042018E17}. We assume that there exist some subsets $I,J\subset\{1,\ldots,n\}$ such that
\begin{equation}\label{20042018E14}
	\Bigg(\Bigg[\bigcap_{j=0}^{n-1}\Ker(RB^j)\Bigg]\times\Bigg[\bigcap_{j=0}^{n-1}\Ker(Q(B^T)^j)\Bigg]\Bigg)^{\perp} = \mathbb R^n_I\times\mathbb R^n_J,
\end{equation}
the orthogonality being taken with respect to the canonical Euclidean structure of $\mathbb R^{2n}$. We also assume that the matrices $Q$, $R$ and $B$ satisfy
\begin{equation}\label{20042018E13}
	\Bigg[\bigcap_{j=0}^{n-1}\Ker(RB^j)\Bigg]\times\Bigg[\bigcap_{j=0}^{n-1}\Ker(Q(B^T)^j)\Bigg]\subset\Ker B\times\Ker B^T.
\end{equation}
Then, there exist some positive constants $C>1$ and $0<t_0<1$ such that for all $(\alpha,\beta)\in\mathbb N^n_I\times\mathbb N^n_J$, $0<t\le t_0$ and $u\in L^2(\mathbb R^n)$,
$$\big\Vert x^{\alpha}\partial^{\beta}_x(e^{tP}u)\big\Vert_{L^2(\mathbb R^n)}\le\frac{C^{1+\vert\alpha\vert + \vert \beta\vert}}{t^{(2k_0+1)(\vert\alpha\vert + \vert\beta\vert + s)}}\ (\alpha!)^{\frac12}\ (\beta!)^{\frac12}\ \Vert u\Vert_{L^2(\mathbb R^n)},$$
where $0\le k_0\le n-1$ is the smallest integer satisfying
$$\Bigg[\bigcap_{j=0}^{n-1}\Ker(RB^j)\Bigg]\times\Bigg[\bigcap_{j=0}^{n-1}\Ker(Q(B^T)^j)\Bigg] = \Bigg[\bigcap_{j=0}^{k_0}\Ker(RB^j)\Bigg]\times\Bigg[\bigcap_{j=0}^{k_0}\Ker(Q(B^T)^j)\Bigg],$$
and $s = 9n/4+2\lfloor n/2\rfloor + 3$.
\end{thm}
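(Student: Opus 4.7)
The plan is to reduce Theorem \ref{13122017T1} to a direct application of Theorem \ref{20112017T1} through the identity \eqref{20042018E16}, so the bulk of the work is algebraic verification that the hypotheses transfer correctly.

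The starting point is the observation, already recorded in \eqref{20042018E16}, that $P=-q^w(x,D_x)-\tfrac12\Tr(B)$, where $q$ is the complex-valued quadratic form \eqref{20042018E15}. Since $\tfrac12\Tr(B)\in\mathbb R$ is a scalar, the semigroups factor as $e^{tP}=e^{-t\,\Tr(B)/2}\,e^{-tq^w}$, so $\Vert x^{\alpha}\partial_x^{\beta}(e^{tP}u)\Vert_{L^2}=e^{-t\,\Tr(B)/2}\,\Vert x^{\alpha}\partial_x^{\beta}(e^{-tq^w}u)\Vert_{L^2}$, and the prefactor $e^{-t\,\Tr(B)/2}$ stays bounded for $0<t\le t_0$. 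It therefore suffices to prove the estimate for the semigroup $(e^{-tq^w})_{t\geq 0}$, and the extra constant can be absorbed into $C$.

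To apply Theorem \ref{20112017T1} I need to check the three hypotheses on $q$. First, $\Reelle q(x,\xi)=\tfrac12\vert Q^{\frac12}\xi\vert^2+\tfrac12\vert R^{\frac12}x\vert^2\geq 0$ since $Q$ and $R$ are symmetric positive semidefinite. Second, the identification of the singular space: from the explicit Hamilton map \eqref{20042018E12}, one reads off $\Reelle F=\tfrac12\left(\begin{smallmatrix}0&Q\\-R&0\end{smallmatrix}\right)$ and $\Imag F=\tfrac12\left(\begin{smallmatrix}-B&0\\0&B^T\end{smallmatrix}\right)$, so that $(\Imag F)^j=2^{-j}\bigl(\begin{smallmatrix}(-B)^j&0\\0&(B^T)^j\end{smallmatrix}\bigr)$ and a short computation gives $\Ker(\Reelle F(\Imag F)^j)\cap\mathbb R^{2n}=\Ker(RB^j)\times\Ker(Q(B^T)^j)$. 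By Cayley--Hamilton applied to $B$ and $B^T$, taking the intersection from $j=0$ up to $n-1$ already stabilizes, which recovers \eqref{24042018E6} and also shows that the minimal integer $k_0$ appearing in Theorem \ref{20112017T1} coincides with the $k_0\le n-1$ of the present statement. In particular, hypothesis \eqref{20042018E14} is exactly the condition $S^{\perp}=\mathbb R^n_I\times\mathbb R^n_J$ required by Theorem \ref{20112017T1}. Third, one has $\Ker(\Imag F)=\Ker B\times\Ker B^T$, so assumption \eqref{20042018E13} is precisely $S\subset\Ker(\Imag F)$.

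With all hypotheses verified, Theorem \ref{20112017T1} produces constants $C'>1$ and $0<t_0<1$ such that for every $(\alpha,\beta)\in\mathbb N^n_I\times\mathbb N^n_J$, every $0<t\le t_0$ and every $u\in L^2(\mathbb R^n)$,
$$\bigl\Vert x^{\alpha}\partial_x^{\beta}(e^{-tq^w}u)\bigr\Vert_{L^2}\le\frac{(C')^{1+\vert\alpha\vert+\vert\beta\vert}}{t^{(2k_0+1)(\vert\alpha\vert+\vert\beta\vert+s)}}\,(\alpha!)^{\frac12}(\beta!)^{\frac12}\,\Vert u\Vert_{L^2},$$
with $s=9n/4+2\lfloor n/2\rfloor+3$. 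Multiplying by the bounded factor $e^{-t\Tr(B)/2}\le e^{t_0\vert\Tr(B)\vert/2}$ and absorbing it into a new constant $C>1$ yields the claimed inequality. Since the argument is a clean transfer once the algebraic identifications are in place, there is no essential obstacle; the only point requiring care is the bookkeeping for $k_0$, namely checking that the stabilization index from the matrix kernel intersection really matches the minimal index in the general formula \eqref{22062018E1} for this particular symbol.
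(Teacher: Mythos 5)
Your proof is correct and follows essentially the same route as the paper: verify the hypotheses of Theorem \ref{20112017T1} for the symbol $q$ in \eqref{20042018E15} using the Hamilton map \eqref{20042018E12} and the singular space identification \eqref{24042018E6}, then transfer the resulting estimate from $e^{-tq^w}$ to $e^{tP}$ via \eqref{20042018E16}. You include some algebraic verifications (reading off $\Reelle F$, $\Imag F$, computing $\Ker(\Reelle F(\Imag F)^j)$, the Cayley--Hamilton stabilization) that the paper has already established earlier in Section \ref{GOU} and simply cites, but the argument is the same.
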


\begin{proof} We consider the quadratic form $q:\mathbb R^n_x\times\mathbb R^n_{\xi}\rightarrow\mathbb C$ defined in \eqref{20042018E15}. It follows from \eqref{24042018E6} and \eqref{20042018E14} that the singular space $S$ of $q$ satisfies $S^{\perp} = \mathbb R^n_I\times\mathbb R^n_J$. Moreover, \eqref{20042018E12}, \eqref{24042018E6} and \eqref{20042018E13} imply that $S\subset \Ker(\Imag F)$, where $F$ denotes the Hamilton map of the quadratic form $q$. Notice from \eqref{27062018E1} and \eqref{24042018E6} that the smallest integer $0\le k_0\le n-1$ satisfying
$$\Bigg[\bigcap_{j=0}^{n-1}\Ker(RB^j)\Bigg]\times\Bigg[\bigcap_{j=0}^{n-1}\Ker(Q(B^T)^j)\Bigg] = \Bigg[\bigcap_{j=0}^{k_0}\Ker(RB^j)\Bigg]\times\Bigg[\bigcap_{j=0}^{k_0}\Ker(Q(B^T)^j)\Bigg],$$
is also the smallest integer $0\le k_0\le 2n-1$ satisfying
$$S = \Bigg[\bigcap_{j=0}^{k_0}\Ker(RB^j)\Bigg]\times\Bigg[\bigcap_{j=0}^{k_0}\Ker(Q(B^T)^j)\Bigg].$$
It therefore follows from Theorem \ref{20112017T1} that there exist some positive constants $C>1$ and $0<t_0<1$ such that for all $(\alpha,\beta)\in\mathbb N^n_I\times\mathbb N^n_J$, $0<t\le t_0$ and $u\in L^2(\mathbb R^n)$,
\begin{equation}\label{20042018E18}
	\big\Vert x^{\alpha}\partial^{\beta}_x(e^{-tq^w}u)\big\Vert_{L^2(\mathbb R^n)}\le\frac{C^{1+\vert\alpha\vert + \vert\beta\vert}}{t^{(2k_0+1)(\vert\alpha\vert + \vert\beta\vert + s)}}\ (\alpha!)^{\frac12}\ (\beta!)^{\frac12}\ \Vert u\Vert_{L^2(\mathbb R^n)},
\end{equation}
with $s = 9n/4+2\lfloor n/2\rfloor + 3$. Theorem \ref{13122017T1} is then a consequence of \eqref{20042018E16} and \eqref{20042018E18}.
\end{proof}

As an application of Theorem \ref{05122017T2}, we study the null-controllability of the parabolic equation
$$\left\{\begin{array}{l}
	\partial_t f(t,x) -Pf(t,x)  = u(t,x)\mathbbm{1}_{\omega}(x),\quad (t,x)\in(0,+\infty)\times\mathbb R^n, \\[5pt]
	f(0) = f_0\in L^2(\mathbb R^n),
\end{array}\right.$$
where $\omega\subset\mathbb R^n$ is a measurable subset with a positive Lebesgue measure and $P$ is the generalized Ornstein-Uhlenbeck operator defined in \eqref{20042018E17}. When $R = 0$ and $P$ stands for a hypoelliptic Ornstein-Uhlenbeck operator, J. Bernier and the author proved in \cite{AB} (Theorem 1.3) that this equation is null-controllable in any positive time, once the control subset $\omega\subset\mathbb R^n$ is thick. When $R\ne0$, we derive from Theorem \ref{05122017T2} the following result:

\begin{thm}\label{11122017T1} Let $B,Q$ and $R$ be some real $n\times n$ matrices, with $Q$ and $R$ symmetric semidefinite positive, such that $B$ and $Q^{\frac12}$ satisfy the Kalman rank condition \eqref{11122017E5}. We assume that there exists a subset $I\subset\{1,\ldots,n\}$ such that
\begin{equation}\label{05062019E1}
	\Bigg[\bigcap_{j=0}^{n-1}\Ker(RB^j)\Bigg]^{\perp} = \mathbb R^n_I,
\end{equation}
the orthogonality being taken with respect to the canonical Euclidean structure of $\mathbb R^n$.
We also assume that the matrices $R$ and $B$ satisfy
\begin{equation}\label{13122017E2}
	\bigcap_{j=0}^{n-1}\Ker(RB^j)\subset\Ker B.
\end{equation}
If $\omega\subset\mathbb R^n$ is a thick subset, then the parabolic equation 
\begin{equation}\label{23042018E2}
\left\{\begin{array}{l}
	\partial_t f(t,x) -Pf(t,x)  = u(t,x)\mathbbm{1}_{\omega}(x),\quad (t,x)\in(0+\infty)\times\mathbb R^n, \\[5pt]
	f(0) = f_0\in L^2(\mathbb R^n),
\end{array}\right.
\end{equation}
is null-controllable from the set $\omega$ in any positive time $T>0$, where $P$ stands for the generalized Ornstein-Uhlenbeck operator associated to $B$, $Q$ and $R$ defined in \eqref{20042018E17}.
\end{thm}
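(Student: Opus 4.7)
The plan is to realize Theorem \ref{11122017T1} as an application of Theorem \ref{05122017T2} to the quadratic form $q$ associated with $P$, after absorbing the harmless scalar term $-\tfrac{1}{2}\Tr(B)$ by an exponential change of the unknown.

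First, I would recall from \eqref{20042018E16} that $P = -q^w(x,D_x) - \tfrac{1}{2}\Tr(B)$, where $q$ is the quadratic form \eqref{20042018E15}. Since $Q,R$ are symmetric positive semidefinite, $\Reelle q(x,\xi) = \tfrac{1}{2}|Q^{1/2}\xi|^2 + \tfrac{1}{2}|R^{1/2}x|^2 \geq 0$, and the Hamilton map and singular space of $q$ are given by \eqref{20042018E12} and \eqref{24042018E6}. Then I would check that the two structural assumptions of Theorem \ref{05122017T2} hold:

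\textbf{Diffusivity.} The key linear-algebra observation is that the Kalman rank condition $\Rank[B\mid Q^{1/2}]=n$ is equivalent to $\bigcap_{j=0}^{n-1}\Ker(Q(B^T)^j)=\{0\}$. Indeed, $x\in\Ker(Q(B^T)^j)$ iff $Q^{1/2}(B^T)^j x=0$, and the intersection over $j=0,\ldots,n-1$ is trivial iff the linear map $x\mapsto(Q^{1/2}x,\ldots,Q^{1/2}(B^T)^{n-1}x)$ is injective, which by transposition amounts exactly to the Kalman rank condition. From \eqref{24042018E6} the singular space then reduces to $S=\bigl[\bigcap_{j=0}^{n-1}\Ker(RB^j)\bigr]\times\{0\}$, so that assumption \eqref{05062019E1} yields $S^{\perp} = \mathbb R^n_I\times\mathbb R^n_{\xi}$ as required by Definition~\ref{2}.

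\textbf{Inclusion $S\subset\Ker(\Imag F)$.} From \eqref{20042018E12}, $\Imag F = \tfrac12\Diag(-B,B^T)$. Any $(x,\xi)\in S$ has $\xi=0$ (so $B^T\xi=0$ is automatic) and $x\in\bigcap_{j=0}^{n-1}\Ker(RB^j)$, which by \eqref{13122017E2} is contained in $\Ker B$. Hence $(\Imag F)(x,\xi)=0$.

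Having verified both hypotheses, Theorem~\ref{05122017T2} applies to $q^w(x,D_x)$: for any thick $\omega\subset\mathbb R^n$, the equation $\partial_t g + q^w(x,D_x)g = v\,\mathbbm1_{\omega}$, $g(0)=g_0\in L^2(\mathbb R^n)$, is null-controllable in any positive time $T>0$. To deduce null-controllability of \eqref{23042018E2}, I would set $\lambda=\tfrac12\Tr(B)$ and perform the substitution $g(t,x)=e^{\lambda t}f(t,x)$: if $f$ solves $\partial_t f - Pf = u\,\mathbbm1_{\omega}$, then
\[
\partial_t g + q^w g = e^{\lambda t}(\partial_t f + q^w f + \lambda f) = e^{\lambda t}\,u\,\mathbbm1_{\omega},
\]
so $g$ satisfies the $q^w$-equation with control $\tilde u(t,x)=e^{\lambda t}u(t,x)$, same initial datum $f_0$, and $g(T)=0\Leftrightarrow f(T)=0$. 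Since the map $u\mapsto\tilde u$ is a bijection of $L^2((0,T)\times\mathbb R^n)$ preserving the support in $(0,T)\times\omega$, null-controllability of \eqref{23042018E2} follows.

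The only genuinely non-routine step is the dual reformulation of the Kalman rank condition used to collapse the second factor of the singular space; the rest is bookkeeping. An alternative route, which avoids the explicit change of variable, would be to observe that the bounded perturbation $-\tfrac12\Tr(B)\Id$ acting on $L^2(\mathbb R^n)$ changes the semigroup only by the scalar factor $e^{t\lambda}$, so observability for $(e^{-tq^w})_{t\geq 0}$ transfers to observability for $(e^{tP})_{t\geq0}$ at the cost of multiplying the observability constant by $e^{2\vert\lambda\vert T}$, and duality through the Hilbert Uniqueness Method closes the argument.
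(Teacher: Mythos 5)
Your proof is correct and follows essentially the same route as the paper's: the same change of unknowns $g=e^{\frac12\Tr(B)t}f$ to reduce to the $q^w$-equation, the same use of the Kalman rank condition to show $\bigcap_{j=0}^{n-1}\Ker(Q(B^T)^j)=\{0\}$ and hence $S=\bigl[\bigcap_{j=0}^{n-1}\Ker(RB^j)\bigr]\times\{0\}$, the same verification that $S$ is diffusive and $S\subset\Ker(\Imag F)$, and the same appeal to Theorem~\ref{05122017T2}. The minor difference in phrasing the Kalman/kernel equivalence (injectivity of $x\mapsto(Q^{1/2}x,\ldots,Q^{1/2}(B^T)^{n-1}x)$ versus $\Ker([B\mid Q^{1/2}]^T)=(\Ran[B\mid Q^{1/2}])^\perp$) is cosmetic.
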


\begin{proof} By using the change of unknowns $g(t,x)=e^{\frac12\Tr(B)t}f(t,x)$ and $v(t,x) = e^{\frac12\Tr(B)t}u(t,x)$, where $f$ is solution of \eqref{23042018E2} with control $u$, we notice from \eqref{20042018E16} that the result of Theorem \ref{11122017T1} is equivalent to the null-controllability of the equation 
\begin{equation}\label{23042018E3}
\left\{\begin{array}{l}
	\partial_t g(t,x) + q^w(x,D_x)g(t,x)  = v(t,x)\mathbbm{1}_{\omega}(x),\quad (t,x)\in(0,+\infty)\times\mathbb R^n, \\[5pt]
	g(0) = f_0\in L^2(\mathbb R^n).
\end{array}\right.
\end{equation}
Since $B$ and $Q^{\frac12}$ satisfy the Kalman rank condition \eqref{11122017E5}, we notice that $\Ran[B\ \vert\ Q^{\frac12}] = \mathbb R^n$, where $\Ran$ denotes the range, and it follows that
$$\bigcap_{j=0}^{n-1}\Ker(Q^{\frac12}(B^T)^j) = \Ker\big([B\ \vert\ Q^{\frac12}]^T\big) = \big(\Ran[B\ \vert\ Q^{\frac12}]\big)^{\perp} = \{0\},$$
where $\perp$ denotes the orthogonality with respect to the canonical Euclidean structure. Moreover, the equality of kernels $\Ker Q = \Ker Q^{\frac12}$ implies that
\begin{equation}\label{24042018E8}
	\bigcap_{j=0}^{n-1}\Ker(Q(B^T)^j) = \bigcap_{j=0}^{n-1}\Ker(Q^{\frac12}(B^T)^j) = \{0\}.
\end{equation}
According to \eqref{24042018E6} and \eqref{24042018E8}, the singular space of $q$ is given by
\begin{equation}\label{24042018E7}
	S = \Bigg[\bigcap_{j=0}^{n-1}\Ker(RB^j)\Bigg]\times\{0\}.
\end{equation}
We deduce from \eqref{05062019E1} and \eqref{24042018E7} that $S^{\perp} = \mathbb R^n_I\times\mathbb R^n_{\xi}$. The quadratic form $q$ is therefore diffusive. Moreover, it follows from \eqref{20042018E12}, \eqref{13122017E2} and \eqref{24042018E7} that $S\subset\Ker(\Imag F)$. The null-controllability of \eqref{23042018E3} is then a consequence of Theorem \ref{05122017T2}.
\end{proof}

\begin{ex} Let $a,b,c,d,e$ be some real numbers satisfying $b\ne0$ or $c\ne 0$, $d\ne0$ and $e>0$. We consider the three matrices of $M_2(\mathbb R)$ given by
$$Q = \begin{pmatrix}
	a & c \\
	c & b
\end{pmatrix},\quad R = \begin{pmatrix}
	0 & 0 \\
	0 & e
\end{pmatrix}\quad \text{and}\quad B = \begin{pmatrix}
	0 & d \\
	0 & 0
\end{pmatrix}.$$
Since $Q$ is a real symmetric matrix, $Q^2$ is a symmetric positive semidefinite matrix. Moreover, $e>0$ implies that $R$ is also a symmetric positive semidefinite matrix. The generalized Ornstein-Uhlenbeck operator associated to $B$, $Q^2$ and $R$ is given by
$$P = \frac12(a^2+c^2)\partial^2_x + \frac12(b^2+c^2)\partial^2_v + (ac+bc)\partial_x\partial_v - \frac12ev^2 + dv\partial_x,\quad (x,v)\in\mathbb R^2.$$
Notice that when $a=c=0$, $b^2=2$, $d=-1$ and $e=\frac12$, $-P=-\partial^2_v + \frac14v^2+v\partial_x$ is the Kramers-Fokker-Planck operator without external potential. Since $b\ne0$ or $c\ne0$, and $d\ne0$, we deduce that
$$\Rank[B\ \vert\ Q] =\Rank\left[\begin{array}{cccc}
	a & c & dc & db \\[5pt]
	c & b & 0 & 0
\end{array}\right] = 2,$$
that is $B$ and $Q$ satisfy the Kalman rank condition. Moreover, it follows from a straightforward computation that
$$\Ker R\cap\Ker(RB) = \mathbb R\times\{0\},$$
since $RB = 0$ and $e>0$. This implies in particular that $(\Ker R\cap\Ker(RB))^{\perp} = \mathbb R^2_I$, with $I = \{2\}$, the orthogonality being taken with respect to the canonical Euclidean structure of $\mathbb R^2$, and $\Ker R\cap\Ker(RB)\subset\Ker B$. It therefore follows from Theorem \ref{11122017T1} that when $\omega\subset\mathbb R^2$ is a thick subset, the parabolic equation
$$\left\{\begin{array}{l}
	\partial_t f(t,x,v) -Pf(t,x,v)  = u(t,x,v)\mathbbm1_{\omega}(x,v),\quad (t,x,v)\in(0,+\infty)\times\mathbb R^2, \\[5pt]
	f(0) = f_0\in L^2(\mathbb R^2),
\end{array}\right.$$
is null-controllable from $\omega$ in any positive time.
\end{ex}

\section{Appendix}
\label{Appendix}

\subsection{Miscellaneous estimates} The following factorial estimates are instrumental in this work, see for instance (0.3.3), (0.3.6) and (0.3.7) in \cite{MR2668420}:
\begin{equation}\label{26032018E1}
	\forall\alpha\in\mathbb N^n,\quad \vert\alpha\vert!\le n^{\vert\alpha\vert}\ \alpha!,
\end{equation}
\begin{equation}\label{22102017E1}
	\forall \alpha,\beta\in\mathbb N^n,\quad (\alpha+\beta)! \le 2^{\vert\alpha+\beta\vert}\ \alpha!\ \beta!\le 2^{\vert\alpha+\beta\vert}\ (\alpha+\beta)!.
\end{equation}
The inequality \eqref{26032018E1} as well as the left estimate in \eqref{22102017E1} are consequences of the multinomial formula, while the right one is straightforward. Another consequence of the multinomial formula is the following estimate
\begin{equation}\label{20062018E1}
	\forall \alpha\in\mathbb N^n,\quad\sum_{\beta\le\alpha}\binom{\alpha}{\beta} = 2^{\vert\alpha\vert},
\end{equation}
see (0.3.8) in \cite{MR2668420}. On the other hand, we notice that for all $N\in\mathbb N$ and $x\in\mathbb R^n$,
\begin{equation}\label{24112017E3}
	\vert x\vert^N\le n^N\sum_{\vert\alpha\vert = N}\vert x^{\alpha}\vert,
\end{equation}
where $\vert\cdot\vert$ stands for the Euclidean norm. Indeed, it follows from the multinomial theorem that for all $N\in\mathbb N$ and $x\in\mathbb R^n$,
\begin{equation}\label{26032018E2}
	\vert x\vert^N = \bigg(\sum_{j=1}^n\vert x_j\vert^2\bigg)^{\frac N2}\le\bigg(\sum_{j=1}^n\vert x_j\vert\bigg)^N = \sum_{\vert\alpha\vert = N}\frac{N!}{\alpha!}\vert x^{\alpha}\vert,
\end{equation}
since
$$\forall a,b\geq0,\quad (a+b)^{\frac12}\le a^{\frac12} + b^{\frac12}.$$
Yet, we derive from \eqref{26032018E1} that for all $\alpha\in\mathbb N^n$, $\vert\alpha\vert = N$,
$$N! = \vert\alpha\vert!\le n^{\vert\alpha\vert}\ \alpha! =  n^N\ \alpha!,$$
which, combined to \eqref{26032018E2}, leads to the desired estimate:
$$\forall N\in\mathbb N,\forall x\in\mathbb R^n,\quad \vert x\vert^N\le\sum_{\vert\alpha\vert = N}\frac{n^N\alpha!}{\alpha!}\vert x^{\alpha}\vert
=n^N\sum_{\vert\alpha\vert = N}\vert x^{\alpha}\vert.$$
Finally, we get from (0.3.15) and (0.3.16) in \cite{MR2668420} that for all $m\geq1$,
\begin{equation}\label{24042018E9}
	\#\big\{\alpha\in\mathbb N^n,\quad \vert\alpha\vert\le m\big\} = \binom{m+n}m,
\end{equation}
and
\begin{equation}\label{24042018E5}
	\#\big\{\alpha\in\mathbb N^n,\quad\vert\alpha\vert = m\big\} = \binom{m+n-1}m,
\end{equation}
where $\#$ denotes the cardinality.

\subsection{About the Gelfand-Shilov regularity}\label{GSreg} We refer the reader to \cite{MR2668420} (Chapter 6) for an extensive exposition of the Gelfand-Shilov regularity. The Gelfand-Shilov spaces $S^{\mu}_{\nu}(\mathbb R^n)$, with $\mu,\nu>0$, $\mu+\nu\geq1$, are defined as the spaces of smooth functions $f\in\mathscr S(\mathbb R^n)$ satisfying that there exist some positive constants $\varepsilon>0$ and $C>0$ such that
$$\begin{array}{ll}
	\forall x\in\mathbb R^n,\quad & \vert f(x)\vert\le Ce^{-\varepsilon\vert x\vert^{\frac{1}{\nu}}}, \\[7pt]
	\forall\xi\in\mathbb R^n,\quad & \vert \widehat{f}(x)\vert\le Ce^{-\varepsilon\vert x\vert^{\frac{1}{\mu}}}.
\end{array}$$
We recall from \cite{MR2668420} (Theorem 6.1.6) that for $f\in\mathscr S(\mathbb R^n)$, the following conditions are equivalent:
\begin{enumerate}[label=(\roman*),leftmargin=* ,parsep=2pt,itemsep=0pt,topsep=2pt]
\item $f\in S^{\mu}_{\nu}(\mathbb R^n)$,
\item There exists a positive constant $C>1$ such that 
$$\begin{array}{ll}
	\forall x\in\mathbb R^n,\forall\alpha\in\mathbb N^n,\quad & \big\Vert x^{\alpha}f(x)\big\Vert_{L^{\infty}(\mathbb R^n)}\le C^{1+\vert\alpha\vert}\ (\alpha!)^{\nu}, \\[7pt]
	\forall \xi\in\mathbb R^n,\forall\beta\in\mathbb N^n,\quad & \big\Vert\xi^{\beta}\widehat f(\xi)\big\Vert_{L^{\infty}(\mathbb R^n)}\le C^{1+\vert\beta\vert}\ (\beta!)^{\mu}.
\end{array}$$
\item There exists a positive constant $C>1$ such that 
$$\begin{array}{ll}
	\forall x\in\mathbb R^n,\forall\alpha\in\mathbb N^n,\quad & \big\Vert x^{\alpha}f(x)\big\Vert_{L^2(\mathbb R^n)}\le C^{1+\vert\alpha\vert}\ (\alpha!)^{\nu}, \\[7pt]
	\forall \xi\in\mathbb R^n,\forall\beta\in\mathbb N^n,\quad & \big\Vert\partial^{\beta}_xf(x)\big\Vert_{L^2(\mathbb R^n)}\le C^{1+\vert\beta\vert}\ (\beta!)^{\mu}.
\end{array}$$
\item There exists a positive constant $C>1$ such that
$$\forall(\alpha,\beta)\in\mathbb N^{2n},\quad \big\Vert x^{\alpha}\partial^{\beta}_xf(x)\big\Vert_{L^2(\mathbb R^n)}\le C^{1+\vert\alpha\vert+\vert\beta\vert}\ (\alpha!)^{\nu}\ (\beta!)^{\mu},$$
\item There exists a positive constant $C>1$ such that
$$\forall(\alpha,\beta)\in\mathbb N^{2n},\quad \big\Vert x^{\alpha}\partial^{\beta}_xf(x)\big\Vert_{L^{\infty}(\mathbb R^n)}\le C^{1+\vert\alpha\vert+\vert\beta\vert}\ (\alpha!)^{\nu}\ (\beta!)^{\mu}.$$
\end{enumerate}
In the following, we need of to go through the proofs (i) $\Rightarrow$ (iv) $\Rightarrow$ (v) given in \cite{MR2668420} (Proposition 6.1.5 and Theorem 6.1.6) in order to make explicit the dependence of the different constants. For our purpose, we only consider the case when $\mu=\nu=\frac12$.

\begin{lem}\label{13072018L1} We have that for all non-negative integer $p\geq0$ and $c>0$,
$$\big\Vert x^pe^{-cx^2}\big\Vert_{L^2(\mathbb R)}\le \frac{\pi^{\frac14}}{c^{\frac p2+\frac14}}\ (p!)^{\frac12}.$$
\end{lem}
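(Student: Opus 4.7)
The plan is to compute the $L^2$-norm exactly in terms of a Gamma function and then use a standard factorial identity together with a central binomial coefficient estimate to obtain the stated bound.

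First I would write
$$\big\Vert x^pe^{-cx^2}\big\Vert_{L^2(\mathbb R)}^2 = \int_{-\infty}^{+\infty} x^{2p}\, e^{-2cx^2}\, dx,$$
and perform the change of variables $y = \sqrt{2c}\,x$, followed by $u = y^2$, to identify the integral with a Gamma value:
$$\big\Vert x^pe^{-cx^2}\big\Vert_{L^2(\mathbb R)}^2 = \frac{1}{(2c)^{p+\frac12}}\,\Gamma\!\Big(p+\frac12\Big).$$

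Next, I would invoke the classical identity
$$\Gamma\!\Big(p+\frac12\Big) = \frac{(2p)!}{4^p\, p!}\sqrt{\pi},$$
which can be proved by a trivial induction from $\Gamma(1/2)=\sqrt\pi$ and $\Gamma(z+1)=z\Gamma(z)$. Substituting yields
$$\big\Vert x^pe^{-cx^2}\big\Vert_{L^2(\mathbb R)}^2 = \frac{\sqrt{\pi}}{(2c)^{p+\frac12}}\cdot\frac{(2p)!}{4^p\, p!}.$$

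To finish, I would use the central binomial coefficient bound $\binom{2p}{p}\le 4^p$, i.e. $(2p)!\le 4^p (p!)^2$, which gives $(2p)!/(4^p p!)\le p!$. Therefore
$$\big\Vert x^pe^{-cx^2}\big\Vert_{L^2(\mathbb R)}^2 \le \frac{\sqrt{\pi}}{(2c)^{p+\frac12}}\, p!,$$
and taking square roots, together with the trivial inequality $(2c)^{p/2+1/4}\ge c^{p/2+1/4}$ valid for $c>0$ and $p\geq0$, produces the desired estimate
$$\big\Vert x^pe^{-cx^2}\big\Vert_{L^2(\mathbb R)}\le \frac{\pi^{\frac14}}{c^{\frac p2+\frac14}}\,(p!)^{\frac12}.$$

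There is no real obstacle here; the only mild point is to remember (or rederive) the exact Gamma-factorial identity and to check that the absorption of the factor $2^{p/2+1/4}$ into the constant is harmless, both of which are immediate.
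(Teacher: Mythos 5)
Your proof is correct and follows essentially the same route as the paper: both reduce by the substitution $y=\sqrt{2c}\,x$ to the integral $\int |x|^{2p}e^{-x^2}dx$, evaluate it exactly as $\frac{(2p)!}{4^p p!}\sqrt{\pi}$ (you cite it as $\Gamma(p+\tfrac12)$, the paper derives it by integration by parts and induction, which amounts to the same thing), and then apply the central-binomial bound $(2p)!\le 4^p(p!)^2$ together with $2c\ge c$. The two arguments differ only cosmetically.
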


\begin{proof} For all non-negative integer $p\geq0$, we consider the integral
$$I_p = \int_{\mathbb R}\vert x\vert^{2p}e^{-x^2}dx.$$
It follows from an integration by parts that for all $p\geq0$,
\begin{multline*}
	I_{p+1} = \int_{\mathbb R}\vert x\vert^{2p+2}e^{-x^2}dx = 2\int_0^{+\infty}x^{2p+2}e^{-x^2}dx \\
	= \left[-x^{2p+1}e^{-x^2}\right]^{+\infty}_0 + (2p+1)\int_0^{+\infty}x^{2p}e^{-x^2}dx = \frac12(2p+1)I_p.
\end{multline*}
Moreover, we have $I_0 = \int_{\mathbb R}e^{-x^2}dx = \pi^{\frac12}$ and it follows from a straightforward induction that for all non-negative integer $p\geq0$,
\begin{equation}\label{16072018E3}
	I_p = \frac1{4^p}\frac{(2p)!}{p!}\pi^{\frac12}.
\end{equation}
We deduce from \eqref{22102017E1} that for all $p\geq0$,
\begin{equation}\label{16072018E4}
	\frac{(2p)!}{p!}\le \frac{2^{2p}(p!)^2}{p!} = 4^pp!,
\end{equation}
and as a consequence of \eqref{16072018E3} and \eqref{16072018E4}, we notice that
\begin{equation}\label{16072018E2}
	\forall p\geq0,\quad I_p\le \pi^{\frac12} p!.
\end{equation}
Let $p\geq0$ be a non-negative integer and $c>0$. It follows from \eqref{22102017E1}, \eqref{16072018E2} and the substitution rule that
$$\big\Vert x^pe^{-cx^2}\big\Vert^2_{L^2(\mathbb R)} = \int_{\mathbb R}\vert x\vert^{2p}e^{-2cx^2}dx = \frac1{(2c)^{p+\frac12}}\int_{\mathbb R}\vert x\vert^{2p}e^{-x^2}dx \\
\le \frac{\pi^{\frac12}p!}{(2c)^{p+\frac12}} 
\le \frac{\pi^{\frac12}p!}{c^{p+\frac12}}.$$
This ends the proof of Lemma \ref{13072018L1}.
\end{proof}

\begin{prop}\label{28032018P1} There exists a positive constant $C>1$ only depending on the dimension $N$ such that for all positive constants $0<c_1<1$, $0<c_2<1$, $C_1>0$, $C_2>0$ and Schwartz functions $f\in\mathscr S(\mathbb R^N)$ satisfying
\begin{gather}\label{25042018E3}
	\forall x\in\mathbb R^N,\quad \vert f(x)\vert\le C_1e^{-c_1\vert x\vert^2},
\end{gather}
and
\begin{gather}\label{25042018E4}
	\forall\xi\in\mathbb R^N,\quad \vert\widehat f(\xi)\vert\le C_2e^{-c_2\vert\xi\vert^2},
\end{gather}
we have
$$\forall\alpha,\beta\in\mathbb N^N,\quad \big\Vert x^{\alpha}\partial^{\beta}_xf(x)\big\Vert_{L^{\infty}(\mathbb R^N)}\le
C^{1+\vert\alpha\vert+\vert\beta\vert}\ \left[\frac{C_1}{c_1^{\vert\alpha\vert+\frac N4}}\ \frac{C_2}{c_2^{\vert\beta\vert+\lfloor\frac N2\rfloor+1+\frac N4}}\ \alpha!\ \beta!\right]^{\frac12}.$$
\end{prop}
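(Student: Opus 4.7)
The plan is to bound the $L^\infty$ norm by combining the two Gaussian decay hypotheses through a Sobolev-type Cauchy--Schwarz argument, with Lemma \ref{13072018L1} providing the explicit factorial dependence. First I extract pure weighted $L^2$ estimates: from $|f(x)| \le C_1 e^{-c_1|x|^2}$, factoring the integral over $\mathbb R^N$ as a product of one-dimensional integrals and applying Lemma \ref{13072018L1} coordinate-by-coordinate yields
\[
\|x^\alpha f\|_{L^2(\mathbb R^N)}^2 \le C_1^2 \prod_{j=1}^N\|x_j^{\alpha_j}e^{-c_1 x_j^2}\|_{L^2}^2\le \frac{C_1^2\, \pi^{N/2}}{c_1^{|\alpha|+N/2}}\,\alpha!,
\]
and analogously $\|\xi^\beta\hat f\|_{L^2}^2\le C_2^2\,\pi^{N/2}\beta!/c_2^{|\beta|+N/2}$.

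Second, to upgrade to $L^\infty$, I apply the inequality $\|g\|_{L^\infty}\le (2\pi)^{-N/2}\|\hat g\|_{L^1}$ to $g=x^\alpha \partial^\beta_x f$, followed by Cauchy--Schwarz with the weight $(1+|\xi|^2)^{-s/2}$ at $s=\lfloor N/2\rfloor+1$, so that $\|(1+|\xi|^2)^{-s/2}\|_{L^2}$ is a finite constant depending only on $N$. The resulting weighted $L^2$ bound is reduced, via the expansion $(1+|\xi|^2)^s\le C_{N}\sum_{|\mu|\le s}\xi^{2\mu}$ together with Plancherel, to controlling the physical-side norms $\|\partial^\mu_x(x^\alpha\partial^\beta_x f)\|_{L^2}$ for $|\mu|\le s$. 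The Leibniz formula then expands each of these into a finite sum of mixed norms $\|x^{\alpha-\delta}\partial^{\beta+\mu-\delta}_x f\|_{L^2}$ with explicit combinatorial coefficients.

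The heart of the argument is the mixed $L^2$ estimate
\[
\|x^{\alpha'}\partial^{\beta'}_x f\|_{L^2}^2 \le D^{1+|\alpha'|+|\beta'|}\,\frac{C_1\,C_2}{c_1^{|\alpha'|+N/2}\,c_2^{|\beta'|+N/2}}\,\alpha'!\,\beta'!,
\]
which I would derive by using both hypotheses simultaneously in a Cauchy--Schwarz argument: writing $\|x^{\alpha'}\partial^{\beta'}_x f\|_{L^2}^2$ as $\int|x^{\alpha'}|^2|\partial^{\beta'}_xf|^2\,dx$, I bound $|\partial^{\beta'}_xf(x)|$ pointwise through Fourier inversion with the Gaussian weight $e^{c_2|\xi|^2/2}$ (which contributes the factors $C_2$ and $c_2^{-(|\beta'|+N/2)}$), while absorbing the $|x^{\alpha'}|^2$ factor against $f$ using the Step~1 bound (contributing $C_1$ and $c_1^{-(|\alpha'|+N/2)}$); the square-root structure of the statement comes from this pairing. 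Assembling everything, collecting factorial combinatorics via \eqref{22102017E1} and \eqref{20062018E1}, and absorbing all $N$-dependent constants into a single $C>1$ yields the claim; the extra $\lfloor N/2\rfloor+1$ appearing in the exponent of $c_2^{-1}$ precisely records the $s$ additional derivatives introduced by the Sobolev step. The main obstacle is this mixed estimate: treating the two hypotheses separately would produce a $C_1^2+C_2^2$-type dependence instead of the required $C_1 C_2$ geometric mean, so a carefully chosen simultaneous Cauchy--Schwarz is essential.
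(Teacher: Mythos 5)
Your Steps 1 and 2 match the paper: the factor--by--factor reduction to Lemma \ref{13072018L1} yielding bounds for $\Vert x^{\alpha}f\Vert_{L^2}$ and $\Vert\partial^{\beta}_x f\Vert_{L^2}$, and the Fourier--side Sobolev embedding with $s=\lfloor N/2\rfloor+1$ followed by Leibniz, are exactly what the paper does (the paper just invokes Sobolev embeddings and Leibniz directly). The problem is the mixed $L^2$ estimate you single out as the heart of the argument.

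You propose to write
\[
\Vert x^{\alpha'}\partial^{\beta'}_xf\Vert_{L^2}^2=\int|x^{\alpha'}|^2|\partial^{\beta'}_xf(x)|^2\,dx,
\]
bound $|\partial^{\beta'}_xf(x)|$ pointwise via Fourier inversion (which only gives a \emph{uniform} constant in $x$), and then "absorb $|x^{\alpha'}|^2$ against $f$" using the weighted $L^2$ bound on $f$. This does not close. A uniform bound on $\partial^{\beta'}_xf$ is useless in that integral, since $\int|x^{\alpha'}|^2\,dx$ diverges; and you cannot swap the derivative for $f$ inside the integrand, because the hypotheses \eqref{25042018E3}--\eqref{25042018E4} control the Gaussian decay of $f$ and $\widehat f$, not of $\partial^{\beta'}_xf$. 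Indeed, Gaussian decay of $\partial^{\beta'}_xf$ in $x$ is precisely part of what the proposition is establishing, so assuming it here would be circular. What is actually missing is the integration--by--parts pairing used in the paper:
\[
\Vert x^{\alpha}\partial^{\beta}_xf\Vert_{L^2}^2=(-1)^{|\beta|}\big\langle f,\;\partial^{\beta}_x\big(x^{2\alpha}\partial^{\beta}_xf\big)\big\rangle_{L^2},
\]
followed by Leibniz and one Cauchy--Schwarz per term. This produces pairings $\Vert x^{2\alpha-\gamma}f\Vert_{L^2}\Vert\partial^{2\beta-\gamma}_xf\Vert_{L^2}$, each directly controlled by your Step~1 bounds, and the $(2\alpha)!\le 4^{|\alpha|}(\alpha!)^2$ identity (from \eqref{22102017E1}) supplies the square--root factorial structure. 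Your observation that a "simultaneous Cauchy--Schwarz" yielding the geometric mean $C_1C_2$ is essential is correct, but the mechanism you sketch does not realize it; the integration--by--parts identity is the missing ingredient.
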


\begin{proof} Let $f\in\mathscr S(\mathbb R^N)$ be a Schwartz function satisfying \eqref{25042018E3} and \eqref{25042018E4}. We first deduce from \eqref{25042018E3}, Lemma \ref{13072018L1} and the Fubini theorem that for all $\alpha\in\mathbb N^N$,
\begin{multline}\label{13072018E3}
	\big\Vert x^{\alpha}f(x)\big\Vert_{L^2(\mathbb R^N)}\le C_1\big\Vert x^{\alpha}e^{-c_1\vert x\vert^2}\big\Vert_{L^2(\mathbb R^N)} 
	= C_1\prod_{j=1}^N\big\Vert x_j^{\alpha_j}e^{-c_1 x_j^2}\big\Vert_{L^2(\mathbb R)} \\
	\le C_1\prod_{j=1}^N\frac{\pi^{\frac14}}{c_1^{\frac{\alpha_j}2+\frac14}}\ (\alpha_j!)^{\frac12}
	= \pi^{\frac N4}\ \frac{C_1}{c_1^{\frac{\vert\alpha\vert}2+\frac N4}}\ (\alpha!)^{\frac12}.
\end{multline}
Similarly, we deduce from \eqref{25042018E4}, Lemma \ref{13072018L1}, the Plancherel theorem and the Fubini theorem that for all $\beta\in\mathbb N^N$,
\begin{equation}\label{16072018E1}
	\big\Vert\partial^{\beta}_xf(x)\big\Vert_{L^2(\mathbb R^N)} = \frac1{(2\pi)^{\frac N2}}\big\Vert\xi^{\beta}\widehat f(\xi)\big\Vert_{L^2(\mathbb R^N)}
	\le \frac{\pi^{\frac N4}}{(2\pi)^{\frac N2}}\ \frac{C_2}{c_2^{\frac{\vert\beta\vert}2+\frac N4}}\ (\beta!)^{\frac12}.
\end{equation}
Let $\alpha,\beta\in\mathbb N^N$. With an integration par parts, we notice that
$$\big\Vert x^{\alpha}\partial^{\beta}_xf(x)\big\Vert^2_{L^2(\mathbb R^N)} = \langle\partial^{\beta}_xf(x),x^{2\alpha}\partial^{\beta}_xf(x)\rangle_{L^2(\mathbb R^N)}
= (-1)^{\vert\beta\vert}\langle f(x),\partial^{\beta}_x(x^{2\alpha}\partial^{\beta}_xf(x))\rangle_{L^2(\mathbb R^N)},$$
while it follows from the Leibniz formula that for all $x\in\mathbb R^N$,
\begin{align*}
	\partial^{\beta}_x(x^{2\alpha}\partial^{\beta}_xf(x)) 
	& = \sum_{\gamma\le\beta}\binom{\beta}{\gamma}\ \partial^{\gamma}_x(x^{2\alpha})\ \partial^{\beta-\gamma}_x(\partial^{\beta}_xf(x)) \\[5pt]
	& = \sum_{\gamma\le\beta,\ \gamma\le 2\alpha}\binom{\beta}{\gamma}\frac{(2\alpha)!}{(2\alpha-\gamma)!}\ x^{2\alpha-\gamma}\ \partial^{2\beta-\gamma}_xf(x) \\[5pt]
	& = \sum_{\gamma\le\beta,\ \gamma\le 2\alpha}\binom{\beta}{\gamma}\binom{2\alpha}{\gamma}\gamma!\ x^{2\alpha-\gamma}\ \partial^{2\beta-\gamma}_xf(x).
\end{align*}
We therefore deduce from the Cauchy-Schwarz inequality and \eqref{20062018E1} that
\begin{multline}\label{25042018E7}
	\big\Vert x^{\alpha}\partial^{\beta}_xf(x)\big\Vert^2_{L^2(\mathbb R^N)}
	\le\sum_{\gamma\le\beta,\ \gamma\le 2\alpha}\binom{\beta}{\gamma}\binom{2\alpha}{\gamma}\ \gamma!\ \vert\langle x^{2\alpha-\gamma}f(x),\partial^{2\beta-\gamma}_xf(x)\rangle_{L^2(\mathbb R^N)}\vert \\[5pt]
	\le 2^{2\vert\alpha\vert + \vert\beta\vert}\sum_{\gamma\le\beta,\ \gamma\le 2\alpha}\gamma!\ \big\Vert x^{2\alpha-\gamma}f(x)\big\Vert_{L^2(\mathbb R^N)}\big\Vert\partial^{2\beta-\gamma}_xf(x)\big\Vert_{L^2(\mathbb R^N)}.
\end{multline}
It follows from \eqref{13072018E3} and \eqref{16072018E1} that
\begin{multline}\label{25042018E8}
	\gamma!\ \big\Vert x^{2\alpha-\gamma}f(x)\big\Vert_{L^2(\mathbb R^N)}\big\Vert\partial^{2\beta-\gamma}_xf(x)\big\Vert_{L^2(\mathbb R^N)} \\[5pt]
	\le\frac{\pi^{\frac N2}}{(2\pi)^{\frac N2}}\ \frac{C_1}{c_1^{\frac{\vert2\alpha-\gamma\vert}{2}+\frac N4}}\ \frac{C_2}{c_2^{\frac{\vert2\beta-\gamma\vert}{2}+\frac N4}}\
	\gamma!\ ((2\alpha-\gamma)!)^{\frac12}\ ((2\beta-\gamma)!)^{\frac12}.
\end{multline}
Moreover, the estimate \eqref{22102017E1} implies that
\begin{multline}\label{25042018E9}
	\gamma!\ ((2\alpha-\gamma)!)^{\frac12}\ ((2\beta-\gamma)!)^{\frac12} = \left[\gamma!\ (2\alpha-\gamma)!\ \gamma!\ (2\beta-\gamma)!\right]^{\frac12} \\[5pt]
	\le\left[(2\alpha)!\ (2\beta)!\right]^{\frac12}
	\le\left[4^{\vert\alpha\vert+\vert\beta\vert}\ (\alpha!)^2\ (\beta!)^2\right]^{\frac12} = 2^{\vert\alpha\vert+\vert\beta\vert}\ \alpha!\ \beta!.
\end{multline}
We therefore obtain from \eqref{25042018E7}, \eqref{25042018E8} and \eqref{25042018E9} that
$$\big\Vert x^{\alpha}\partial^{\beta}_xf(x)\big\Vert^2_{L^2(\mathbb R^N)}
\le \frac1{2^{\frac N2}}\ \left[\sum_{\gamma\le\beta,\ \gamma\le 2\alpha}1\right]\ 2^{3\vert\alpha\vert + 2\vert\beta\vert}\ 
\frac{C_1}{c_1^{\vert\alpha\vert+\frac N4}}\ \frac{C_2}{c_2^{\vert\beta\vert+\frac N4}}\ \alpha!\ \beta!,$$
since $0<c_1,c_2<1$. It follows from \eqref{20062018E1} that the sum satisfies the following estimate
$$\sum_{\gamma\le\beta,\ \gamma\le 2\alpha}1\le \sum_{\gamma\le\beta}1\le\sum_{\gamma\le\beta}\binom{\beta}{\gamma} = 2^{\vert\beta\vert},$$
and as a consequence, we deduce that for all $\alpha,\beta\in\mathbb N^N$,
\begin{equation}\label{25042018E10}
	\big\Vert x^{\alpha}\partial^{\beta}_xf(x)\big\Vert^2_{L^2(\mathbb R^N)}
	\le 2^{3\vert\alpha\vert+3\vert\beta\vert-\frac N2}\
	\frac{C_1}{c_1^{\vert\alpha\vert+\frac N4}}\ \frac{C_2}{c_2^{\vert\beta\vert+\frac N4}}\ \alpha!\ \beta!.
\end{equation}
Then, the Sobolev embeddings give the existence of a positive constant $C>0$ only depending on the dimension $N$ such that for all $\alpha,\beta\in\mathbb N^N$,
$$\big\Vert x^{\alpha}\partial^{\beta}_xf(x)\big\Vert_{L^{\infty}(\mathbb R^N)}\le C\sum_{\vert\gamma\vert\le s}\big\Vert\partial^{\gamma}_x(x^{\alpha}\partial^{\beta}_xf(x))\big\Vert_{L^2(\mathbb R^N)},$$
where $s=\lfloor N/2\rfloor+1$. By using anew the Leibniz formula, we obtain that
\begin{gather}\label{25042018E11}
	\big\Vert x^{\alpha}\partial^{\beta}_xf(x)\big\Vert_{L^{\infty}(\mathbb R^N)}\le C\sum_{\vert\gamma\vert\le s}\sum_{\delta\le\gamma,\ \delta\le\alpha}\binom{\gamma}{\delta}\binom{\alpha}{\delta}\ \delta!\ \big\Vert x^{\alpha-\delta}\partial^{\beta+\gamma-\delta}_xf(x)\big\Vert_{L^2(\mathbb R^N)}.
\end{gather}
Let $\alpha,\beta\in\mathbb N^N$ and $\gamma,\delta\in\mathbb N^N$ such that $\vert\gamma\vert\le s$, $\delta\le\gamma$ and $\delta\le\alpha$. On the one hand, we deduce from \eqref{22102017E1} and \eqref{20062018E1} the following estimate:
\begin{equation}\label{20062018E2}
	\binom{\gamma}{\delta}\binom{\alpha}{\delta}\ \delta!\le 2^{\vert\gamma\vert+\vert\alpha\vert}\gamma!\le 2^s\ s!\ 2^{\vert\alpha\vert}.
\end{equation}
On the other hand, it follows from \eqref{22102017E1} and \eqref{25042018E10} that
\begin{multline}\label{25042018E12}
	\big\Vert x^{\alpha-\delta}\partial^{\beta+\gamma-\delta}_xf(x)\big\Vert_{L^2(\mathbb R^N)} \\[5pt]
	\le \left[2^{3\vert\alpha-\delta\vert+3\vert\beta+\gamma-\delta\vert-\frac N2}\
	\frac{C_1}{c_1^{\vert\alpha-\delta\vert+\frac N4}}\ \frac{C_2}{c_2^{\vert\beta+\gamma-\delta\vert+\frac N4}}\ (\alpha-\delta)!\ (\beta+\gamma-\delta)!\right]^{\frac12},
\end{multline}
with
$$(\alpha-\delta)!\le \alpha!\quad \text{and}\quad (\beta+\gamma-\delta)!\le (\beta+\gamma)!\le 2^{\vert\beta\vert+\vert\gamma\vert}\ \beta!\ \gamma!
\le 2^{\vert\beta\vert + s}\ s!\ \beta!.$$
Moreover, we deduce from \eqref{20062018E1} and \eqref{24042018E9} that
\begin{equation}\label{25042018E13}
	\sum_{\vert\gamma\vert\le s}\sum_{\delta\le\gamma,\ \delta\le\alpha}1
	\le \sum_{\vert\gamma\vert\le s}\sum_{\delta\le\gamma}1
	\le \sum_{\vert\gamma\vert\le s}\sum_{\delta\le\gamma}\binom{\gamma}{\delta} \\[5pt]
	\le \sum_{\vert\gamma\vert\le s}2^{\vert\gamma\vert}
	\le 2^s\binom{s+N}s
	\le 2^{2s + N}.
\end{equation}
Finally, we deduce from \eqref{25042018E11}, \eqref{20062018E2}, \eqref{25042018E12} and \eqref{25042018E13} that for all $\alpha,\beta\in\mathbb N^N$,
$$\big\Vert x^{\alpha}\partial^{\beta}_xf(x)\big\Vert_{L^{\infty}(\mathbb R^N)} \le 
C\ 2^{2s+N}\ 2^s\ s!\ 2^{\vert\alpha\vert}
\left[2^{3\vert\alpha\vert+3\vert\beta\vert + 3s -\frac N2}\ \frac{C_1}{c_1^{\vert\alpha\vert+\frac N4}}\ \frac{C_2}{c_2^{\vert\beta\vert+s+\frac N4}}\ 2^{\vert\beta\vert+s}\ s!\ \alpha!\ \beta!\right]^{\frac12},$$
that is
$$\big\Vert x^{\alpha}\partial^{\beta}_xf(x)\big\Vert_{L^{\infty}(\mathbb R^N)}
\le C\ (s!)^{\frac32}\ 2^{\frac52\vert\alpha\vert+\frac32\vert\beta\vert+\frac92s+\frac34N}
\left[\frac{C_1}{c_1^{\vert\alpha\vert+\frac N4}}\ \frac{C_2}{c_2^{\vert\beta\vert+ s +\frac N4}}\ \alpha!\ \beta!\right]^{\frac12}.$$
This ends the proof of Proposition \ref{28032018P1}.
\end{proof}

\nocite{*}
\bibliographystyle{siam}
\bibliography{BiblioControlabilite}

\begin{thebibliography}{10}

\bibitem{AB}
{\sc P.~Alphonse and J.~Bernier}, {\em Smoothing properties of fractional
  {O}rnstein-{U}hlenbeck semigroups and null-controllability}, preprint,
  arXiv:1810.02629v1,  (2018).

\bibitem{BEPS}
{\sc K.~Beauchard, M.~Egidi, and K.~Pravda-Starov}, {\em Geometric conditions
  for the null-controllability of hypoelliptic quadratic parabolic equations
  with moving control supports}, preprint, arXiv:1908.10603v1,  (2019).

\bibitem{BJKPS}
{\sc K.~Beauchard, P.~Jaming, and K.~Pravda-Starov}, {\em Spectral inequality
  for finite combinations of {H}ermite functions and null-controllability of
  hypoelliptic quadratic equations}, preprint, arXiv:1804.04895v1,  (2018).

\bibitem{MR3732691}
{\sc K.~Beauchard and K.~Pravda-Starov}, {\em Null-controllability of
  hypoelliptic quadratic differential equations}, J. \'Ec. polytech. Math., 5
  (2018), pp.~1--43.

\bibitem{MR1696697}
{\sc A.~Boulkhemair}, {\em {$L^2$} estimates for {W}eyl quantization}, J.
  Funct. Anal., 165 (1999), pp.~173--204.

\bibitem{MR2729292}
{\sc M.~Bramanti, G.~Cupini, E.~Lanconelli, and E.~Priola}, {\em Global {$L^p$}
  estimates for degenerate {O}rnstein-{U}hlenbeck operators}, Math. Z., 266
  (2010), pp.~789--816.

\bibitem{MR3649471}
{\sc E.~Carypis and P.~Wahlberg}, {\em Propagation of exponential phase space
  singularities for {S}chr\"odinger equations with quadratic {H}amiltonians},
  J. Fourier Anal. Appl., 23 (2017), pp.~530--571.

\bibitem{MR2302744}
{\sc J.-M. Coron}, {\em Control and nonlinearity}, vol.~136 of Mathematical
  Surveys and Monographs, American Mathematical Society, Providence, RI, 2007.

\bibitem{MR3816981}
{\sc M.~Egidi and I.~Veseli\'c}, {\em Sharp geometric condition for
  null-controllability of the heat equation on {$\Bbb {R}^d$} and consistent
  estimates on the control cost}, Arch. Math. (Basel), 111 (2018), pp.~85--99.

\bibitem{MR2505366}
{\sc B.~Farkas and L.~Lorenzi}, {\em On a class of hypoelliptic operators with
  unbounded coefficients in {$\Bbb R^N$}}, Commun. Pure Appl. Anal., 8 (2009),
  pp.~1159--1201.

\bibitem{MR2257846}
{\sc B.~Farkas and A.~Lunardi}, {\em Maximal regularity for {K}olmogorov
  operators in {$L^2$} spaces with respect to invariant measures}, J. Math.
  Pures Appl. (9), 86 (2006), pp.~310--321.

\bibitem{MR2507625}
{\sc M.~Hitrik and K.~Pravda-Starov}, {\em Spectra and semigroup smoothing for
  non-elliptic quadratic operators}, Math. Ann., 344 (2009), pp.~801--846.

\bibitem{MR2753626}
\leavevmode\vrule height 2pt depth -1.6pt width 23pt, {\em Semiclassical
  hypoelliptic estimates for non-selfadjoint operators with double
  characteristics}, Comm. Partial Differential Equations, 35 (2010),
  pp.~988--1028.

\bibitem{MR3137478}
\leavevmode\vrule height 2pt depth -1.6pt width 23pt, {\em Eigenvalues and
  subelliptic estimates for non-selfadjoint semiclassical operators with double
  characteristics}, Ann. Inst. Fourier (Grenoble), 63 (2013), pp.~985--1032.

\bibitem{MR3710672}
{\sc M.~Hitrik, K.~Pravda-Starov, and J.~Viola}, {\em Short-time asymptotics of
  the regularizing effect for semigroups generated by quadratic operators},
  Bull. Sci. Math., 141 (2017), pp.~615--675.

\bibitem{MR3841852}
\leavevmode\vrule height 2pt depth -1.6pt width 23pt, {\em From semigroups to
  subelliptic estimates for quadratic operators}, Trans. Amer. Math. Soc., 370
  (2018), pp.~7391--7415.

\bibitem{MR1339714}
{\sc L.~H\"ormander}, {\em Symplectic classification of quadratic forms, and
  general {M}ehler formulas}, Math. Z., 219 (1995), pp.~413--449.

\bibitem{MR1996773}
\leavevmode\vrule height 2pt depth -1.6pt width 23pt, {\em The analysis of
  linear partial differential operators. {I}}, Classics in Mathematics,
  Springer-Verlag, Berlin, 2003.
\newblock Distribution theory and Fourier analysis, Reprint of the second
  (1990) edition [Springer, Berlin; MR1065993 (91m:35001a)].

\bibitem{MR2304165}
\leavevmode\vrule height 2pt depth -1.6pt width 23pt, {\em The analysis of
  linear partial differential operators. {III}}, Classics in Mathematics,
  Springer, Berlin, 2007.
\newblock Pseudo-differential operators, Reprint of the 1994 edition.

\bibitem{MR1503147}
{\sc A.~Kolmogoroff}, {\em Zuf\"allige {B}ewegungen (zur {T}heorie der
  {B}rownschen {B}ewegung)}, Ann. of Math. (2), 35 (1934), pp.~116--117.

\bibitem{MR1840110}
{\sc O.~Kovrijkine}, {\em Some results related to the {L}ogvinenko-{S}ereda
  theorem}, Proc. Amer. Math. Soc., 129 (2001), pp.~3037--3047.

\bibitem{MR1289901}
{\sc E.~Lanconelli and S.~Polidoro}, {\em On a class of hypoelliptic evolution
  operators}, Partial differential equations, II (Turin, 1993), Rend. Sem. Mat.
  Univ. Politec. Torino, 52 (1994), pp.~29--63.

\bibitem{MR2599384}
{\sc N.~Lerner}, {\em Metrics on the phase space and non-selfadjoint
  pseudo-differential operators}, vol.~3 of Pseudo-Differential Operators.
  Theory and Applications, Birkh\"auser Verlag, Basel, 2010.

\bibitem{MR2313847}
{\sc L.~Lorenzi and M.~Bertoldi}, {\em Analytical methods for {M}arkov
  semigroups}, vol.~283 of Pure and Applied Mathematics (Boca Raton), Chapman
  \& Hall/CRC, Boca Raton, FL, 2007.

\bibitem{MR1475774}
{\sc A.~Lunardi}, {\em Schauder estimates for a class of degenerate elliptic
  and parabolic operators with unbounded coefficients in {${\bf R}^n$}}, Ann.
  Scuola Norm. Sup. Pisa Cl. Sci. (4), 24 (1997), pp.~133--164.

\bibitem{MR1941990}
{\sc G.~Metafune, D.~Pallara, and E.~Priola}, {\em Spectrum of
  {O}rnstein-{U}hlenbeck operators in {$L^p$} spaces with respect to invariant
  measures}, J. Funct. Anal., 196 (2002), pp.~40--60.

\bibitem{MR2668420}
{\sc F.~Nicola and L.~Rodino}, {\em Global pseudo-differential calculus on
  {E}uclidean spaces}, vol.~4 of Pseudo-Differential Operators. Theory and
  Applications, Birkh\"auser Verlag, Basel, 2010.

\bibitem{MR3342487}
{\sc M.~Ottobre, G.~A. Pavliotis, and K.~Pravda-Starov}, {\em Some remarks on
  degenerate hypoelliptic {O}rnstein-{U}hlenbeck operators}, J. Math. Anal.
  Appl., 429 (2015), pp.~676--712.

\bibitem{MR2752935}
{\sc K.~Pravda-Starov}, {\em Subelliptic estimates for quadratic differential
  operators}, Amer. J. Math., 133 (2011), pp.~39--89.

\bibitem{MR3880300}
\leavevmode\vrule height 2pt depth -1.6pt width 23pt, {\em Generalized {M}ehler
  formula for time-dependent non-selfadjoint quadratic operators and
  propagation of singularities}, Math. Ann., 372 (2018), pp.~1335--1382.

\bibitem{MR3756858}
{\sc K.~Pravda-Starov, L.~Rodino, and P.~Wahlberg}, {\em Propagation of {G}abor
  singularities for {S}chr\"odinger equations with quadratic {H}amiltonians},
  Math. Nachr., 291 (2018), pp.~128--159.

\bibitem{MR2664715}
{\sc J.~Sj\"ostrand}, {\em Resolvent estimates for non-selfadjoint operators
  via semigroups}, in Around the research of {V}ladimir {M}az'ya. {III},
  vol.~13 of Int. Math. Ser. (N. Y.), Springer, New York, 2010, pp.~359--384.

\bibitem{MR3118871}
{\sc J.~Viola}, {\em Non-elliptic quadratic forms and semiclassical estimates
  for non-selfadjoint operators}, Int. Math. Res. Not. IMRN,  (2013),
  pp.~4615--4671.

\bibitem{MR3244980}
\leavevmode\vrule height 2pt depth -1.6pt width 23pt, {\em Spectral projections
  and resolvent bounds for partially elliptic quadratic differential
  operators}, J. Pseudo-Differ. Oper. Appl., 4 (2013), pp.~145--221.

\bibitem{MR3767155}
{\sc P.~Wahlberg}, {\em Propagation of polynomial phase space singularities for
  {S}chr\"odinger equations with quadratic {H}amiltonians}, Math. Scand., 122
  (2018), pp.~107--140.

\bibitem{MR3950015}
{\sc G.~Wang, M.~Wang, C.~Zhang, and Y.~Zhang}, {\em Observable set,
  observability, interpolation inequality and spectral inequality for the heat
  equation in {$\Bbb R^n$}}, J. Math. Pures Appl. (9), 126 (2019),
  pp.~144--194.

\end{thebibliography}

\end{document}